\theoremstyle{definition}
\newtheorem{thm}{Theorem}
\newtheorem{lem}[thm]{Lemma}
\newtheorem*{lem*}{Lemma}
\newtheorem*{thm*}{Theorem}
\newtheorem{prop}[thm]{Proposition}
\newtheorem{cor}[thm]{Corollary}
\newtheorem{defn}[thm]{Definition}
\newtheorem*{remark*}{Remark}
\newtheorem{remark}{Remark}
\newtheorem*{example}{Example}
\newtheorem{conjecture}{Conjecture}
\newtheorem{cor/defn}[thm]{Corollary/Definition}
\newcommand{\y}{{ \mathscr{Y} }}
\DeclareMathOperator{\sP}{\mathscr{P} }
\DeclareMathOperator{\WP}{WP}
\DeclareMathOperator{\area}{area}
\DeclareMathOperator{\dinv}{dinv}
\DeclareMathOperator{\mathleft}{left}
\DeclareMathOperator{\mathright}{right}
\DeclareMathOperator{\arm}{arm}
\DeclareMathOperator{\leg}{leg}
\DeclareMathOperator{\Des}{Des}
\DeclareMathOperator{\maj}{maj}
\DeclareMathOperator{\Inv}{Inv}
\DeclareMathOperator{\inv}{inv}
\DeclareMathOperator{\coinv}{coinv}
\DeclareMathOperator{\mathleg}{lg}
\DeclareMathOperator{\sort}{sort}
\DeclareMathOperator{\Par}{\mathrm{Par}}
\DeclareMathOperator{\Comp}{\mathrm{Comp}}
\DeclareMathOperator{\Compred}{\mathrm{Comp}^{\textit{red}}}
\begin{document}

\title{Stable-Limit Non-symmetric Macdonald Functions}
\author{Milo James Bechtloff Weising}
\date{\today}

\maketitle

\abstract{We construct and study an explicit simultaneous $\mathscr{Y}$-eigenbasis of Ion and Wu's standard representation of the $^+$stable-limit double affine Hecke algebra for the limit Cherednik operators $\mathscr{Y}_i$. This basis arises as a generalization of Cherednik's non-symmetric Macdonald polynomials of type $GL$. We utilize links between $^+$stable-limit double affine Hecke algebra theory of Ion-Wu and the double Dyck path algebra of Carlsson-Mellit that arose in their proof of the Shuffle Conjecture. As a consequence, the spectral theory for the limit Cherednik operators is understood. The symmetric functions comprise the zero weight space. We introduce one extra operator that commutes with the $\y_i$ action and dramatically refines the weight spaces to now be one-dimensional. This operator, up to a change of variables, gives an extension of Haiman's operator $\Delta'$ from $\Lambda$ to $\sP_{as}^{+}.$
Additionally, we develop another method to build this weight basis using limits of trivial idempotents.}

\tableofcontents

\section{Introduction}\label{intro section}
The Shuffle Conjecture \cite{HHLRU-2003}, now the Shuffle Theorem \cite{CM_2015}, is a combinatorial statement regarding the Frobenius character, $\mathcal{F}_{R_n}$, of the diagonal coinvariant algebra $R_n$ which generalizes the coinvariant algebra arising from the geometry of flag varieties. 
The conjecture built on the work of many people during the 1990s, including but not limited to Bergeron, Garsia, Haiman,  and Tesler \cite{BGSciFi} \cite{GHCat} \cite{BGHT}.
The following explicit formula is due to Haiman \cite{HVanish}
    $$ \mathcal{F}_{R_n}(X;q,t) = (-1)^n \nabla e_n[X] $$
    where the operator $\nabla$ is a diagonalizable operator on symmetric functions prescribed by its action on the modified Macdonald symmetric functions  $\widetilde{H}_{\mu}$  as
    $$\nabla \widetilde{H}_{\mu} = \widetilde{H}_{\mu}[-1]\cdot  \widetilde{H}_{\mu} .$$
    The original conjecture of Haglund, Haiman, Loehr, Remmel, and Ulyanov \cite{HHLRU-2003} states the following:
        
\begin{thm}[Shuffle Theorem]\cite{CM_2015}

$$ (-1)^n \nabla e_n[X] = \sum_{\pi}\sum_{ w \in \WP_{\pi}} t^{\area(\pi)} q^{\dinv(\pi,w)} x_{w}. $$

\end{thm}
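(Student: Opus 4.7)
The plan is to follow the Carlsson--Mellit strategy, which routes the proof through the double Dyck path algebra $\mathbb{A}_{q,t}$ and the compositional refinement of Haglund--Morse--Zabrocki rather than attacking the symmetric-function identity head-on. First I would reduce to the stronger compositional statement: for each composition $\alpha = (\alpha_1,\ldots,\alpha_\ell) \models n$,
\[
C_{\alpha_1} C_{\alpha_2} \cdots C_{\alpha_\ell}(1) \;=\; \sum_{\substack{\pi \\ \mathrm{touch}(\pi) = \alpha}} \sum_{w \in \WP_\pi} t^{\area(\pi)} q^{\dinv(\pi,w)} x_w,
\]
where the $C_j$ are the compositional plethystic creation operators of Haglund--Morse--Zabrocki, normalized so that $\sum_{\alpha \models n} C_\alpha(1) = (-1)^n\nabla e_n$. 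Summing over $\alpha \models n$ then recovers the theorem, so it suffices to prove this refined identity.

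Second, I would build a representation of $\mathbb{A}_{q,t}$ on a graded space $V_\bullet = \bigoplus_{k \ge 0} V_k$, where $V_k$ is an explicit polynomial module in $k$ variables tensored with symmetric functions in an auxiliary alphabet. The Hecke generators $T_i$ act by Demazure--Lusztig operators, while the raising and lowering generators $d_+ \colon V_k \to V_{k+1}$ and $d_- \colon V_{k+1} \to V_k$ are defined by explicit plethystic formulas. In this representation each $C_j$ lifts to a specific word in $d_+, d_-, T_i$, so that $C_{\alpha_1}\cdots C_{\alpha_\ell}(1)$ is produced concretely by applying these words to the vacuum $1 \in V_0$ and extracting the symmetric-function component.

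The combinatorial half of the argument consists in showing that the parking-function generating function $F_\alpha$ on the right-hand side satisfies the same recursion as $C_\alpha(1)$. This comes from the touch-point peeling of Dyck paths: removing the first excursion back to the diagonal splits $(\pi, w)$ into two smaller parking functions, and the $\area$ and $\dinv$ statistics transform precisely in accordance with the relations of $\mathbb{A}_{q,t}$. Because both $F_\alpha$ and $C_\alpha(1)$ satisfy the same recursion with the same base case, they must agree.

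The main obstacle, and the technical heart of the argument, is verifying that the explicit polynomial operators on $V_\bullet$ really do satisfy the defining relations of $\mathbb{A}_{q,t}$, in particular the mixed identity controlling $d_- d_+ - q\, d_+ d_-$ in terms of an auxiliary multiplication operator; this requires a delicate plethystic calculation. Given the connection between Ion--Wu's ${}^+$stable-limit DAHA and the double Dyck path algebra highlighted in the abstract, I would try to derive these relations from the stable-limit DAHA side, where the $\mathscr{Y}$-operators and their eigenbasis constructed in this paper may afford a cleaner verification than the direct polynomial computation originally used by Carlsson and Mellit.
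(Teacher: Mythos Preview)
The paper does not give a proof of this statement at all: the Shuffle Theorem is quoted in the introduction purely as motivation and attributed to \cite{CM_2015}, with no argument supplied. There is therefore nothing in the paper to compare your proposal against. Your outline is a reasonable high-level sketch of the Carlsson--Mellit strategy, though note a slip in the compositional refinement: the left-hand side should be $\nabla C_{\alpha_1}\cdots C_{\alpha_\ell}(1)$, not $C_{\alpha_1}\cdots C_{\alpha_\ell}(1)$; it is $\sum_{\alpha\models n} C_\alpha(1)$ that equals $(-1)^n e_n$, and one then applies $\nabla$ to both sides.

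Your final paragraph, however, inverts the logical flow of the present paper. The results here do not furnish an alternative verification of the $\mathbb{A}_{q,t}$ relations; rather, the paper \emph{assumes} the Carlsson--Mellit representation of $\mathbb{A}_{t,q}$ (and Ion--Wu's identification of it with the $\mathscr{H}^{+}$-module $\mathscr{P}_{as}^{+}$) as established input, and then uses the operators $d_{-}$ (equivalently $\partial_{-}^{(k)}$) from that representation to construct the $\mathscr{Y}$-eigenbasis $\widetilde{E}_{(\mu|\lambda)}$. Nothing in the construction of the stable-limit Macdonald functions or the operator $\Psi_{p_1}$ feeds back into checking the mixed relation $d_{-}d_{+}-q\,d_{+}d_{-}$ or any other defining relation of $\mathbb{A}_{q,t}$. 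So the suggestion that the eigenbasis built here ``may afford a cleaner verification'' of those relations is not supported by anything in the paper and would be circular as stated.
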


In the above, $\pi$ ranges over the set of Dyck paths of length $n$ and $\WP_{\pi}$ is the set of word parking functions corresponding to $\pi$. The values $\area(\pi)$ and $\dinv(\pi,w)$ are certain statistics corresponding to $\pi$ and $w \in \WP_{\pi}$.

In \cite{CM_2015}, Carlsson and Mellit prove the Compositional Shuffle Conjecture of Haglund, Morse, and Zabrocki \cite{HMZ}, a generalization of the original Shuffle Conjecture. Carlsson and Mellit construct and investigate a quiver path algebra called the Double Dyck Path algebra $\mathbb{A}_{q,t}$. They construct a representation of $\mathbb{A}_{q,t}$, called the standard representation, built on certain mixed symmetric and non-symmetric polynomial algebras with actions from Demazure-Lusztig operators, Hall-Littlewood creation operators, and plethysms. The Compositional Shuffle Theorem 
falls out after a rich understanding of the standard representation is developed. Later analysis done by Carlsson, Gorsky, and Mellit \cite{GCM_2017} showed that in fact $\mathbb{A}_{q,t}$ occurs naturally in the context of equivariant cohomology of Hilbert schemes. 

Recent work by Ion and Wu \cite{Ion_2022} has solidified the links between the work of Carlsson and Mellit on $\mathbb{A}_{q,t}$ and the representation theory of double affine Hecke algebras. Ion and Wu introduce the $^+$stable-limit double affine Hecke algebra $\mathscr{H}^{+}$ along with a representation of $\mathscr{H}^{+}$ on the space of almost-symmetric functions, $\mathscr{P}_{as}^{+}$, from which one can recover the standard $\mathbb{A}_{q,t}$ representation. The main obstruction in making a stable-limit theory for the double affine Hecke algebras is the lack of an inverse/directed-limit system of the double affine Hecke algebras in the traditional sense. Ion and Wu get around this obstruction by introducing a new notion of convergence (Defn. \ref{defn5}) for sequences of polynomials with increasing numbers of variables along with limit versions of the standard Cherednik operators defined by this convergence. 

Central to the study of the standard Cherednik operators are the non-symmetric Macdonald polynomials. The non-symmetric Macdonald polynomials in full generality were introduced first by Cherednik \cite{C_2001} in the context of proving the Macdonald constant-term conjecture. The introduction of the double affine Hecke algebra, along with the non-symmetric Macdonald polynomials by Cherednik, constituted a significant development in representation theory. They serve as a non-symmetric counterpart to the symmetric Macdonald polynomials introduced by Macdonald as a $q,t$-analog of Schur functions. Further, they give an orthogonal basis of the polynomial representation consisting of weight vectors for the Cherednik operators. The spectral theory of non-symmetric Macdonald polynomials is well understood using the combinatorics of affine Weyl groups. The correct choice of symmetrization applied to a non-symmetric Macdonald polynomial will yield their symmetric counterpart. The type A symmetric Macdonald polynomials are a remarkable basis for symmetric polynomials simultaneously generalizing many other well studied bases which can be recovered by appropriate specializations of values for $q$ and $t$. The aforementioned modified Macdonald functions $\widetilde{H}_{\mu}$ can be obtained via a plethystic transformation from the symmetric Macdonald polynomials in sufficiently many variables.

It is natural to seek a stable-limit extension for the non-symmetric Macdonald polynomials following the methods of Ion and Wu. In particular, does the standard $\mathscr{H}^{+}$ representation $\mathscr{P}_{as}^{+}$ have a basis of weight vectors for the limit Cherednik operators $\y_i$? The first main theorem of this paper (Theorem \ref{main theorem}) answers this question in the affirmative. In the second main theorem of this paper (Theorem \ref{second main theorem}) we use a new operator $\Psi_{p_1}$, which commutes with the limit Cherednik operators, to distinguish between $\y$-weight vectors with the same $\y$-weight. The operator $\Psi_{p_1}$ is up to a change of variables an extension of Haiman's operator $\Delta'$ \cite{HMacDG} from $\Lambda$ to $\sP_{as}^{+}$ (Remark \ref{remark about delta'}). The operator $\Psi_{p_1}$ is a limit of operators from finite variable DAHAs. We conjecture (Conjecture \ref{conjecture}) that for any symmetric function $F \in \Lambda$ there is an analogous sequence of operators from finite variable DAHAs giving an analogous operator $\Psi_{F}$ on $\sP_{as}^{+}.$ If true, this conjecture would yield an action of the elliptic Hall algebra \cite{BS} \cite{SV} on $\sP_{as}^{+}$ (Remark \ref{elliptic Hall algebra}).

This paper is the full version of the author's accepted submission to FPSAC2023 \cite{MBW}.

\subsubsection*{Structure of the paper}
Section \ref{defn and notations} introduces many of the definitions and notations needed throughout this paper. In Sections \ref{stable limits of macD poly} and \ref{weight basis section}
we construct a basis of weight vectors for the limit Cherednik operators $\mathscr{Y}_i$.
Our strategy for this is the following. First, in Section \ref{stable limits of macD poly} we show that the non-symmetric Macdonald polynomials have stable-limits in the sense that if we start with a composition $\mu$ and consider the compositions $\mu * 0^m$ for $m \geq 0$ then the corresponding sequence of non-symmetric Macdonald polynomials $E_{\mu * 0^m}$ converges to an element $\widetilde{E}_{\mu}$ of $\mathscr{P}_{as}^{+}$. Next, in Section \ref{weight basis section} we show that these limits of non-symmetric Macdonald polynomials are $\mathscr{Y}$-weight vectors. Importantly, the newly constructed set of $\widetilde{E}_{\mu}$ do \textbf{\textit{not}} span $\mathscr{P}_{as}^{+}$. To fill in these gaps, the lowering operators $d_{-}$ from $\mathbb{A}_{q,t}$ are used to create enough $\y$-weight vectors to span $\mathscr{P}_{as}^{+}$. Finally, a symmetrization operator is used to show that the spanning set obtained from this process is actually a basis in Theorem \ref{main theorem}.

Lemma \ref{convergence of eigenvalues}, Corollary \ref{stable macdonald are weight vectors}, and Lemma \ref{weight vectors and lowering} together give a description of the weights for the above weight basis of  $\mathscr{P}_{as}^{+}$; in other words we describe the $\y$-spectrum. 

In the last two sections of this paper we investigate some applications of Theorem \ref{main theorem}. In 
Section \ref{recurrence relations section} we derive some recurrence relations for the stable-limit Macdonald function basis similar to the classical Knop-Sahi relations. In Section \ref{one dim wt spaces section} we construct an operator $\Psi_{p_1}$ on $\sP_{as}^{+}$ which is diagonal on the stable-limit Macdonald function basis, and thus commutes with the limit Cherednik operators $\y$.  The action of $\Psi_{p_1}$  distinguishes between our basis elements with identical $\y$-weight. This leads to the second main theorem of this paper, Theorem \ref{second main theorem}, where we prove that after adding this new operator to the algebra of limit Cherednik operators the resulting algebra is commutative and has one dimensional weight spaces in $\sP_{as}^{+}.$

\subsubsection*{Acknowledgments} 
The author would first like to thank their advisor Monica Vazirani for her tremendous help with the proof checking and editing of this paper and for her continued guidance of the author through the hurdles of the academic world. The author would like to thank the FPSAC2023 referees who alerted the author to an unpublished work of Ion and Wu \cite{IW_2022} which independently determines the $\y$-spectrum on $\mathscr{P}_{as}^{+}$.
The author would also like to thank Daniel Orr and Ben Goodberry for their insights regarding the symmetrization operators occurring in subsection \ref{symmetrization subsection}.

\section{Definitions and Notation}\label{defn and notations}

\subsection{Double Affine Hecke Algebras in Type GL}

We present here the conventions that will be used in this paper for the double affine Hecke algebra of type $GL$. Take note of the quadratic relation $(T_{i}-1)(T_{i}+t) = 0$ which has been chosen to match with the conventions in \cite{Ion_2022} but may differ from other authors.

\clearpage
\begin{defn} \label{defn1}
Define the \textbf{\textit{double affine Hecke algebra}} $\mathscr{H}_n$ to be the $\mathbb{Q}(q,t)$-algebra generated by $T_1,\ldots,T_{n-1}$, $X_1^{\pm 1},\ldots,X_{n}^{\pm 1}$, and $Y_1^{\pm 1},\ldots,Y_n^{\pm 1}$ with the following relations:

\begin{multicols}{2}
\begin{enumerate}[(i)]
    \item 
    \label{def-i} 
    $(T_i -1)(T_i +t) = 0$,
    \item [] $T_iT_{i+1}T_i = T_{i+1}T_iT_{i+1}$,
    \item [] $T_iT_j = T_jT_i$, $|i-j|>1$,
    \item 
    \label{def-ii} $T_i^{-1}X_iT_i^{-1} = t^{-1}X_{i+1}$,
    \item []$T_iX_j = X_jT_i$, $j \notin \{i,i+1\}$,
    \item []$X_iX_j = X_jX_i$,
    \item 
    \label{def-iii}$T_iY_iT_i = tY_{i+1}$,
    \item []$T_iY_j = Y_jT_i$, $j\notin \{i,i+1\}$,
    \item []$Y_iY_j = Y_jY_i$,
    \item 
    \label{def-iv}$Y_1T_1X_1 = X_2Y_1T_1$,
    \item 
    \label{def-v}$Y_1X_1\cdots X_n = qX_1\cdots X_nY_1$
    \item []
\end{enumerate}
\end{multicols}

Further, define the special element $\omega_n$ by $$\omega_n := T_{n-1}^{-1}\cdots T_1^{-1}Y_1^{-1}.$$ This conveniently allows us to write $$Y_1 = \omega_n^{-1}T_{n-1}^{-1}\cdots T_1^{-1}.$$ When required we will write $Y_i^{(n)}$ for the element $Y_i$ in $\mathscr{H}_n$ to differentiate between the element $Y_i^{(m)}$ in a different $\mathscr{H}_m$ for $n \neq m$.

We will often use the following basic fact about $\mathscr{H}_n$ the proof of which we will omit. 

\begin{lem} \label{sym x and y commute with T's}
    Let $f(X_1,\ldots,X_n), g(Y_1,\ldots, Y_n) \in \mathscr{H}_n$ be symmetric Laurent polynomials in $X$'s and $Y$'s respectively. Then for all $1 \leq i \leq n-1$, 
    $$[T_i,f(X_1,\ldots,X_n)] = [T_i,g(Y_1,\ldots, Y_n)] = 0.$$
\end{lem}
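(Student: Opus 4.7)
The plan is to reduce commutativity of $T_i$ with arbitrary symmetric (Laurent) polynomials to commutativity with just two generators per adjacent pair: $X_i + X_{i+1}$ and $X_iX_{i+1}$.

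First I would extract the two basic commutation identities hidden in relation \ref{def-ii}. Rewriting $T_i^{-1} X_i T_i^{-1} = t^{-1} X_{i+1}$ as $T_i^{-1} X_i = t^{-1} X_{i+1} T_i$ and using the inverse form $T_i^{-1} = t^{-1}(T_i + t - 1)$ coming from the quadratic relation \ref{def-i}, I would derive
\begin{equation*}
T_i X_i = X_{i+1} T_i + (1-t) X_i, \qquad T_i X_{i+1} = X_i T_i + (t-1) X_i.
\end{equation*}
Adding these gives $[T_i, X_i + X_{i+1}] = 0$. Multiplying the first on the right by $X_{i+1}$ and the second on the left by $X_{i+1}$ (and using that $X_iX_{i+1} = X_{i+1}X_i$) likewise yields $[T_i, X_iX_{i+1}] = 0$. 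This is the only genuinely computational step.

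Next I would invoke the elementary fact that any polynomial in $X_1,\ldots,X_n$ that is invariant under the swap $X_i \leftrightarrow X_{i+1}$ can be rewritten as a polynomial in the variables
\[
X_1,\ldots,X_{i-1},\; X_i+X_{i+1},\; X_iX_{i+1},\; X_{i+2},\ldots,X_n,
\]
since any two-variable symmetric polynomial lies in $\mathbb{Q}(q,t)[e_1,e_2]$. A symmetric Laurent polynomial $f(X_1,\ldots,X_n)$ is in particular $s_i$-invariant, so it can be written in this form. Since $T_i$ commutes with each of these generators — with $X_j$ for $j \notin \{i,i+1\}$ by \ref{def-ii}, and with $X_i + X_{i+1}$ and $X_iX_{i+1}$ by the previous paragraph — and since these generators all commute with one another, $T_i$ commutes with any polynomial expression in them, hence with $f$.

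To pass from symmetric polynomials to symmetric Laurent polynomials, I would clear denominators: write $f = (X_1\cdots X_n)^{-N} h(X_1,\ldots,X_n)$ for some symmetric polynomial $h$ and some $N \geq 0$. The element $X_1 \cdots X_n = (X_iX_{i+1}) \cdot \prod_{j \neq i,i+1} X_j$ commutes with $T_i$ by the above, and inverting a commuting element preserves commutation, so $(X_1\cdots X_n)^{-1}$ also commutes with $T_i$. Therefore so does $f$. The argument for $g(Y_1,\ldots,Y_n)$ is identical after swapping the roles of \ref{def-ii} and \ref{def-iii}, noting that the sign in the analogue of the two commutation identities changes but the symmetric combinations $Y_i+Y_{i+1}$ and $Y_iY_{i+1}$ still commute with $T_i$. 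There is no real obstacle here beyond making the first step — the commutation of $T_i$ with the two elementary symmetric functions in $\{X_i,X_{i+1}\}$ — clean and unambiguous.
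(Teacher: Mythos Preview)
Your argument is correct and is the standard way to prove this fact. The paper itself omits the proof entirely (``the proof of which we will omit''), so there is no approach to compare against. One small expositional wrinkle: in your second paragraph you state the rewriting fact for \emph{polynomials} and then immediately apply it to a symmetric \emph{Laurent} polynomial $f$, which does not quite follow as written; but your next paragraph fixes this cleanly by clearing denominators with $(X_1\cdots X_n)^{-N}$, so the logic is sound once the paragraphs are read together.
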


\end{defn}

\subsubsection{Standard DAHA Representation}
\begin{defn} \label{defn2}
Let $\mathscr{P}_n = \mathbb{Q}(q,t)[x_1^{\pm 1},\ldots,x_n^{\pm 1}]$. The \textbf{\textit{ standard representation}} of $\mathscr{H}_n$ is given by the following action on $\mathscr{P}_n$:

\begin{center}
\begin{itemize}
    \item $T_if(x_1,\ldots,x_n) = s_i f(x_1,\ldots,x_n) +(1-t)x_i \frac{1-s_i}{x_i-x_{i+1}}f(x_1,\ldots,x_n)$
    \item $X_if(x_1,..,x_n)= x_if(x_1,\ldots,x_n)$
    \item $\omega_nf(x_1,\ldots,x_n) = f(q^{-1}x_n,x_1,\ldots,x_{n-1})$
\end{itemize}
\end{center}

Here $s_i$ denotes the operator that swaps the variables $x_i$ and $x_{i+1}$. Under this action the $T_i$ operators are known as the \textbf{\textit{Demazure-Lusztig operators}}. For $q,t$ generic $\mathscr{P}_n$ is known to be a faithful representation of $\mathscr{H}_n$. The action of the elements $Y_1,\ldots,Y_n \in \mathscr{H}_n$ are called \textbf{\textit{Cherednik operators}}.
\end{defn}

Set $\mathscr{H}_n^{+}$ to be the positive part of $\mathscr{H}_n$ i.e. the subalgebra generated by $T_1,\ldots,T_{n-1}$, $X_1,\ldots,X_n$, and $Y_1,\ldots,Y_n$ without allowing for inverses in the $X$ and $Y$ elements and set $\mathscr{P}_n^{+} = \mathbb{Q}(q,t)[x_1,\ldots,x_n]$. Importantly, 
$\mathscr{P}_n^{+}$ is a $\mathscr{H}_n^{+}$-submodule of $\mathscr{P}_n$. 

\begin{defn}
    For $1\leq i \leq n-1$ define the intertwiners, $\varphi_i^{(n)} \in \mathscr{H}_n$, as 
    $$\varphi_i^{(n)}:= [T_i,Y_i^{(n)}] = T_iY_i^{(n)} - Y_i^{(n)}T_i.$$
\end{defn}

The intertwiner elements have the following properties which are readily verified from the relations of $\mathscr{H}_{n}$: 
\begin{itemize}
    \item $\varphi_i^{(n)} = T_i(Y_i^{(n)}-Y_{i+1}^{(n)}) + (1-t)Y_{i+1}^{(n)}$
    \item $\varphi_i^{(n)}Y_j^{(n)} = Y_{s_{i}(j)}^{(n)}\varphi_i^{(n)}$
    \item $(\varphi_i^{(n)})^2 = (Y_i^{(n)}-tY_{i+1}^{(n)})(Y_{i+1}^{(n)}-tY_i^{(n)}).$
\end{itemize}

\subsubsection{Non-symmetric Macdonald Polynomials}
Before discussing non-symmetric Macdonald polynomials we must first review some basic combinatorial definitions.

\begin{defn}
 In this paper, a \textbf{\textit{composition}} will refer to a finite tuple $\mu = (\mu_1,\ldots,\mu_n)$ of non-negative integers. We allow for the empty composition $\emptyset$ with no parts. We will let $\Comp$ denote the set of all compositions. The length of a composition $\mu = (\mu_1,\ldots,\mu_n)$ is $\ell(\mu) = n$ and the size of the composition is $| \mu | = \mu_1+\ldots+\mu_n$. As a convention we will set $\ell(\emptyset) = 0$ and $|\emptyset| = 0.$ We say that a composition $\mu$ is \textbf{\textit{reduced}} if $\mu = \emptyset$ or $\mu_{\ell(\mu)} \neq 0.$ We will let $\Compred$ denote the set of all reduced compositions. Given two compositions $\mu = (\mu_1,\ldots,\mu_n)$ and $\beta = (\beta_1,\ldots,\beta_m)$, define $\mu * \beta = (\mu_1,\ldots,\mu_n,\beta_1,\ldots,\beta_m)$. A \textbf{\textit{partition}} is a composition $\lambda = (\lambda_1,\ldots,\lambda_n)$ with $\lambda_1\geq \ldots \geq \lambda_n \geq 1$. Note that vacuously we allow for the empty partition $\emptyset.$ We denote the set of all partitions by $\Par$. We denote $\sort(\mu)$ to be the partition obtained by ordering the nonzero elements of $\mu$ in weakly decreasing order. The dominance ordering for partitions is defined by $\lambda \trianglelefteq \nu$ if for all $i\geq 1$, $\lambda_1 + \ldots +\lambda_i \leq \nu_1 + \ldots +\nu_i$ where we set $\lambda_i = 0$ whenever $i > \ell(\lambda)$ and similarly for $\nu$. If $\lambda \trianglelefteq \nu$ and $\lambda \neq \nu.$ we will write $\lambda \triangleleft \nu$.
 
 We will in a few instances use the notation $\mathbbm{1}(p)$ to denote the value $1$ if the statement p is true and $0$ otherwise. In this paper we will write $\mathfrak{S}_n$ for the permutation group on the set $[n]:= \{1,\ldots,n\}.$
 \end{defn}

In line with the conventions in \cite{haglund2007combinatorial} we define the Bruhat order on the type $GL_n$ weight lattice $\mathbb{Z}^{n}$ as follows. 

\begin{defn}
Let $e_1,...,e_n$ be the standard basis of $\mathbb{Z}^n$ and let $\alpha \in \mathbb{Z}^n$. We define the\textbf{\textit{ Bruhat ordering}} on $\mathbb{Z}^n$, written simply by $<$, by first defining cover relations for the ordering and then taking their transitive closure. If $i<j$ such that $\alpha_i < \alpha_j$ then we say $\alpha > (ij)(\alpha)$ and additionally if $\alpha_j - \alpha_i > 1$ then $(ij)(\alpha) > \alpha + e_i - e_j$ where $(ij)$ denotes the transposition swapping $i$ and $j.$
\end{defn}

As an equivalent definition we say $\alpha < \beta$ if $\sort(\alpha) \triangleleft \sort(\beta)$ and in the case that $\lambda = \sort(\alpha) = \sort(\beta)$ then we have $\alpha < \beta$ when $\sigma < \gamma$ in the Bruhat order for minimal length permutations $\sigma, \gamma$ with $\sigma(\alpha) = \lambda$, $\gamma(\beta) = \lambda$. It is important to note that with respect to the Bruhat order any weakly decreasing vector $v \in \mathbb{Z}^n$ is the minimal element in its permutation orbit $\mathfrak{S}_n.v$.

\begin{defn} \label{defn3}
The \textbf{\textit{non-symmetric Macdonald polynomials}} (for $GL_n$) are a family of Laurent polynomials $E_{\mu} \in \mathscr{P}_n$ for $\mu \in \mathbb{Z}^n$ uniquely determined by the following:

\begin{itemize}
    \item Triangularity: Each $E_{\mu}$ has a monomial expansion of the form $E_{\mu} = x^{\mu} + \sum_{\lambda < \mu} a_{\lambda}x^{\lambda}$

    \item Weight Vector: Each  $E_{\mu}$ is a weight vector for the operators $Y_1^{(n)},\ldots,Y_n^{(n)} \in \mathscr{H}_n$.
\end{itemize}
\end{defn}

The non-symmetric Macdonald polynomials are a $Y^{(n)}$-weight basis for the $\mathscr{H}_n$ standard representation $\mathscr{P}_n$. For $\mu \in \mathbb{Z}^n$, $E_{\mu}$ is homogeneous with degree $\mu_1+\ldots +\mu_n$. Further, the set of $E_{\mu}$ corresponding to $\mu \in \mathbb{Z}_{\geq 0}^n$ gives a basis for $\mathscr{P}_n ^{+}$.

\subsubsection{Combinatorial Formula for Non-symmetric Macdonald Polynomials}

Note that the $q,t$ conventions in \cite{haglund2007combinatorial} differ from those appearing in this paper. In the below theorem the appropriate translation $q \rightarrow q^{-1}$ has been made.

In \cite{haglund2007combinatorial}, Haglund, Haiman, and Loehr give an explicit monomial expansion formula for the non-symmetric Macdonald polynomials in terms of the combinatorics of \textbf{\textit{non-attacking labellings}} of certain box diagrams corresponding to compositions which we will now review.

\begin{defn}\cite{haglund2007combinatorial}
For a composition $\mu = (\mu_1,\dots,\mu_n)$ define the column diagram of $\mu$ as 
$$dg'(\mu):= \{(i,j)\in \mathbb{N}^2 : 1\leq i\leq n, 1\leq j \leq \mu_i \}.$$ This is represented by a collection of boxes in positions given by $dg'(\mu)$. The augmented diagram of $\mu$ is given by 
$$\widehat{dg}(\mu):= dg'(\mu)\cup\{(i,0): 1\leq i\leq n\}.$$
Visually, to get $\widehat{dg}(\mu)$ we are adding a bottom row of boxes on length $n$ below the diagram $dg'(\mu)$. Given $u = (i,j) \in dg'(\mu)$ define the following:
\begin{itemize}
    \item $\leg(u) := \{(i,j') \in dg'(\mu): j' > j\}$
    \item $\arm^{\mathleft}(u) := \{(i',j) \in dg'(\mu): i'<i, \mu_{i'} \leq \mu_i\} $
    \item $\arm^{\mathright}(u):= \{(i',j-1) \in \widehat{dg}(\mu): i'>i, \mu_{i'}<\mu_{i}\}$
    \item $\arm(u) := \arm^{\mathleft}(u) \cup \arm^{\mathright}(u)$
    \item $\mathleg(u):= |\leg(u)| = \mu_i -j$
    \item $a(u) := |\arm(u)|.$
\end{itemize}
A filling of $\mu$ is a function $\sigma: dg'(\mu) \rightarrow \{1,...,n\}$ and given a filling there is an associated augmented filling $\widehat{\sigma}: \widehat{dg}(\mu) \rightarrow \{1,...,n\}$ extending $\sigma$ with the additional bottom row boxes filled according to $\widehat{\sigma}((j,0)) = j$ for $j = 1,\dots,n$. Distinct lattice squares $u,v \in \mathbb{N}^2$ are said to attack each other if one of the following is true:
\begin{itemize}
\item $u$ and $v$ are in the same row 
\item $u$ and $v$ are in consecutive rows and the box in the lower row is to the right of the box in the upper row.
\end{itemize}
A filling $\sigma: dg'(\mu) \rightarrow \{1,\dots,n\}$ is non-attacking if $\widehat{\sigma}(u) \neq \widehat{\sigma}(v)$ for every pair of attacking boxes $u,v \in \widehat{dg}(\mu).$
For a box $u= (i,j)$ let $d(u) = (i,j-1)$ denote the box just below $u$. Given a filling $\sigma:dg'(\mu)\rightarrow \{1,\dots,n\}$, a descent of $\sigma$ is a box $u \in dg'(\mu)$ such that $\widehat{\sigma}(u) > \widehat{\sigma}(d(u)).$
Set $\Des(\widehat{\sigma})$ to be the set of descents of $\widehat{\sigma}$ and define 
$$\maj(\widehat{\sigma}):= \sum_{u \in \Des(\widehat{\sigma})} (\mathleg(u)+1).$$
The reading order on the diagram $\widehat{dg}(\mu)$ is the total ordering on the boxes of $\widehat{dg}(\mu)$ row by row, from top to bottom, and from right to left within each row. If $\sigma: dg'(\mu) \rightarrow \{1,\dots,n\}$ is a filling, an inversion of $\widehat{\sigma}$ is a pair of attacking boxes $u,v \in \widehat{dg}(\mu)$ such that $u < v$ in reading order and $\widehat{\sigma}(u) > \widehat{\sigma}(v).$ Set $\Inv(\widehat{\sigma})$ to be the set of inversions of $\widehat{\sigma}$. Define the statistics 
\begin{itemize}
    \item $\inv(\widehat{\sigma}):= |\Inv(\widehat{\sigma})| -|\{i<j: \mu_i \leq \mu_j\}| - \sum_{u \in \Des(\widehat{\sigma})} a(u)$
    \item $\coinv(\widehat{\sigma}):= \left( \sum_{u \in dg'(\mu)} a(u) \right) -\inv(\widehat{\sigma}).$
    \end{itemize}
Lastly, for a filling $\sigma:dg'(\mu) \rightarrow \{1,\dots,n\}$ set $$x^{\sigma}:= x_1^{|\sigma^{-1}(1)|}\cdots x_n^{|\sigma^{-1}(n)|}.$$
\end{defn}

The combinatorial formula for non-symmetric Macdonald polynomials can now be stated.

\begin{thm}\cite{haglund2007combinatorial} \label{HHL}
For a composition $\mu$ with $\ell(\mu) = n$ the following holds:
    $$E_{\mu} = \sum_{\substack{\sigma: \mu \rightarrow [n]\\ \text{non-attacking}}} x^{\sigma}q^{-\maj(\widehat{\sigma})}t^{\coinv(\widehat{\sigma})} \prod_{\substack{u \in dg'(\mu) \\ \widehat{\sigma}(u) \neq \widehat{\sigma}(d(u))}} \left( \frac{1-t}{1-q^{-(\mathleg(u)+1)}t^{(a(u)+1)}} \right). $$
\end{thm}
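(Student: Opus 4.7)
The plan is to verify directly that the sum on the right-hand side, call it $F_\mu$, satisfies both characterizing properties of $E_\mu$ in Definition \ref{defn3}: a triangular monomial expansion with leading term $x^\mu$, and being a simultaneous eigenvector for the Cherednik operators $Y_i^{(n)}$. Uniqueness then forces $F_\mu = E_\mu$.

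For triangularity I would first isolate the unique non-attacking filling producing the monomial $x^\mu$, namely the column-constant filling $\widehat{\sigma}(i,j) = i$, and observe that for this filling $\widehat{\sigma}(u) = \widehat{\sigma}(d(u))$ at every box, so $\maj(\widehat{\sigma}) = 0$, the product over descent-generating boxes is empty, and a direct count forces $\coinv$ to contribute $t^0$ as well, producing exactly $x^\mu$ with coefficient $1$. For every other non-attacking filling $\sigma$, the content vector $(|\sigma^{-1}(1)|,\ldots,|\sigma^{-1}(n)|)$ is strictly less than $\mu$ in the Bruhat order; this follows because the augmented bottom row $\widehat{\sigma}(j,0) = j$ together with the non-attacking condition forces every "misplaced" label to be compensated by labels migrating in a direction that strictly lowers the sorted partition (or the Bruhat rank of the sorting permutation within its orbit).

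For the $Y$-eigenvector property I would invoke the Knop-Sahi recurrences, which uniquely determine $\{E_\mu\}$ from $E_{(0)} = 1$ via a cyclic shift $\mu \mapsto (\mu_2,\ldots,\mu_n,\mu_1+1)$ implemented by $\omega_n^{-1}$ and an intertwiner relation $E_{s_i\mu} = c_\mu \varphi_i^{(n)} E_\mu$ whenever $\mu_i < \mu_{i+1}$, then check both recurrences combinatorially on $F_\mu$. The cyclic recurrence is handled by a direct bijection: rotate the leftmost column to the right, reindex labels cyclically, and track how $\maj$ and $\coinv$ accumulate a single $q$-power matching the expected eigenvalue shift from $\omega_n^{-1}$.

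The main obstacle is the intertwiner step. The left and right arms $\arm^{\mathleft},\arm^{\mathright}$ are defined asymmetrically precisely to encode the non-symmetric Bruhat order, so swapping $\mu_i,\mu_{i+1}$ does not induce any clean bijection on non-attacking fillings. Instead one must group fillings into packets indexed by their restrictions outside columns $i$ and $i+1$, and express the Demazure-Lusztig action of $T_i$ as a local alternating sum over the label choices in those two columns. Reorganizing this sum into the shape of the $F_{s_i\mu}$ formula requires a careful check that the denominators $1 - q^{-(\mathleg(u)+1)}t^{a(u)+1}$ telescope correctly as arm and leg lengths change, and that configurations attacking the augmented bottom row match up on both sides. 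This inclusion-exclusion with rational denominators is the genuinely delicate part of the argument, and it is exactly the mechanism by which the seemingly ad hoc HHL formula reproduces the spectral data of the Cherednik operators.
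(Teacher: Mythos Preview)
The paper does not prove this theorem; it is quoted verbatim from \cite{haglund2007combinatorial} as an established result and used as a tool throughout (e.g., in the proof of Theorem~\ref{HHL tail expansion}). There is therefore no ``paper's own proof'' to compare against.

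That said, your outline tracks the strategy of the original Haglund--Haiman--Loehr argument: verify the characterizing triangularity and then show the combinatorial sum satisfies the Knop--Sahi recursion (cyclic shift plus intertwiner). A few cautions if you intend to flesh this out. First, your claim that the column-constant filling has $\coinv = 0$ is correct but not a ``direct count'': one must check that $|\Inv(\widehat{\sigma})| - |\{i<j:\mu_i\le\mu_j\}|$ cancels exactly against $\sum_{u} a(u)$ for this particular filling, which already uses the asymmetric arm definition in a nontrivial way. Second, your triangularity argument for the remaining fillings is too vague; the non-attacking condition alone does not obviously force Bruhat-lowering, and in \cite{haglund2007combinatorial} this is handled differently. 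Third, you correctly identify the intertwiner step as the crux, but what you have written is a description of the difficulty rather than a proof sketch; the actual argument in \cite{haglund2007combinatorial} does not proceed by a direct $T_i$-computation on packets but rather via an intermediate ``integral form'' and a careful bijective analysis. As written your proposal is an accurate high-level roadmap but would not stand as a proof without substantial further work on exactly the step you flag as delicate.
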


\begin{example}
We finish this subsection with a visual example of a non-attacking filling and its associated statistics. Below is the augmented filling $\widehat{\sigma}$ of a non-attacking filling $\sigma: (3,2,0,1,0,0) \rightarrow [6]$ pictured as labels inside the boxes of $\widehat{dg}(3,2,0,1,0,0).$

\begin{center}

\ytableausetup{centertableaux, boxframe= normal, boxsize= 2em}
\begin{ytableau}
6 & \none & \none & \none & \none & \none \\
4 &   1   & \none & \none & \none & \none \\
1 &   2   & \none &   3   & \none & \none \\
1 &   2   &   3   &   4   &   5   &   6   \\
\end{ytableau}

\end{center}

Let $u$ be the column 1 box of $\widehat{dg}(3,2,0,1,0,0)$ filled with a $4$ in the above diagram. Notice that $u$ is a descent box of $\widehat{\sigma}$ as $4$ is larger than the label $1$ of the box $d(u)$ just below $u.$ Further, we see that $a(u) = 2$ and $\mathleg(u) = 1$. Considering the diagram as a whole now we see that $x^{\sigma} = x_1^{2}x_2x_3x_4x_6$, $\maj(\widehat{\sigma}) = 3$, $|\Inv(\widehat{\sigma})| = 21$, $\inv(\widehat{\sigma}) = 14$, and $\coinv(\widehat{\sigma}) = 1.$ The contribution of this non-attacking labelling to the HHL formula for $E_{(3,2,0,1,0,0)} \in \sP_{6}^{+}$ is 
$$x_1^2x_2x_3x_4x_6 q^{-3}t^{1} \left( \frac{1-t}{1-q^{-1}t^3} \right)\left( \frac{1-t}{1-q^{-1}t^2} \right)\left( \frac{1-t}{1-q^{-2}t^{3}} \right)\left( \frac{1-t}{1-q^{-1}t^2} \right).$$
\end{example}

\subsection{Symmetric Functions}

 \begin{defn}
 Define the \textbf{\textit{ring of symmetric functions}} $\Lambda$ to be the subalgebra of the inverse limit of the symmetric polynomial rings $\mathbb{Q}(q,t)[x_1,\ldots,x_n]^{\mathfrak{S}_n}$ with respect to the quotient maps sending $x_n \rightarrow 0$ consisting of those elements with bounded $x$-degree. For $i \geq 0$ define the $i$-th \textbf{\textit{power sum symmetric function}} by
 $$p_i = x_1^i + x_2^i + \dots .$$ It is a classical result that $\Lambda$ is isomorphic to $\mathbb{Q}(q,t)[p_1,p_2,\dots]$. For any expression $G = a_1g^{\mu_1} + a_2g^{\mu_2} +\dots$ with rational scalars $a_i \in \mathbb{Q}$ and distinct monomials $g^{\mu_i}$ in a set of algebraically independent commuting free variables $\{g_1,g_2,\dots\}$ the \textbf{\textit{plethsytic evaluation}} of $p_i$ at the expression $G$ is defined to be $$p_i[G] := a_1g^{i\mu_1} +a_2g^{i \mu_2}+ \dots .$$ Note that $g_i$ are allowed to be $q$ or $t.$ Here we are using the convention that $i\mu = (i\mu_1,\ldots, i \mu_r)$ for $\mu = (\mu_1,\cdots, \mu_r).$ The definition of plethystic evaluation on power sum symmetric functions extends to all symmetric functions $F \in \Lambda$ by requiring $F \rightarrow F[G]$ be a $\mathbb{Q}(q,t)$-algebra homomorphism. Note that for $F \in \Lambda$, $F = F[x_1+x_2+\ldots]$ and so we will often write $F = F[X]$ where $X:= x_1+ x_2 + \ldots.$ For a partition $\lambda$ define the \textbf{\textit{monomial symmetric function}} $m_\lambda$ by 
 $$m_\lambda := \sum_{\mu} x^{\mu}$$
 where we range over all distinct monomials $x^{\mu}$ such that $\sigma(\mu) = \lambda$ for some permutation $\sigma$. For $n \geq 0$ define the \textbf{\textit{complete homogeneous symmetric function}} $h_n$ by 
 $$h_n:= \sum_{|\lambda|= n} m_\lambda .$$ We can extend plethysm to $\mathbb{Q}(q,t)[[p_1,p_2,\dots]]$.  The \textbf{\textit{plethystic exponential}} is defined to be the element of $\mathbb{Q}(q,t)[[p_1,p_2,\dots]]$ given by 
 $$Exp[X]:= \sum_{n \geq 0} h_n[X].$$ 
 \end{defn}

Here we list some notable properties of the plethystic exponential which will be used later in this paper.

\begin{itemize}
    \item $Exp[0] = 1$
    \item $Exp[X+Y]= Exp[X]Exp[Y]$
    \item $Exp[x_1+x_2+\dots] = \prod_{i=1}^{\infty} \left( \frac{1}{1-x_i} \right)$
    \item $Exp[(1-t)(x_1+x_2+\dots)] = \prod_{i=1}^{\infty} \left( \frac{1-tx_i}{1-x_i} \right)$
\end{itemize}

\begin{example}
Here we give a few examples of plethystic evaluation.
    \begin{itemize}
        \item $p_3[1+5t+qt^2] = 1+5t^3 + q^3t^6$
        \item $s_{2}[(1-t)X] = (\frac{p_2+p_{1,1}}{2})[(1-t)X] = \frac{(1-t^2)p_2[X]+(1-t)^2p_{1,1}[X]}{2}$
        \item $Exp[\frac{t}{1-t}]= \prod_{n=1}^{\infty}(\frac{1}{1-t^n})$
    \end{itemize}
\end{example}

\subsubsection{Hall-Littlewood Symmetric Functions}
For the purposes of this paper we need the following explicit collection of symmetric functions.
\begin{defn}
    For $n \geq 0$ define the \textbf{\textit{Jing vertex operator}} $\mathscr{B}_n \in End_{\mathbb{Q}(q,t)}(\Lambda)$ by
    $$\mathscr{B}_n[F] : = \langle z^n \rangle F[X-z^{-1}]Exp[(1-t)zX].$$
    Here $\langle z^n \rangle $ denotes the operator which extracts the coefficient of $z^n$ of any formal series in $z$. For a partition $\lambda = (\lambda_1,...,\lambda_r)$ define the \textbf{\textit{Hall-Littlewood symmetric function}}, $\mathcal{P}_{\lambda}$, by 
    $$\mathcal{P}_{\lambda}:= \mathscr{B}_{\lambda_1}\cdots \mathscr{B}_{\lambda_r}(1).$$
\end{defn}

Note that the operator $\mathscr{B}_n$ is graded with degree $n.$ The definition of the Hall-Littlewood symmetric functions in this paper matches with \cite{Ion_2022} and \cite{CM_2015} but differs from that of other authors. As we will see later in Proposition \ref{idempotent = HL} the $\mathcal{P}_{\lambda}[X]$ are the same as the dual Hall-Littlewood symmetric functions $Q_{\lambda}[X;t]$ defined by Macdonald \cite{Macdonald}. These symmetric functions have the following useful properties.
\begin{itemize}
    \item $\mathcal{P}_{\lambda}$ is homogeneous with degree $|\lambda|$
    \item $\mathcal{P}_{(n)}[X] = h_{n}[(1-t)X]$
    \item If $n \geq \lambda_1$ then $\mathscr{B}_{n}(\mathcal{P}_{\lambda}) = \mathcal{P}_{n*\lambda}$
    \item $\mathscr{B}_{0}(\mathcal{P}_{\lambda}) = t^{\ell(\lambda)} \mathcal{P}_{\lambda}$
\end{itemize}
Lastly, it is a classical result that the collection $\{\mathcal{P}_\lambda \mid \lambda \in \Par\}$ is a basis of $\Lambda$.

\subsection{Stable-Limit DAHA of Ion and Wu}
As the index $n$ varies, the standard $\mathscr{H}_n$ representations, $\sP_n$, fail to form a direct/inverse system of compatible $\mathscr{H}_n$ representations. However, as the authors Ion and Wu investigate in \cite{Ion_2022}, this sequence of representations is compatible enough to allow for the construction of a limiting representation for a new algebra resembling a direct limit of the double affine Hecke algebras of type $GL$. We will start by giving the definition of this algebra.

\begin{defn}\cite{Ion_2022} \label{defn4}
The \textit{\textbf{$^+$stable-limit double affine Hecke algebra}} of Ion and Wu, $\mathscr{H}^{+}$, is the algebra  generated over $\mathbb{Q}(q,t)$ by the elements $T_i,X_i,Y_i$ for $i \geq 1$ satisfying the following relations:

\begin{center}
\begin{itemize}
    \item The generators $T_i,X_i$ for $i \in \mathbb{N}$ satisfy 
    \eqref{def-i}  and \eqref{def-ii}
    of Defn. \ref{defn1}.
    \item The generators $T_i,Y_i$ for $i \in \mathbb{N}$ satisfy 
    \eqref{def-i} and \eqref{def-iii}
    of Defn. \ref{defn1}.
    \item  $Y_1T_1X_1 = X_2Y_1T_1.$
 \end{itemize}
 \end{center}
 \end{defn}

Importantly, there is no relation of the form $Y_1X_1\cdots X_n = q X_1\cdots X_n Y_1$ in $\mathscr{H}^{+}$. As such there is no invertible '$\omega$' element in $\mathscr{H}^{+}$ which in $\mathscr{H}_n$ normally realizes the cyclic symmetry of the affine type $A$ root systems.

\begin{defn}\cite{Ion_2022} \label{defn5}
 Let $\mathscr{P}_{\infty}^{+}$ denote the inverse limit of the rings $\mathscr{P}_{k}^{+}$ with respect to the homomorphisms $\pi_k: \sP_{k+1}^{+} \rightarrow \sP_k^{+}$ which send $x_{k+1}$ to 0 at each step. We can naturally extend $\pi_k$ to a map $\mathscr{P}_{\infty}^{+} \rightarrow \mathscr{P}_k$ which will be given the same name. Let $\mathscr{P}(k)^{+} := \mathbb{Q}(q,t)[x_1,\ldots,x_k]\otimes \Lambda[x_{k+1}+x_{k+2}+\ldots]$. Define the \textbf{\textit{ring of almost symmetric functions}} by $\mathscr{P}_{as}^{+} := \bigcup_{k\geq 0} \mathscr{P}(k)^{+}$. Note $\mathscr{P}_{as}^{+} \subset \mathscr{P}_{\infty}^{+}.$ Define $\rho: \mathscr{P}_{as}^{+} \rightarrow x_1\mathscr{P}_{as}^{+}$ to be the linear map defined by $\rho(x_1^{a_1}\cdots x_n^{a_n}F[x_{m}+x_{m+1}+\ldots]) = \mathbbm{1}(a_1 > 0) x_1^{a_1}\cdots x_n^{a_n}F[x_{m}+x_{m+1}+\ldots] $ for $F \in \Lambda$. Note that $\rho$ restricts to maps $\sP_n \rightarrow x_1\sP_n$ which are compatible with the quotient maps $\pi_n$.
 \end{defn}

The ring $\sP_{as}^{+}$ is a free graded $\Lambda$-module with homogeneous basis given simply by the set of monomials $x^{\mu}$ with $\mu$ reduced. Therefore, $\sP_{as}^{+}$ has the homogeneous $\mathbb{Q}(q,t)$ basis given by all $x^{\mu}m_{\lambda}[X]$ ranging over all reduced compositions $\mu$ and partitions $\lambda$. Further, the dimension of the homogeneous degree d part of $\mathscr{P}(k)^{+}$ is equal to the number of pairs $(\mu,\lambda)$ of reduced compositions $\mu$ and partitions $\lambda$ with $|\mu|+|\lambda| = d$ and $\ell(\mu) \leq k$. 
 
In order to define the operators required for Ion and Wu's main construction we must first review the new definition of convergence introduced in \cite{Ion_2022}. 

 \begin{defn} \cite{Ion_2022}\label{defn6}
 Let $(f_m)_{m \geq 1}$ be a sequence of polynomials with $f_m \in \mathscr{P}_m^{+}$. Then the sequence $(f_m)_{m \geq 1}$ is \textbf{\textit{convergent}} if there exist some N and auxiliary sequences $(h_m)_{m\geq1}$, $(g^{(i)}_m)_{m\geq 1}$, and $(a^{(i)}_m)_{m\geq 1}$ for $1\leq i \leq N$ with $h_m, g^{(i)}_m \in \mathscr{P}_{m}^{+}$, $a^{(i)}_m \in \mathbb{Q}(q,t)$ with the following properties:

\begin{itemize}
    \item For all $m$, $f_m = h_m + \sum_{i=1}^{N} a^{(i)}_m g^{(i)}_m$.
    \item The sequences $(h_m)_{m\geq1}$, $(g^{(i)}_m)_{m\geq1}$ for $1\leq i \leq N$ converge in $\mathscr{P}_{\infty}^{+}$ with limits $h,g^{(i)}$ respectively. That is to say, $\pi_m(h_{m+1}) = h_m$ and $\pi_m(g_{m+1}^{(i)}) = g_{m}^{(i)}$ for all $1\leq i \leq N$ and $m \geq 1$. Further, we require $g^{(i)} \in \mathscr{P}_{as}^{+}$.
    \item The sequences $a^{(i)}_m$ for $1\leq i \leq N$ converge with respect to the t-adic topology on $\mathbb{Q}(q,t)$ with limits $a^{(i)}$ which are required to be in $\mathbb{Q}(q,t)$.
\end{itemize}

The sequence is said to have a limit given by
$\lim_{m} f_m = h + \sum_{i=1}^{N}a^{(i)}g^{(i)}.$

\end{defn}

This definition of convergence is a mix of both the stronger topology arising from the inverse system given by the maps $\pi_m$ and the t-adic topology arising from the ring $\mathbb{Q}(q,t)$. It is important to note that part of the above definition requires convergent sequences to always be written as a finite sum of fixed length with terms that converge independently. 

Here we list a few instructive examples of convergent sequences and their limits:

\begin{itemize}
    \item $\lim_{m} t^m = 0$
    \item $\lim_{m} 1+\ldots +t^m = \frac{1}{1-t}$
    \item $\lim_{m} \frac{1}{q^2-t^m}(x_3^2 +\ldots+ x_{m}^2 ) = q^{-2} p_2[x_3+\ldots].$
\end{itemize}

\begin{remark}\label{useful defn of convergence}
    In this paper we will be entirely concerned with convergent sequences $(f_m)_{m \geq 1}$ with almost symmetric limits $\lim_{m} f_m \in \sP_{as}^{+}$. In this case it follows readily from definition that each of these convergent sequences necessarily will have the form 
    $$f_m(x_1,\ldots,x_m) = \sum_{i=1}^{N} c_i^{(m)}x^{\mu^{(i)}}F_i[x_1+\ldots + x_m]$$
    where $N \geq 1$ is fixed, $c_i^{(m)}$ are convergent sequences of scalars with $\lim_{m} c_i^{(m)} \in \mathbb{Q}(q,t)$, $F_i$ are symmetric functions, and $\mu^{(i)}$ are compositions. Here we will consider $x^{\mu^{(i)}} = 0$ in $\sP_{m}$ whenever $\ell(\mu^{(i)}) > m$.
\end{remark}

\begin{defn} \cite{Ion_2022}
    For $m \geq 1$ suppose $A_m$ is an operator on $\sP_m^{+}$. The sequence $(A_m)_{m \geq 1}$ of operators is said to \textbf{\textit{converge}} if for every $f \in \sP_{as}^{+}$ the sequence $(A_m(\pi_m(f)))_{m \geq 1}$ converges to an element of $\sP_{as}^{+}$. From \cite{Ion_2022} the corresponding operator on $\sP_{as}^{+}$ given by $A(f): = \lim_{m} A_m(\pi_m(f))$ is well defined and said to be the limit of the sequence $(A_m)_{m \geq 1}$. In this case we will simply write $A = \lim_{m} A_m$.
\end{defn}

There are two important examples of convergent operator sequences which will be relevant for the rest of this paper. For all $i 
\geq 1$ and $m \geq 1$ let $X_i^{(m)}$ denote the operator on $\sP_m^{+}$ given by $0$ if $m < i$ and by $X_i^{(m)}f = x_i f$ if $i \leq m$. Similarly for $i \geq 1$ and $m \geq 1$ let $T_i^{(m)}$ denote the operator on $\sP_m^{+}$ given by $0$ if $m-1 < i$ and by $T_if = s_if + (1-t)x_i \frac{f-s_if}{x_i-x_{i+1}}$ if $i \leq m-1$. Then for all $i \geq 1$ it is immediate from definition that the sequences $(X_i^{(m)})_{m \geq 1}$ and $(T_i^{(m)})_{m \geq 1}$ converge to operators $X_i$ and $T_i$ respectively on $\sP_{as}^{+}$. Further, their corresponding actions are given for $f \in \sP_{as}^{+}$ simply by 
\begin{itemize}
    \item $X_i(f) = x_if$
    \item $T_i(f) = s_if + (1-t)x_i \frac{f-s_if}{x_i-x_{i+1}}.$ 
\end{itemize}

The following important technical proposition of Ion and Wu will be used repeatedly in this paper. 

\begin{prop}[Prop. 6.21 \cite{Ion_2022}]
    If $A = \lim_{m} A_m$ and $f = \lim_{m} f_m$ are limit operators and limit functions respectively then $A(f) = \lim_{m} A_m(f_m).$
\end{prop}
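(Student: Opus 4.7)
The plan is to split the sequence as
\[
A_m(f_m) = A_m(\pi_m(f)) + A_m\bigl(f_m - \pi_m(f)\bigr)
\]
and argue that the first term converges to $A(f)$ directly from the definition of $A = \lim_m A_m$, while the second term converges to $0$. The first piece is immediate: since $f \in \sP_{as}^+$, the sequence $(A_m(\pi_m(f)))_{m \geq 1}$ converges to $A(f)$ by the very definition of a limit operator. So the content of the proposition is showing that the Ion-Wu convergence is compatible with the discrepancy between $f_m$ and the truncation $\pi_m(f)$.

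To handle the second term, I would invoke Remark \ref{useful defn of convergence}: because $f = \lim_m f_m$ lies in $\sP_{as}^+$, the sequence $f_m$ can be written as
\[
f_m = \sum_{i=1}^{N} c_i^{(m)}\, x^{\mu^{(i)}}\, F_i[x_1 + \cdots + x_m],
\]
with $c_i^{(m)} \to c_i$ in the $t$-adic topology and $f = \sum_i c_i\, x^{\mu^{(i)}}\, F_i[X]$. Applying $\pi_m$ to the almost-symmetric function $f$ yields $\pi_m(f) = \sum_i c_i\, x^{\mu^{(i)}}\, F_i[x_1 + \cdots + x_m]$, so
\[
f_m - \pi_m(f) = \sum_{i=1}^{N} \bigl(c_i^{(m)} - c_i\bigr)\, \pi_m\bigl(x^{\mu^{(i)}}\, F_i[X]\bigr),
\]
and by $\mathbb{Q}(q,t)$-linearity of $A_m$,
\[
A_m\bigl(f_m - \pi_m(f)\bigr) = \sum_{i=1}^{N} \bigl(c_i^{(m)} - c_i\bigr)\, A_m\bigl(\pi_m(x^{\mu^{(i)}}\, F_i[X])\bigr).
\]
Each inner sequence $A_m(\pi_m(x^{\mu^{(i)}} F_i[X]))$ is convergent with almost symmetric limit $A(x^{\mu^{(i)}} F_i[X])$ since $A = \lim_m A_m$. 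Applying Remark \ref{useful defn of convergence} once more to each such sequence, I can express it as $\sum_{j} d_{ij}^{(m)}\, x^{\nu^{(ij)}}\, G_{ij}[x_1 + \cdots + x_m]$ with $d_{ij}^{(m)} \to d_{ij}$ in the $t$-adic topology.

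Substituting yields a finite sum of terms of the form $\bigl(c_i^{(m)} - c_i\bigr) d_{ij}^{(m)}\, x^{\nu^{(ij)}}\, G_{ij}[x_1 + \cdots + x_m]$. Since multiplication in $\mathbb{Q}(q,t)$ is continuous with respect to the $t$-adic topology, and since $c_i^{(m)} - c_i \to 0$, each scalar coefficient $\bigl(c_i^{(m)} - c_i\bigr) d_{ij}^{(m)}$ tends to $0$ $t$-adically, while the functional factors are, by construction, $\pi_m$-truncations of fixed elements of $\sP_{as}^+$ and hence converge in $\sP_{\infty}^+$ with almost symmetric limits. This realises $A_m(f_m - \pi_m(f))$ in the explicit form of Definition \ref{defn6} with every scalar sequence going to zero, so $\lim_m A_m(f_m - \pi_m(f)) = 0$. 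Adding the two sub-limits gives $\lim_m A_m(f_m) = A(f)$.

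The main obstacle, and the reason the proof is slightly more than a one-line observation, is that Ion-Wu convergence interleaves two different topologies — the inverse-limit topology coming from $\pi_m$ and the $t$-adic topology on scalars — so one cannot simply invoke a generic continuity statement. The argument hinges on pushing everything into the explicit normal form of Remark \ref{useful defn of convergence}, where the two topologies are decoupled onto separate factors, and then using continuity of multiplication in the $t$-adic topology to conclude that a product of a $t$-adic null sequence with a convergent sequence of scalars is $t$-adically null.
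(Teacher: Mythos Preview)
The paper does not supply its own proof of this proposition: it is quoted verbatim as Proposition~6.21 of Ion--Wu \cite{Ion_2022} and used as a black box throughout. There is therefore nothing in the present paper to compare your argument against.

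That said, your argument is a reasonable reconstruction of how such a statement would be verified from Definition~\ref{defn6} and Remark~\ref{useful defn of convergence}. The decomposition $A_m(f_m) = A_m(\pi_m(f)) + A_m(f_m - \pi_m(f))$ is the natural move, and the reduction of the error term to a finite sum of products of a $t$-adically null scalar sequence with a convergent one is exactly what is needed to fit the shape of Definition~\ref{defn6}. One point to be careful about: your argument leans entirely on Remark~\ref{useful defn of convergence}, which the paper asserts ``follows readily from definition'' but does not itself justify; in particular you need that the decomposition in that remark can be chosen with a single fixed $N$ and fixed $\mu^{(i)}$, $F_i$ valid for all $m$, and that the $h$-term of Definition~\ref{defn6} can be absorbed into that form when the limit is almost symmetric. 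If you are writing this up independently of \cite{Ion_2022}, that reduction deserves a sentence or two of its own.
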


This is a sort of continuity statement for convergent sequences of operators. The utility of the above proposition is that for an operator arising as the limit of finite variable operators, $A = \lim_{m} A_m$ say, we can use \textbf{\textit{any}} sequence $(f_m)_{m \geq 1}$ converging to $f \in \sP_{as}^{+}$ in order to calculate $A(f)$. 

\subsubsection{The Standard +Stable-Limit DAHA Representation}

Ion and Wu begin their construction of the standard representation of $\mathscr{H}^{+}$ by noting the following key fact. 

\begin{prop}\cite{Ion_2022}
    For $n \geq 1$  
    $$\pi_{n-1} t^n Y_1^{(n)}X_1 = t^{n-1} Y_1^{(n-1)}X_1\pi_{n-1}.$$
\end{prop}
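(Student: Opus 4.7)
The plan is to reduce the identity to a single computational observation about the Demazure--Lusztig operator $T_{n-1}$, after first putting $Y_1^{(n)}X_1$ into a normal form in which the only $n$-dependent scalar is precisely $t^{-(n-1)}$ — so that multiplication by $t^n$ leaves behind only the constant $t$.

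First I would unpack $Y_1^{(n)} = \omega_n^{-1}T_{n-1}^{-1}\cdots T_1^{-1}$ and push the initial $X_1$ to the right through the string of $T_i^{-1}$'s. Relation \eqref{def-ii} of Definition~\ref{defn1} rewrites as $T_i^{-1}X_i = t^{-1}X_{i+1}T_i$; iterating this $n-1$ times gives
$$T_{n-1}^{-1}\cdots T_1^{-1}X_1 \;=\; t^{-(n-1)}\,X_n\,T_{n-1}\cdots T_1.$$
Next, inverting the formula for $\omega_n$ in Definition~\ref{defn2} gives the explicit action $\omega_n^{-1}f(x_1,\ldots,x_n)=f(x_2,\ldots,x_n,qx_1)$, which immediately yields the commutation $\omega_n^{-1}X_n = qX_1\omega_n^{-1}$. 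Assembling the pieces,
$$t^n Y_1^{(n)}X_1 \;=\; tq\,X_1\,\omega_n^{-1}\,T_{n-1}\cdots T_1,$$
and the exactly analogous computation in $\mathscr{H}_{n-1}$ gives $t^{n-1}Y_1^{(n-1)}X_1 = tq\,X_1\,\omega_{n-1}^{-1}\,T_{n-2}\cdots T_1$.

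Canceling the scalar $tq$ and the operator $X_1$ (which obviously commutes with $\pi_{n-1}$ since it only involves $x_1$) reduces the claim to
$$\pi_{n-1}\,\omega_n^{-1}\,T_{n-1}\cdots T_1 \;=\; \omega_{n-1}^{-1}\,T_{n-2}\cdots T_1\,\pi_{n-1}$$
as operators $\sP_n^{+}\to\sP_{n-1}^{+}$. For $f\in\sP_n^{+}$, set $h:=T_{n-2}\cdots T_1 f$. Then the left-hand side evaluates $T_{n-1}h$ at the point $(x_2,\ldots,x_{n-1},0,qx_1)$ — that is, at $y_{n-1}=0$, $y_n=qx_1$. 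The key observation is that $T_{n-1}$ collapses to the bare swap $s_{n-1}$ at $y_{n-1}=0$: in the formula $T_{n-1}h = s_{n-1}h + (1-t)y_{n-1}\frac{h-s_{n-1}h}{y_{n-1}-y_n}$, the prefactor $(1-t)y_{n-1}$ kills the divided-difference term. Hence the left side equals $h(x_2,\ldots,x_{n-1},qx_1,0)$. Since $T_1,\ldots,T_{n-2}$ do not involve $x_n$, they commute with the substitution $x_n=0$, so $h(y_1,\ldots,y_{n-1},0) = \bigl(T_{n-2}\cdots T_1\,\pi_{n-1}(f)\bigr)(y_1,\ldots,y_{n-1})$; substituting $(y_1,\ldots,y_{n-1}) = (x_2,\ldots,x_{n-1},qx_1)$ and recognising the definition of $\omega_{n-1}^{-1}$ gives exactly the right-hand side.

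I do not anticipate any serious obstacle in executing this plan; the only point requiring genuine care is the bookkeeping of which coordinate is sent to zero when $\omega_n^{-1}$ and $\pi_{n-1}$ are composed — the composition $\pi_{n-1}\omega_n^{-1}$ substitutes $y_{n-1}=0$, not $y_n=0$, and it is precisely this coincidence that triggers the vanishing of the $(1-t)y_{n-1}$ factor of $T_{n-1}$ and so makes the whole identity work.
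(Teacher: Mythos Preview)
Your argument is correct. The normal form $t^nY_1^{(n)}X_1 = tq\,X_1\,\omega_n^{-1}T_{n-1}\cdots T_1$ is derived cleanly, and the key reduction---that $T_{n-1}$ collapses to the transposition $s_{n-1}$ upon substituting $y_{n-1}=0$ because the divided-difference correction $(1-t)y_{n-1}\frac{h-s_{n-1}h}{y_{n-1}-y_n}$ is a polynomial multiple of $y_{n-1}$---is exactly the right mechanism, and your bookkeeping of which coordinate gets killed by $\pi_{n-1}\circ\omega_n^{-1}$ is accurate.

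Note, however, that the paper does not supply its own proof of this proposition: it is simply quoted from \cite{Ion_2022} (Ion--Wu) as an input to the construction of the limit Cherednik operators. So there is no in-paper argument to compare yours against. Your proof is a self-contained verification of the cited fact, and nothing in it is in tension with how the result is used downstream.
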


In other words, the action of the operators $t^nY_1^{(n)}$ and $t^{n-1}Y_1^{(n-1)}$ are compatible on $x_1\mathscr{P}_{n}$. As such there exists a limit operator $Y_1^{(\infty)}: x_1\mathscr{P}^{+}_{\infty} \rightarrow x_1\mathscr{P}^{+}_{\infty}$ such that $\pi_nY_1^{(\infty)} = t^nY_1^{(n)}$. A crucial idea of Ion and Wu is to extend the action of the operators $t^n Y_1^{(n)}$ on $x_1\mathscr{P}_{n}$ to all of $\mathscr{P}_{n}$ using the previously defined projection $\rho:\mathscr{P}_{n} \rightarrow x_1\mathscr{P}_{n}$.

\begin{defn}\cite{Ion_2022} \label{defn of deformed y's}
    Define the operator $\widetilde{Y}_1^{(n)} :=  \rho \circ t^n Y_1^{(n)}$. For $2 \leq i \leq n$ define $\widetilde{Y}_i^{(n)}$ by requiring $\widetilde{Y}_i^{(n)} = t^{-1}T_{i-1}\widetilde{Y}_{i-1}^{(n)}T_{i-1}.$
\end{defn}

A direct check shows that $\widetilde{Y}_1^{(n)}X_1 = t^n Y_1^{(n)}X_1$ so that $\widetilde{Y}_1^{(n)}$ extends the action of $t^n Y_1^{(n)}$ on $x_1\mathscr{P}_{n}$ as desired. The main utility of this specific choice of definition is the following theorem.

\begin{thm}\label{deformed Y's converge}\cite{Ion_2022}
    The sequence $(\widetilde{Y}_1^{(m)})_{m \geq 1}$ converges to an operator $\mathscr{Y}_1$ on $\sP_{as}^{+}$. Define the operators $\mathscr{Y}_i$ for $i \geq 2$ by $\y_{i}:= t^{-1}T_{i-1}\y_{i-1}T_{i-1}$. The operators $\mathscr{Y}_i$ along with the Demazure-Lusztig action of the $T_i$'s and multiplication by the $X_i$'s generate an $\mathscr{H}^{+}$ action on $\mathscr{P}_{as}^{+}$.
\end{thm}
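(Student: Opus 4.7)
The theorem has two assertions: convergence of $(\widetilde{Y}_1^{(m)})_{m \geq 1}$ and verification that the resulting limit operators satisfy the $\mathscr{H}^{+}$ relations. The plan is to handle these in sequence, using Definition \ref{defn5}, Remark \ref{useful defn of convergence}, and the explicit structure of $\rho$.

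For convergence, I would test on the homogeneous spanning set $\{x^\mu F[X] : \mu \in \Compred, F \in \Lambda\}$ of $\sP_{as}^{+}$ and split into two cases according to whether $\pi_m(x^\mu F[X])$ lies in $x_1 \sP_m^{+}$. When $\mu_1 > 0$, the factor $x_1$ is present, so $\widetilde{Y}_1^{(m)} = \rho \circ t^m Y_1^{(m)}$ reduces to $t^m Y_1^{(m)}$ on this element, and the stated compatibility $\pi_{n-1} t^n Y_1^{(n)} X_1 = t^{n-1} Y_1^{(n-1)} X_1 \pi_{n-1}$ yields the inverse-system consistency of the sequence in $\sP_{\infty}^{+}$. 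For $t$-adic convergence of the scalar coefficients, I would invoke the HHL combinatorial formula (Theorem \ref{HHL}) or triangularity of $Y_1^{(m)}$ in the non-symmetric Macdonald basis to bound $t$-valuations. For the residual case $\mu = \emptyset$, exploit the fact that $T_i^{-1}$ acts as the identity on any element symmetric in $x_i, x_{i+1}$ (immediate from the quadratic relation and $s_i F = F$), so that $Y_1^{(m)} F[x_1+\cdots+x_m] = \omega_m^{-1} F[x_1+\cdots+x_m] = F[qx_1 + x_2+\cdots+x_m]$. The resulting expression $t^m \rho(F[qx_1 + x_2+\cdots+x_m])$ has the form $t^m g_m$ where $g_m$ converges in $\sP_\infty^+$ to an element of $\sP_{as}^+$ (expand plethystically as $F[X_m + (q-1)x_1]$ via the Hopf coproduct), while $t^m \to 0$ in the $t$-adic sense, fitting Definition \ref{defn6} with limit $0$. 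The mixed case $\mu_1 = 0, \mu \neq \emptyset$ is handled by writing $F[X_m] = F[x_2+\cdots+x_m] + x_1 R_m$ and combining the two analyses.

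For the relations of $\mathscr{H}^{+}$, the generators $T_i$ and $X_i$ inherit (i) and (ii) of Definition \ref{defn1} verbatim from the Demazure-Lusztig action on $\sP_n^{+}$, which descends to $\sP_{as}^{+}$ since both families are limits of the finite-variable operators. The recursive definition $\mathscr{Y}_{i+1} := t^{-1} T_i \mathscr{Y}_i T_i$ encodes the first relation of (iii) by fiat. The remaining commutations $T_i \mathscr{Y}_j = \mathscr{Y}_j T_i$ for $j \notin \{i, i+1\}$ follow, for $i \geq 2$, from two facts: in finite DAHAs $T_i Y_1^{(m)} = Y_1^{(m)} T_i$ whenever $i \geq 2$, and the projection $\rho$ commutes with $T_i$ for $i \geq 2$ (since $T_i$ for $i \geq 2$ acts only on $x_2, \ldots, x_m$ and preserves $x_1$-divisibility). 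Combined with the recursion, this yields the general commutation for all admissible pairs. The crucial mixed relation $Y_1 T_1 X_1 = X_2 Y_1 T_1$ transfers from its finite-variable analogue after one additional check: $\rho(x_2 g) = x_2 \rho(g)$, which is direct from the definition of $\rho$ since multiplication by $x_2$ preserves $x_1$-divisibility. Applying $\rho$ to both sides of $t^m Y_1^{(m)} T_1 X_1 = X_2 t^m Y_1^{(m)} T_1$ and passing to the limit then gives the relation on $\sP_{as}^{+}$.

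The hard part will be the pure $Y$-commutativity $\mathscr{Y}_i \mathscr{Y}_j = \mathscr{Y}_j \mathscr{Y}_i$, which is the last relation hidden inside (iii) of Definition \ref{defn1}. In finite variables, the $Y_j^{(m)}$ mutually commute, but the presence of $\rho$ inside $\widetilde{Y}_1^{(m)}$ breaks this: one does not have $\rho \, Y_j \, \rho = \rho \, Y_j$ in general, so $\widetilde{Y}_1^{(m)} \widetilde{Y}_j^{(m)} \neq \widetilde{Y}_j^{(m)} \widetilde{Y}_1^{(m)}$ at the finite level. My strategy would be to exploit the fact that the limit computation in the case $\mu = \emptyset$ shows $\mathscr{Y}_1$ annihilates $\Lambda$, so the image of $\mathscr{Y}_1$ lies in $x_1 \sP_{as}^{+}$, where $\rho$ acts as the identity. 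On this invariant subspace the inner $\rho$'s inside iterates of the $\widetilde{Y}_i$'s become trivial at the limit, and the commutativity then reduces to the finite-variable commutativity $Y_i^{(m)} Y_j^{(m)} = Y_j^{(m)} Y_i^{(m)}$ via Proposition 6.21. The complementary quotient $\sP_{as}^{+} / x_1 \sP_{as}^{+} \cong \Lambda[x_2+x_3+\cdots]$ is killed by every $\mathscr{Y}_i$, so commutativity there is automatic, and patching the two pieces yields the global relation.
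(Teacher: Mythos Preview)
The paper does not give its own proof of this statement: Theorem~\ref{deformed Y's converge} is quoted from Ion--Wu \cite{Ion_2022} and no argument is supplied here. So there is nothing in the present paper to compare your proposal against; the content of the proof lives entirely in the cited reference.

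That said, your sketch has two substantive gaps worth flagging. First, in the convergence argument for the case $\mu_1>0$, the compatibility $\pi_{n-1}t^nY_1^{(n)}X_1=t^{n-1}Y_1^{(n-1)}X_1\pi_{n-1}$ only tells you that the sequence is consistent in $\sP_\infty^{+}$; it does not by itself guarantee that the limit lands in $\sP_{as}^{+}$. Your appeal to HHL or Macdonald triangularity for ``$t$-adic convergence of the scalar coefficients'' does not address this: what is needed is that the limit is a \emph{finite} $\mathbb{Q}(q,t)$-combination of terms $x^{\nu}F[X]$, and that is the genuinely delicate part of Ion--Wu's analysis.

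Second, your commutativity argument breaks down. You assert that $\sP_{as}^{+}/x_1\sP_{as}^{+}$ is killed by every $\mathscr{Y}_i$, equivalently that each $\mathscr{Y}_i$ has image in $x_1\sP_{as}^{+}$. This is true for $i=1$ by the definition via $\rho$, but false for $i\geq 2$: for instance $\mathscr{Y}_2=t^{-1}T_1\mathscr{Y}_1T_1$, and $T_1$ does not preserve $x_1\sP_{as}^{+}$ (already $T_1(x_1)=x_2+(1-t)x_1$ is not divisible by $x_1$). Consequently $x_1\sP_{as}^{+}$ is not $\mathscr{Y}_i$-invariant and the two pieces cannot be ``patched'' as you describe. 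The paper itself remarks just after the theorem that the finite-level operators $\widetilde{Y}_i^{(n)}$ genuinely fail to commute, so the passage to commutativity in the limit is the heart of the matter and requires the more careful argument carried out in \cite{Ion_2022}.
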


In particular, the authors Ion and Wu show that despite the fact that for $1 \leq i \neq j \leq n$, $\widetilde{Y}_i^{(n)}\widetilde{Y}_j^{(n)} \neq \widetilde{Y}_j^{(n)}\widetilde{Y}_i^{(n)}$ the limit Cherednik operators commute: $$\mathscr{Y}_i\mathscr{Y}_j = \mathscr{Y}_j\mathscr{Y}_i .$$
The action of the $\y_i$ operators respect the canonical filtration of $\sP_{as}^{+} = \bigcup_{k\geq 0} \sP(k)^{+}$. For all $n\geq 0$, the operators $\{\y_1,...,\y_n\}$ restrict to operators on the space $\sP(n)^{+}$ whereas the operators $\{\y_{n+1},\y_{n+2},...\}$ annihilate $\sP(n)^{+}$. Note that for $n = 0$, $\sP(0)^{+} = \Lambda$ so all of the operators $\y_i$ annihilate $\Lambda$.

\subsection{Double Dyck Path Algebra}
The \textbf{\textit{Double Dyck Path Algebra}} $\mathbb{A}_{q,t}$, introduced by Carlsson and Mellit \cite{CM_2015}, is a quiver path algebra with vertices indexed by non-negative integers with the following edge operators:
 
 \begin{center}
\begin{varwidth}{\textwidth}
\begin{itemize}
\setlength\itemsep{-0.5em}
    \item $d_{+},d_{+}^{*}: k \rightarrow k+1$
    \item $T_1,...,T_{k-1}: k \rightarrow k$
    \item $d_{-}: k+1 \rightarrow k.$
 \end{itemize}
 \end{varwidth}
 \end{center}
 
 The full set of relations for $\mathbb{A}_{q,t}$ are omitted here but can be found in \cite{CM_2015}. In order to match the parameter conventions in Ion and Wu's work \cite{Ion_2022} we will consider $\mathbb{A}_{t,q}$ as opposed to $\mathbb{A}_{q,t}$ formed by simply swapping $q$ and $t$ in the defining relations of $\mathbb{A}_{q,t}$. Here we highlight a few notable relations of $\mathbb{A}_{t,q}$ which will be required later:
  
\begin{center}
\begin{varwidth}{\textwidth}
\begin{itemize}
    \item The loops $T_1,...,T_{k-1}$ at vertex $k\geq 2$ generate a type $A$ finite Hecke algebra 
    \item $d_{-}^2T_{k-1} = d_{-}^2$ starting at vertex $k \geq 2$
    \item  $T_id_{-} = d_{-}T_i$ at vertex k for $1\leq i\leq k-2$
    \item $z_id_{-}= d_{-}z_i$ at vertex k for $1\leq i \leq k-1$ where $z_1 := \frac{t^k}{1-t}[d_{+}^{*},d_{-}]T_{k-1}^{-1}\cdots T_{1}^{-1}$ and $z_{i+1} = t^{-1}T_iz_iT_i.$
 \end{itemize}
 \end{varwidth}
 \end{center}

\subsubsection{The Standard $\mathbb{A}_{t,q}$ Representation and the +Stable-Limit DAHA}

Vital to the proof of the Compositional Shuffle Conjecture by Carlsson and Mellit \cite{CM_2015} is their construction of a particular representation of $\mathbb{A}_{t,q}$.

\begin{defn}\cite{CM_2015}\label{A qt defns}
    For $k \geq 0$ let $V_k = \mathbb{Q}(q,t)[y_1,\dots,y_k]\otimes \Lambda$ be associated to the vertex $k$ and denote by $V_{\bullet}$ be the system of spaces $V_k$. Let $\zeta_k$ denote the algebra homomorphism $$\zeta_kf(y_1,\dots,.y_{k-1},y_k) = f(y_2,\dots,y_k,qy_1).$$ If $f$ is a formal series with respect to the variable $y$ with coefficients in some ring R denote by $\mathfrak{c}_{y}(f) \in R$ the constant term of $f$ i.e. the coefficient of $y^{0}$ in $f$. Note that each $\mathfrak{S}_k$ acts on $V_k$ by permuting the variables $y_1,...,y_k.$ Define the following operators:
\begin{itemize}
    \item $T_iF = s_iF + (1-t)y_i \frac{F-s_iF}{y_i-y_{i+1}}$
    \item $d_{-}F = \mathfrak{c}_{y_k}(F[X-(t-1)y_k]Exp[-y_k^{-1}X])$
    \item $d_{+}F = -T_1\cdots T_k (y_{k+1}F[X+(t-1)y_{k+1}])$
    \item $d_{+}^{*}F = \zeta_kF[X+(t-1)y_{k+1}].$
\end{itemize}
\end{defn}

\begin{thm}\cite{CM_2015}
    The above operators define a representation of $\mathbb{A}_{t,q}$ on $V_{\bullet}.$
\end{thm}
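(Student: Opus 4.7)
The plan is to verify each defining relation of $\mathbb{A}_{t,q}$ acting on an arbitrary $F \in V_k$, grouping the relations by increasing difficulty. The verification separates naturally into the Hecke subalgebra relations at each vertex, the commutations between the $T_i$ and the arrows $d_-, d_+, d_+^*$, the decomposition/factorization relations for $d_+$, and finally the delicate mixed relations computing commutators of $d_-$ with $d_+^*$.

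First, the relations among $T_1,\ldots,T_{k-1}$ at vertex $k$ are standard: the given formula is the Demazure-Lusztig operator on $\mathbb{Q}(q,t)[y_1,\ldots,y_k]$, whose quadratic, braid, and far-commutation relations are classical and act trivially on the $\Lambda$-factor. Second, the commutations $T_i d_- = d_- T_i$ for $i \leq k-2$ follow because $d_-$ involves only $y_k$ and the symmetric variable $X$, leaving the interior $y$'s untouched; similarly $T_i d_+^* = d_+^* T_{i}$ for $i \leq k-1$ since $d_+^*$ acts on $y_{k+1}$ and on $X$. The relation $d_-^2 T_{k-1} = d_-^2$ reflects that iterated constant-term extraction in $y_{k-1}, y_k$ symmetrizes these variables and thus absorbs the extra Demazure-Lusztig term.

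Third, the relations involving $d_+$ are handled by unfolding the definition $d_+ F = -T_1\cdots T_k (y_{k+1} F[X+(t-1)y_{k+1}])$: after this rewriting everything reduces to a composition of $d_+^*$ with multiplication by $y_{k+1}$ and Hecke operators, so the $d_+$ relations follow from the already-verified Hecke and $d_+^*$ relations plus the braid relations. The compatibility of $\zeta_k$ with the plethystic shift $X \mapsto X + (t-1)y_{k+1}$ is the only point needing care here; it follows from the algebra homomorphism property of plethystic evaluation.

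The main obstacle is the commutator relation encoded in $z_1 = \tfrac{t^k}{1-t}[d_+^*, d_-] T_{k-1}^{-1}\cdots T_1^{-1}$ together with the compatibility $z_i d_- = d_- z_i$. To handle these, I would compute $d_+^* d_- F$ and $d_- d_+^* F$ independently by introducing both $y_k$ and $y_{k+1}$, writing out the plethystic shifts explicitly, and using $\operatorname{Exp}[X+Y] = \operatorname{Exp}[X]\operatorname{Exp}[Y]$ together with the generating-function identity $F[X - z^{-1}]\operatorname{Exp}[zX] = \sum_{n} (\mathscr{B}_n F) z^n$ to separate symmetric and non-symmetric contributions. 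Taking the difference and extracting the constant term in one of the two auxiliary variables should collapse the double plethystic shift (via $F[X+(t-1)y-(t-1)y] = F[X]$) and leave a single-variable residue whose Hecke conjugate is precisely $z_1$. Once $[d_+^*, d_-]$ is explicit, the relation $z_i d_- = d_- z_i$ is then reduced by the recursion $z_{i+1} = t^{-1}T_i z_i T_i$ to the interior commutations already established. Throughout, the real technical burden is bookkeeping of plethystic substitutions entangling the symmetric variable $X$ with the auxiliary $y_i$'s, but each step is a mechanical generating-function manipulation once the setup is fixed.
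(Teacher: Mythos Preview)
The paper does not prove this theorem at all: it is stated with the citation \cite{CM_2015} and no argument is given, since the result is due to Carlsson and Mellit and is simply imported here as background. There is therefore nothing in the present paper to compare your proposal against.

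That said, a brief comment on your sketch itself. The overall strategy of verifying the defining relations one block at a time is exactly how such results are proved, and your grouping (Hecke relations, arrow/Hecke commutations, the $d_-^2 T_{k-1} = d_-^2$ relation, and finally the commutator defining $z_1$) is sensible. One point of caution: you describe $d_+^*$ as acting ``on $y_{k+1}$ and on $X$'', but by definition $d_+^* F = \zeta_k F[X+(t-1)y_{k+1}]$ and $\zeta_k$ cyclically shifts \emph{all} of the $y$-variables, so the commutation of $d_+^*$ with $T_i$ is not as immediate as you suggest and requires tracking the index shift induced by $\zeta_k$. Likewise, the full relation list for $\mathbb{A}_{t,q}$ (which this paper omits) contains several further identities involving $d_+$, $d_+^*$, and $d_-$ beyond the ones you mention, and the genuinely delicate verifications in the original source are precisely those mixed relations. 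Your outline of the $[d_+^*,d_-]$ computation via plethystic exponentials and constant-term extraction is the right idea, but as written it remains a plan rather than a proof.
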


Ion and Wu use their construction of the standard $\mathscr{H}^{+}$ representation $\sP_{as}^{+}$ to recover the standard $\mathbb{A}_{t,q}$ representation $V_{\bullet}.$

\begin{thm}\cite{Ion_2022}
  There exists an $\mathbb{A}_{t,q}$ representation structure on $\sP_{\bullet} = (\sP(k)^{+})_{k\geq 0}$ isomorphic to the standard representation $V_{\bullet}$ such that at each vertex $k$, $z_i$ acts by $\y_i$ and $y_i$ acts by $X_i$. Further, according to this isomorphism $\sP(k)^{+}$ is identified with $V_k$ via the map $x_1^{a_1}\cdots x_k^{a_k}F[x_{k+1}+\ldots] \rightarrow y_1^{a_1}\cdots y_k^{a_k}F[\frac{X}{t-1}].$
\end{thm}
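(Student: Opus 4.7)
The plan is to transport the $\mathbb{A}_{t,q}$-module structure from $V_\bullet$ to $\sP_\bullet$ via the proposed identification, then verify the three compatibility claims individually.

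First I would check that the map $\phi_k : \sP(k)^{+} \to V_k$ defined by $x_1^{a_1}\cdots x_k^{a_k} F[x_{k+1}+x_{k+2}+\ldots] \mapsto y_1^{a_1}\cdots y_k^{a_k} F[X/(t-1)]$ is a $\mathbb{Q}(q,t)$-linear isomorphism. The plethystic substitution $p_i \mapsto p_i/(t^i-1)$ is an invertible endomorphism of $\Lambda$, and $\phi_k$ merely replaces $x_i$ by $y_i$ on the polynomial tensor factor while applying this plethysm on the $\Lambda$ factor, so $\phi_k$ is a bijection. Transport of structure then yields an $\mathbb{A}_{t,q}$-action on $\sP_\bullet$ such that each vertex space $\sP(k)^{+}$ carries the pullback of the $V_k$-action.

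Next I would verify the two easy identifications. Multiplication by $y_i$ on $V_k$ transports to multiplication by $x_i$ on $\sP(k)^{+}$, which is $X_i$ by definition. For $T_i$: both the formula on $V_k$ and the Demazure--Lusztig operator on $\sP(k)^{+}$ act only on the polynomial factor by the same rational expression with $y$'s replaced by $x$'s, and the plethystic factor $F[X/(t-1)]$ is fixed by $s_i$ for $i \leq k-1$; hence the transported $T_i$ coincides with the $T_i$ of the standard $\mathscr{H}^{+}$-action.

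The main task is showing $z_i$ transports to $\y_i$. Since both families satisfy the common recursion $z_{i+1}=t^{-1}T_i z_i T_i$ and $\y_{i+1}=t^{-1}T_i\y_i T_i$, and the $T_i$ already match, it suffices to check $z_1 \leftrightarrow \y_1$. I would expand $z_1=\frac{t^k}{1-t}[d_{+}^{*},d_{-}]T_{k-1}^{-1}\cdots T_{1}^{-1}$ using the formulas of Definition \ref{A qt defns} and translate under $\phi_k^{-1}$: the plethystic shift $X \mapsto X + (t-1)y_{k+1}$ becomes the genuine variable shift adjoining $x_{k+1}$ to the symmetric alphabet, and the constant-term operation $\mathfrak{c}_{y_k}$ becomes a $\rho$-like projection onto terms divisible by $x_{k+1}$. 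Independently, $\y_1 = \lim_m \rho \circ t^m Y_1^{(m)}$ with $Y_1^{(m)} = \omega_m^{-1}T_{m-1}^{-1}\cdots T_1^{-1}$. One then compares both sides on a spanning set, say $x^\mu \mathcal{P}_\lambda[x_{k+1}+x_{k+2}+\ldots]$ with $\ell(\mu)\leq k$, using Proposition 6.21 to pass to finite-variable approximations.

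The main obstacle is this last matching step: translating between the Jing-vertex-operator / constant-term presentation implicit in $d_{+}^{*}, d_{-}$ and the affine-Weyl-group presentation of $Y_1^{(m)}$. The normalizing prefactor $t^k/(1-t)$ and the braid $T_{k-1}^{-1}\cdots T_{1}^{-1}$ must be tracked carefully, as must the interplay between forming the commutator $[d_{+}^{*},d_{-}]$ on the $V_\bullet$ side and taking the stabilization $\lim_m \rho \circ t^m Y_1^{(m)}$ on the $\sP_\bullet$ side. Once $z_1 \leftrightarrow \y_1$ is established, the shared recursion delivers the full theorem.
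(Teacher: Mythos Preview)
The paper does not give its own proof of this theorem: it is stated with the citation \cite{Ion_2022} and no argument follows. The result is quoted from Ion and Wu's paper, so there is no ``paper's own proof'' to compare against.

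That said, your outline is the natural strategy and the easy parts are correct: the maps $\phi_k$ are manifestly linear isomorphisms, and the identifications $y_i \leftrightarrow X_i$ and $T_i \leftrightarrow T_i$ are immediate since both act only on the polynomial tensor factor. The reduction of $z_i \leftrightarrow \y_i$ to the single case $i=1$ via the common recursion $t^{-1}T_i(-)T_i$ is also valid.

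Where your proposal stops being a proof is exactly where you flag it: the verification that $z_1$ transports to $\y_1$. You describe the ingredients (translating $d_+^*$, $d_-$ through $\phi_k$, identifying the commutator with a limit of $\rho \circ t^m Y_1^{(m)}$) but do not carry out the computation. This step is genuinely the content of the theorem and requires a careful calculation in \cite{Ion_2022}; in particular, one must check that the constant-term/plethystic-shift description of $[d_+^*,d_-]$ on the $V_\bullet$ side produces, after transport, precisely the same operator as the stable limit $\lim_m \rho \circ t^m Y_1^{(m)}$, including the normalization $\frac{t^k}{1-t}$ and the trailing $T_{k-1}^{-1}\cdots T_1^{-1}$. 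Your remark that $\mathfrak{c}_{y_k}$ ``becomes a $\rho$-like projection'' is suggestive but not correct as stated: $\mathfrak{c}_{y_k}$ extracts a constant term in one variable, while $\rho$ projects onto the $x_1$-divisible part, and reconciling these through the plethystic change of alphabet is exactly the work that remains. As written, your proposal is a correct plan with the key lemma left open.
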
 

\section{Stable-Limits of Non-symmetric Macdonald Polynomials}\label{stable limits of macD poly}

We start by investigating the properties of certain sequences of non-symmetric Macdonald polynomials. We will find that if we fix any composition $\mu$ and consider the sequence of compositions $(\mu*0^m)_{m\geq 0}$ the corresponding sequence of non-symmetric Macdonald polynomials $(E_{\mu*0^m})_{m\geq 0}$ will converge in the sense of Definition \ref{defn6}. It is important to note that in most cases the sequence $(E_{\mu*0^m})_{m\geq 0}$ will not converge with respect to the inverse system $(\pi_{k}: \sP_{k+1} \rightarrow \sP_{k})_{k\geq 1}$. This should be expected because the spectra of the Cherednik operators acting on $\sP_{k+1}$ are incompatible with the spectra from the Cherednik operators acting on $\sP_{k}$. However, by using the HHL explicit combinatorial formula for the non-symmetric Macdonald polynomials we show that the combinatorics of non-attacking labellings underlying the sequence $(E_{\mu*0^m})_{m\geq 0}$ converge in a certain sense. The weaker convergence notion introduced by Ion and Wu is consistent with these combinatorics. For our purposes later in this paper we will heavily rely on the convergence of these sequences as a bridge between the limit Cherednik operators $\y_i$ and their classical counterparts.
 
We now show the convergence of the sequence $(E_{\mu*0^m})_{m \geq 0}$. First, we describe a convenient rearrangement of the monomials in each $E_{\mu*0^m}$.

\begin{thm} \label{HHL tail expansion}

Let $\mu$ be a composition with $\ell(\mu) = n$ and $m \geq 0$. Then $E_{\mu * 0^m}$ has the explicit expression given by 
$$ E_{\mu * 0^m} = \sum_{\substack{\lambda ~ \text{partition}  \\ |\lambda| \leq |\mu|}} m_{\lambda}[x_{n+1}+\ldots + x_{n+m}] \sum_{\substack{\sigma:\mu * 0^{\ell(\lambda)}  \rightarrow [n+\ell(\lambda)]\\ \text{non-attacking} \\ \forall i = 1,...,\ell(\lambda) \\ \lambda_i = |\sigma^{-1}(n+i)|}} x_1 ^{|\sigma^{-1}(1)|}\cdots x_n ^{|\sigma^{-1}(n)|} \Gamma^{(m)} (\widehat{\sigma})$$

where 

$$ \Gamma^{(m)}(\widehat{\sigma}) := q^{-\maj(\widehat{\sigma})}t^{\coinv(\widehat{\sigma})} \prod_{\substack{ u \in dg'(\mu * 0^{\ell(\lambda)}) \\ \widehat{\sigma}(u) \neq \widehat{\sigma}(d(u)) \\ u ~ \text{not in row } 1 }} \left( \frac{1-t}{1-q^{-(\mathleg(u)+1)}t^{(a(u)+1)}} \right) \prod_{\substack{ u \in dg'(\mu * 0^{\ell(\lambda)}) \\ \widehat{\sigma}(u) \neq \widehat{\sigma}(d(u)) \\ u ~ \text{in row } 1 }} \left( \frac{1-t}{1-q^{-(\mathleg(u) +1)}t^{(a(u) + m + 1)}} \right).    \ $$
 
 \end{thm}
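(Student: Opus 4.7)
The plan is to derive the formula by expanding the HHL combinatorial sum (Theorem~\ref{HHL}) for $E_{\mu * 0^m}$ and regrouping non-attacking fillings by the multiset of tail-label multiplicities. First I would record the key structural lemma: in any non-attacking filling $\sigma\colon \mu * 0^m \to [n+m]$, any row-$1$ box $u = (i,1)$ (necessarily with $i \leq n$) satisfies $\sigma(i,1) \leq i$. Indeed, $(i,1)$ attacks every augmented bottom-row box $(i',0)$ for $i < i' \leq n+m$, whose label is $i'$, forcing $\sigma(i,1) \notin \{i+1,\dots,n+m\}$. In particular, no tail label $k > n$ can appear in row~$1$, so every tail label of $\sigma$ lives in a row $\geq 2$ and its attacks never involve the augmented bottom row.

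With this in hand, I would set up the bijective reorganization. For a non-attacking $\sigma$ let $T_1 < \cdots < T_r$ be the distinct tail labels it uses, and let $\lambda = \sort(|\sigma^{-1}(T_1)|,\dots,|\sigma^{-1}(T_r)|)$, so that $\ell(\lambda) = r$. One then assembles a reduced non-attacking filling $\tau\colon \mu * 0^{\ell(\lambda)} \to [n+\ell(\lambda)]$ with $|\tau^{-1}(n+i)| = \lambda_i$, by relabeling the $T_j$'s so as to sort the multiplicities into $\lambda$. Because tail labels live only in rows $\geq 2$, this relabeling and the simultaneous shrinking of the augmented bottom row from length $n+m$ down to length $n+\ell(\lambda)$ both preserve the non-attacking condition. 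Summing the tail monomial $\prod_j x_{T_j}^{|\sigma^{-1}(T_j)|}$ over all choices of $\{T_1,\dots,T_r\} \subset \{n+1,\dots,n+m\}$ (and, when $\lambda$ has distinct parts, over the admissible orderings) produces exactly $m_\lambda[x_{n+1}+\dots+x_{n+m}]$, the outer factor in the theorem.

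The remaining task, which I expect to be the main technical obstacle, is to check that the HHL weight $q^{-\maj}t^{\coinv}\prod(\cdots)$ transfers correctly between $\sigma$ and $\tau$. Legs and arms of boxes in rows $\geq 2$ depend only on $\mu$ and are unchanged. For row-$1$ boxes the arm in shape $\mu * 0^m$ exceeds the quantity $a(u)$ used in the theorem (the arm contribution coming from $\mu$ itself) by exactly $m$, which is precisely what produces the $+m$ in the row-$1$ exponent $a(u) + m + 1$. Descents and inversions transfer under the order-preserving portion of the relabeling $T_j \mapsto n+j$; when achieving the sorted composition $\lambda$ forces a non-order-preserving swap of two tail labels, one must verify that the induced flips of attacking pairs cancel, using the fact that whenever one of the swapped labels has multiplicity $\geq 2$, the attacking pairs between the two labels occur in balanced configurations forced by the non-attacking constraint. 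After these statistic verifications, summing over $\lambda$ with $|\lambda| \leq |\mu|$ yields the claimed expansion.
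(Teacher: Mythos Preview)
Your overall architecture is sound, but it diverges from the paper's proof at exactly the point that ends up being the crux, and that divergence creates a genuine gap.

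The paper does \emph{not} prove combinatorially that the HHL weight is invariant under permuting the tail labels. Instead it invokes, as a black box, the classical fact (Cherednik) that $E_{\mu*0^m}$ is symmetric in $x_{n+1},\dots,x_{n+m}$. This immediately justifies replacing each tail monomial by the representative $x_{n+1}^{\lambda_1}\cdots x_{n+\ell(\lambda)}^{\lambda_{\ell(\lambda)}}$ and pulling out $m_\lambda[x_{n+1}+\cdots+x_{n+m}]$. After that, the only work is to track how the HHL statistics for the \emph{same} filling change when the ambient augmented diagram is $\widehat{dg}(\mu*0^m)$ versus $\widehat{dg}(\mu*0^{\ell(\lambda)})$; the paper does this by explicit bookkeeping, showing $\maj$ and $\coinv$ are independent of $m$ while the row-$1$ arms pick up the extra $+m$.

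Your route instead tries to build a weight-preserving bijection by relabeling the tail labels $T_1<\cdots<T_r$ down to $n+1,\dots,n+r$ and then further permuting to sort the multiplicities into $\lambda$. The order-preserving piece is fine, but the step ``when achieving the sorted composition $\lambda$ forces a non-order-preserving swap of two tail labels, one must verify that the induced flips of attacking pairs cancel'' is exactly a combinatorial proof of the partial symmetry of $E_{\mu*0^m}$, and this is not a routine verification: swapping two tail labels can alter $|\Inv(\widehat{\sigma})|$, the descent set, and hence $\sum_{u\in\Des}a(u)$ simultaneously, and your heuristic about ``balanced configurations forced by the non-attacking constraint'' does not obviously produce the needed cancellation. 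Unless you are prepared to carry out that involution argument in full (which is substantially harder than the rest of the proof), you should instead cite the symmetry of $E_{\mu*0^m}$ in the tail variables and proceed as the paper does.
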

 
 \begin{proof}
 First, start with directly applying the HHL formula (\ref{HHL}):
  $$ E_{\mu * 0^m} = \sum_{\substack{\sigma: \mu * 0^m \rightarrow [n + m]\\ \text{non-attacking}}} x^{\sigma}q^{-\maj(\widehat{\sigma})}t^{\coinv(\widehat{\sigma})} \prod_{\substack{u \in dg'(\mu * 0^m) \\ \widehat{\sigma}(u) \neq \widehat{\sigma}(d(u))}} \left( \frac{1-t}{1-q^{-(\mathleg(u)+1)}t^{(a(u)+1)}} \right) .$$
  
  We know that $E_{\mu * 0^m}$ is symmetric in the variables $x_{n+1},...,x_{n+m}$ \cite{C_2001} so it follows that the $\mathbb{Q}(q,t)[x_1,...,x_n]$-coefficient of each monomial in $x_{n+1},...,x_{n+m}$ is independent of the ordering of the latter variables. Hence, we find that by grouping these monomials by symmetry 

\begin{multline*}
    E_{\mu * 0^m} = \sum_{\lambda} m_{\lambda}[x_{n+1}+...+x_{n+m}] \sum_{\substack{\sigma: \mu * 0^m \rightarrow [n + m] \\ \text{non-attacking} \\ \forall i ~ \lambda_i = |\sigma^{-1}(n+i)|  }} x_1 ^{|\sigma^{-1}(1)|}\cdots x_n ^{|\sigma^{-1}(n)|} q^{-\maj(\widehat{\sigma})}t^{\coinv(\widehat{\sigma})} ~ \times
    \\ \prod_{\substack{u \in dg'(\mu) \\ \widehat{\sigma}(u) \neq \widehat{\sigma}(d(u))}} \left( \frac{1-t}{1-q^{-(\mathleg(u)+1)}t^{(a(u)+1)}} \right).
\end{multline*}
 
 Note that by degree considerations the only possible partitions $\lambda$ that have a nonzero contribution to the above sum have $|\lambda| \leq |\mu|$ and hence we can rewrite the above sums as 
  $$ \sum_{\lambda} \sum_{\substack{\sigma: \mu * 0^m \rightarrow [n + m] \\ \text{non-attacking} \\ \forall i ~ \lambda_i = |\sigma^{-1}(n+i)|  }} = \sum_{\substack{\lambda ~ \text{partition}  \\ |\lambda| \leq |\mu|}} \sum_{\substack{\sigma:\mu* 0^m \rightarrow [n+\ell(\lambda)]\\ \text{non-attacking} \\ \forall i ~ \lambda_i = |\sigma^{-1}(n+i)|}}    .$$
In the latter sum above we have written each $\sigma$ as a non-attacking labelling $\sigma: \mu*0^m \rightarrow [n+\ell(\lambda)]$ to emphasize that the numbers occurring in this labelling are contained in the set $[n+\ell(\lambda)]$ which is independent of $m$. However, these are still considered labellings of the diagram corresponding to $\mu*0^m$ and hence we calculate the corresponding $q,t$ coefficients in the HHL formula accordingly.
  
  We must now understand the dependence on $m$ of the statistics $\maj$, $\coinv$, $\mathleg$, and $a$ in each of the non-attacking labellings $\sigma: \mu*0^m \rightarrow [n+\ell(\lambda)]$ as $m$ varies. Fix a non-attacking labelling $\sigma: \mu * 0^k \rightarrow [n+k]$ for some $k \leq m$ and let $\sigma_m$ be the associated labelling of $\mu * 0^m$. Recall that 
  $$\maj(\widehat{\sigma}) = \sum_{u \in \Des(\widehat{\sigma})} (\mathleg(u) + 1)$$
  and similarly for $\maj(\widehat{\sigma}_m).$
  The only descent boxes of $\widehat{\sigma}_m$ occur in the diagram $dg'(\mu)$ itself and $\mathleg(u)$ for these boxes will not depend on m. Therefore, $\maj(\widehat{\sigma}_m) = \maj(\widehat{\sigma})$. For $u \in dg'(\mu * 0^m)$ clearly $u \in dg'(\mu)$ and by direct computation we see that when $u$ is not in row 1 then $a(u)$ does not depend on m. However, for u in row 1 $a(u)$ when calculated in the diagram $\widehat{dg}(\mu)$ increases to $a(u)+m$ when calculated in the diagram $\widehat{dg}(\mu*0^m)$. This comes from counting the extra row 0 boxes for each box in row 1. Also note that in any non-attacking labelling there cannot be descent boxes in row 1. Now from careful counting we get the following:
  \begin{itemize}
      \item $|\Inv(\widehat{\sigma}_m)| = |\Inv(\widehat{\sigma})| + (n+k)(m-k) + \binom{m-k}{2}$
      \item $ |\{i<j : (\mu * 0^m)_i \leq (\mu * 0^m)_j \} | = | \{i<j : (\mu*0^k)_i \leq (\mu*0^k)_j \} | + (\#\{i:\mu_i = 0\} +k)(m-k) + \binom{m-k}{2}$
      \item $\sum_{u \in \Des(\widehat{\sigma}_m)} a(u) = \sum_{u \in \Des(\widehat{\sigma})} a(u).$
  \end{itemize}

By using the above calculations and cancelling out terms we get

\begin{align*}
    \inv(\widehat{\sigma}_m) &= |\Inv(\widehat{\sigma}_m)| - | \{i<j: (\mu * 0^m)_i \leq (\mu * 0^m)_j \} | - \sum_{u \in \Des(\widehat{\sigma}_m)} a(u) \\
    &= |\Inv(\widehat{\sigma})| - | \{i<j: (\mu*0^k)_i \leq (\mu*0^k)_j \} | -\sum_{u \in \Des(\widehat{\sigma})} a(u) + (n- \#\{i:\mu_i = 0\})(m-k)\\
    &= \inv(\widehat{\sigma}) +\#\{i:\mu_i \neq 0\}(m-k).\\
\end{align*} 

  Further, from the prior observation about how arm, $a(u)$, changes with m we see that 
  $$\sum_{u \in dg'(\mu * 0^m)} a(u) = \#\{i:\mu_i \neq 0\}(m-k) + \sum_{u \in dg'(\mu*0^k)} a(u)$$
  where arm has been calculated in the corresponding diagrams. 

We then have 
\begin{align*}
    \coinv(\widehat{\sigma}_m)&= \left( \sum_{u \in dg'(\mu * 0^m)} a(u) \right) - \inv(\widehat{\sigma}_m) \\
    &= \left(\#\{i:\mu_i \neq 0\}(m-k) + \sum_{u \in dg'(\mu*0^k)} a(u)\right) - \left( \inv(\widehat{\sigma}) + \#\{i:\mu_i \neq 0\}(m-k)\right)\\
    & = \left( \sum_{u \in dg'(\mu*0^k)} a(u)\right) - \inv(\widehat{\sigma}) \\
    &= \coinv(\widehat{\sigma}).\\
\end{align*}
  Thus $\maj(\widehat{\sigma}_m) = \maj(\widehat{\sigma})$ and $\coinv(\widehat{\sigma}_m)= \coinv(\widehat{\sigma}).$

Lastly, we return to the expansion of $E_{\mu*0^m}$ we found above. For each partition $\lambda$ with $|\lambda| \leq |\mu|$ we now see that 

\begin{align*}
     &\sum_{\substack{\sigma:\mu* 0^m \rightarrow [n+\ell(\lambda)]\\ \text{non-attacking} \\ \forall i ~ \lambda_i = |\sigma^{-1}(n+i)|}} x_1 ^{|\sigma^{-1}(1)|}\cdots x_n ^{|\sigma^{-1}(n)|} q^{-\maj(\widehat{\sigma})}t^{\coinv(\widehat{\sigma})} \prod_{\substack{u \in dg'(\mu) \\ \widehat{\sigma}(u) \neq \widehat{\sigma}(d(u))}} \left( \frac{1-t}{1-q^{-(\mathleg(u)+1)}t^{(a(u)+1)}} \right) \\
     &= \sum_{\substack{\sigma:\mu* 0^{\ell(\lambda)} \rightarrow [n+\ell(\lambda)]\\ \text{non-attacking} \\ \forall i ~ \lambda_i = |\sigma^{-1}(n+i)|}} x_1 ^{|\sigma^{-1}(1)|}\cdots x_n ^{|\sigma^{-1}(n)|} \Gamma^{(m)}(\widehat{\sigma}).
\end{align*}

where
$$ \Gamma^{(m)}(\widehat{\sigma}) := q^{-\maj(\widehat{\sigma})}t^{\coinv(\widehat{\sigma})} \prod_{\substack{ u \in dg'(\mu * 0^{\ell(\lambda)}) \\ \widehat{\sigma}(u) \neq \widehat{\sigma}(d(u)) \\ u ~ \text{not in row } 1 }} \left( \frac{1-t}{1-q^{-(\mathleg(u)+1)}t^{(a(u)+1)}} \right) \prod_{\substack{ u \in dg'(\mu * 0^{\ell(\lambda)}) \\ \widehat{\sigma}(u) \neq \widehat{\sigma}(d(u)) \\ u ~ \text{in row } 1 }} \left( \frac{1-t}{1-q^{-(\mathleg(u) +1)}t^{(a(u) + m + 1)}} \right).    \ $$

and we calculate all of the associated statistics in their respective diagrams.

 \end{proof}

Now that we have conveniently rearranged the monomial terms of each $E_{\mu*0^m}$ and identified the dependence of the coefficients on the parameter $m$ we can give a simple proof that the sequence $(E_{\mu*0^m})_{m \geq 0}$ converges.

\begin{cor/defn}\label{convergence of macdonald}

Let $\mu$ be a composition with $\ell(\mu) = n$. The sequence $(E_{\mu * 0^m})_{m \geq 1}$ converges to an almost-symmetric function $\widetilde{E}_{\mu}:= \lim_{m} E_{\mu * 0^m} \in \sP_{as}^{+}$ given explicitly by 
$$ \widetilde{E}_{\mu} = \sum_{\substack{\lambda ~ \text{partition}  \\ |\lambda| \leq |\mu|}} m_{\lambda}[x_{n+1}+\ldots] \sum_{\substack{\sigma:\mu * 0^{\ell(\lambda)} \rightarrow [n+\ell(\lambda)]\\ \text{non-attacking} \\ \forall i = 1,...,\ell(\lambda) \\ \lambda_i = |\sigma^{-1}(n+i)|}} x_1 ^{|\sigma^{-1}(1)|}\cdots x_n ^{|\sigma^{-1}(n)|} \widetilde{\Gamma}(\widehat{\sigma})$$  

where 

$$\widetilde{\Gamma}(\widehat{\sigma}) := \lim_{m} \Gamma^{(m)}(\widehat{\sigma}) = q^{-\maj(\widehat{\sigma})}t^{\coinv(\widehat{\sigma})} \prod_{\substack{ u \in dg'(\mu * 0^{\ell(\lambda)}) \\ \widehat{\sigma}(u) \neq \widehat{\sigma}(d(u)) \\ u ~ \text{not in row } 1 }} \left( \frac{1-t}{1-q^{-(\mathleg(u)+1)}t^{(a(u)+1)}} \right) \prod_{\substack{ u \in dg'(\mu * 0^{\ell(\lambda)}) \\ \widehat{\sigma}(u) \neq \widehat{\sigma}(d(u)) \\ u ~ \text{in row } 1 }} \left( 1-t \right). $$

\end{cor/defn}

\begin{proof}
Note that the formula in Theorem \ref{HHL tail expansion} is a fixed size finite sum where the only dependence on m is in the $m_{\lambda}$ symmetric function terms and the $t^m$ occurring in the $\Gamma^{(m)}$ terms. Thus in the sense of Ion and Wu, see Definition \ref{defn6}, this sequence converges to a well defined element of $\sP_{as}^{+}$. In particular, each $m_{\lambda}[x_{n+1}+\ldots +x_{n+m}]$ converges to $m_{\lambda}[x_{n+1}+\ldots]$ and $t^m$ converges to 0 in the $\widetilde{\Gamma}$-term. Simplifying gives the formula above. 

\end{proof}

It follows from Corollary \ref{convergence of macdonald} that the almost symmetric functions $\widetilde{E}_{\mu}$ are homogeneous of degree $|\mu|$ and $\widetilde{E}_{\mu} \in \sP(\ell(\mu))^{+}.$ Note importantly, that for any composition $\mu$ (not necessarily reduced) and any $n \geq 0$, by shifting the terms of the sequence $(E_{\mu*0^m})_{m\geq 0}$ we see that $\widetilde{E}_{\mu *0^n} = \widetilde{E}_{\mu}$.

\begin{cor}\label{stabilizing of convergence of macdonald for partitions}
Let $\lambda$ be a partition with $\ell(\lambda) = n$ and $|\lambda| = N$. Then $\widetilde{E}_{\lambda}$ is determined by  $E_{\lambda*0^N} \in \sP_{n+N}^{+}$. That is to say, if 
$$E_{\lambda*0^{N}}(x_1,...,x_{n+N}) = c_1 x^{\mu^{(1)}}m_{\nu^{(1)}}[x_{n+1}+...+x_{n+N}] +...+ c_k x^{\mu^{(k)}}m_{\nu^{(k)}}[x_{n+1}+...+x_{n+N}]$$ then 
$$ \widetilde{E}_{\lambda} = c_1 x^{\mu^{(1)}}m_{\nu^{(1)}}[x_{n+1}+...] +...+ c_k x^{\mu^{(k)}}m_{\nu^{(k)}}[x_{n+1}+...] .$$
\end{cor}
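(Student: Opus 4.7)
The plan is to compare, term-by-term, the explicit finite-variable expansion of $E_{\lambda * 0^{N}}$ provided by Theorem \ref{HHL tail expansion} (with $m = N$) against the infinite-variable expansion of $\widetilde{E}_{\lambda}$ from Corollary/Definition \ref{convergence of macdonald}. Both formulas are sums indexed by partitions $\nu$ with $|\nu| \leq N$ and non-attacking fillings $\sigma \colon \lambda * 0^{\ell(\nu)} \to [n + \ell(\nu)]$, and the only differences are the factor $m_{\nu}[x_{n+1} + \ldots + x_{n+N}]$ versus $m_{\nu}[x_{n+1} + \ldots]$, and the coefficient $\Gamma^{(N)}(\widehat{\sigma})$ versus $\widetilde{\Gamma}(\widehat{\sigma})$. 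Since $\ell(\nu) \leq |\nu| \leq N$, every relevant $m_{\nu}$ is representable using only $x_{n+1}, \ldots, x_{n+N}$, so the two index sets genuinely agree.

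The essential step is to show $\Gamma^{(N)}(\widehat{\sigma}) = \widetilde{\Gamma}(\widehat{\sigma})$ for every non-attacking filling $\sigma$ appearing in the sum. Examining the two formulas, the sole discrepancy lies in the product over row $1$ boxes $u$ with $\widehat{\sigma}(u) \neq \widehat{\sigma}(d(u))$: in $\Gamma^{(N)}$ such a box contributes $\tfrac{1-t}{1 - q^{-(\mathleg(u)+1)} t^{a(u) + N + 1}}$, while in $\widetilde{\Gamma}$ it contributes $(1-t)$. I claim that when $\lambda$ is a partition this product is always empty, because every non-attacking $\sigma \colon \lambda * 0^{\ell(\nu)} \to [n + \ell(\nu)]$ must satisfy $\widehat{\sigma}((i, 1)) = i$ for all $i = 1, \ldots, n$.

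To prove this I would induct on $i$, crucially using that $\lambda$ being a partition forces all of columns $1, \ldots, n$ to have height at least one, so that row $1$ contains each of $(1,1), (2,1), \ldots, (n,1)$. For the base case, the box $(1, 1)$ attacks every row $0$ box $(j, 0)$ with $j > 1$; since $\widehat{\sigma}((j, 0)) = j$, the non-attacking condition forces $\widehat{\sigma}((1, 1)) = 1$. For the inductive step, $(i, 1)$ attacks $(1, 1), \ldots, (i-1, 1)$ by the same-row rule and every $(j, 0)$ with $j > i$ by the consecutive-rows rule, yielding $\widehat{\sigma}((i, 1)) \in \{1, \ldots, i\} \setminus \{1, \ldots, i-1\} = \{i\}$. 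Hence $\widehat{\sigma}((i, 1)) = i = \widehat{\sigma}(d((i, 1)))$, so no row $1$ box enters the product and $\Gamma^{(N)}(\widehat{\sigma}) = \widetilde{\Gamma}(\widehat{\sigma})$ as desired.

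Finally, grouping the inner sums by monomials $x^{\mu}$ in $x_1, \ldots, x_n$, the coefficient of $x^{\mu} m_{\nu}[x_{n+1} + \ldots + x_{n+N}]$ in $E_{\lambda * 0^{N}}$ matches the coefficient of $x^{\mu} m_{\nu}[x_{n+1} + \ldots]$ in $\widetilde{E}_{\lambda}$, which is exactly the claimed substitution identity. The main obstacle is the inductive determination of the row $1$ labelling; this is precisely where the partition hypothesis is essential, since for a general composition some columns would fail to contribute row $1$ boxes and the cascade of constraints would not pin down the remaining row $1$ labels.
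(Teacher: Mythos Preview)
Your proof is correct and follows the same approach as the paper: both argue that for a partition $\lambda$ the row~$1$ labels in any non-attacking filling are forced to be $1,2,\ldots,n$, so the row~$1$ product is empty and $\Gamma^{(N)}(\widehat{\sigma}) = \widetilde{\Gamma}(\widehat{\sigma})$, while $\ell(\nu)\le|\nu|\le N$ ensures all terms already appear at $m=N$. The only difference is that you supply the inductive proof of the row~$1$ claim explicitly, whereas the paper simply asserts it.
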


\begin{proof}
As $\lambda$ is a partition, row 1 of any non-attacking labelling of $\lambda$ must be 1,2,,...,$\ell(\lambda)$. Thus no boxes of $dg'(\lambda)$ in row 1 will have $\widehat{\sigma}(u) \neq \widehat{\sigma}(d(u))$ and so there will be no contributions from any of the terms of the form 
$$\prod_{\substack{ u \in dg'(\lambda) \\ \widehat{\sigma}(u) \neq \widehat{\sigma}(d(u)) \\ u ~ \text{row } 1 }} \left( \frac{1-t}{1-q^{-(\mathleg(u) +1)}t^{(a(u) + m + 1)}} \right).$$ Further, from  Corollary \ref{convergence of macdonald} it is clear that these are the only coefficients that depend on m in the limit. Also it follows that each term of the form $x^{\mu}m_{\nu}[x_{n+1}+\ldots]$ that occurs in the expansion of $\widetilde{E}_{\lambda}$ appears at least by the $m = N$ step of the limit. From these two facts it follows that the expansion of $\widetilde{E}_{\lambda}$ will match that of $E_{\lambda*0^{N}}(x_1,...,x_{n+N})$ up to truncating each $m_{\nu}[x_{n+1}+\ldots]$ to $m_{\nu}[x_{n+1}+\ldots + x_{n+N}]$ using $\pi_{n+N}.$

\end{proof}

\section{$\mathscr{Y}$-Weight Basis of $\mathscr{P}_{as}^{+}$}\label{weight basis section}

Given a family of commuting operators $\{ y_i : i \in I \}$ and a weight vector v we denote its weight by the function $\alpha: I \rightarrow \mathbb{Q}(q,t)$ such that $y_iv = \alpha(i)v.$ We sometimes denote $\alpha$ as $(\alpha_1,\alpha_2,\ldots).$

\subsection{The $\widetilde{E}_{\mu}$ are $\mathscr{Y}$-Weight Vectors}

In what follows, the classical spectral theory for non-symmetric Macdonald polynomials is used to demonstrate that the limit functions $\widetilde{E}_{\mu}$ are $\mathscr{Y}$-weight vectors. The below lemma is a simple application of this classical theory and basic properties of the $t$-adic topology on $\mathbb{Q}(q,t)$.

\begin{lem} \label{convergence of eigenvalues}
For a composition $\mu$ with $\ell(\mu) = n$ define $\alpha_{\mu}^{(m)}$ to be the $Y^{(n+m)}$-weight of $E_{\mu *0^m}$. Then in the $t$-adic topology on $\mathbb{Q}(q,t)$ the sequence $(t^{n+m}\alpha_{\mu}^{(m)}(i))_{m\geq 0}$ converges in m to some $\widetilde{\alpha}_{\mu}(i) \in \mathbb{Q}(q,t)$. In particular, $\widetilde{\alpha}_{\mu}(i) = 0$ for $i > n$ and for $1\leq i\leq n$ we have that $\widetilde{\alpha}_{\mu}(i) = 0$  exactly when $\mu_i = 0$.
\end{lem}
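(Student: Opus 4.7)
The plan is to invoke the classical explicit formula for the $Y^{(N)}$-eigenvalues on non-symmetric Macdonald polynomials in type $GL_N$. In the conventions of this paper (Definitions~\ref{defn1} and \ref{defn3}), this eigenvalue takes the form
\[
\alpha_\nu(i) \;=\; q^{-\nu_i}\, t^{\,r_\nu(i)\,-\,N},
\]
where $\nu$ is a composition of length $N$ and $r_\nu(i)\in\{1,\ldots,N\}$ is the rank of position $i$ among the parts of $\nu$, with ties broken by position via a specific tiebreaker. Substituting $\nu = \mu * 0^m$ and $N = n+m$ cleans this up to
\[
t^{n+m}\, \alpha^{(m)}_\mu(i) \;=\; q^{-\mu^{(m)}_i}\, t^{\,r_{\mu * 0^m}(i)},
\]
where $\mu^{(m)}_i$ denotes the $i$-th entry of $\mu * 0^m$. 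The whole problem therefore reduces to analyzing the behavior of $r_{\mu * 0^m}(i)$ as $m$ grows, with $i$ held fixed.

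Split into three cases. If $i \leq n$ and $\mu_i > 0$, then $\nu_i = \mu_i$ is strictly greater than every tail zero, so tail zeros cannot contribute to any count defining $r_{\mu * 0^m}(i)$; the rank is eventually independent of $m$, and the limit $\widetilde{\alpha}_\mu(i)$ is the nonzero element $q^{-\mu_i}\, t^{c_{\mu,i}}$ of $\mathbb{Q}(q,t)$, where $c_{\mu,i}$ is the stable value of $r_{\mu * 0^m}(i) - (n+m)$ shifted by $n+m$. If $i \leq n$ and $\mu_i = 0$, then $\nu_i = 0$ ties with every tail zero, and by the tiebreaker each added tail zero shifts $r_{\mu * 0^m}(i)$ by exactly $1$; hence $r_{\mu * 0^m}(i) = m + c$ for some $m$-independent $c$. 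For $i > n$, an analogous argument with $\nu_i = 0$ and the offset $i - n$ accounted for again yields $r_{\mu * 0^m}(i) = m + c'$. In both of the latter cases the $q$-factor is $1$ (since $\mu^{(m)}_i = 0$), so $t^{n+m}\, \alpha^{(m)}_\mu(i) = t^{m + \mathrm{const}}$, which tends to $0$ in the $t$-adic topology on $\mathbb{Q}(q,t)$, giving $\widetilde{\alpha}_\mu(i) = 0$ as required.

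The only substantive hurdle is pinning down the tiebreaking convention in $r_\nu(i)$ consistently with the DAHA relations of Definition~\ref{defn1}: it is the direction of the tiebreaker that both prevents tail zeros from entering the count in Case~1 and guarantees each tail zero contributes exactly once in Cases~2 and 3. Once that convention is fixed, the argument is a purely combinatorial bookkeeping of ranks, and no further calculation is required beyond verifying that $t^{m + \mathrm{const}}$ converges to $0$ in the $t$-adic topology---which is immediate.
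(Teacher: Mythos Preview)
Your approach is essentially the same as the paper's: invoke the explicit eigenvalue formula from classical DAHA theory, then split into the three cases $i\le n$ with $\mu_i\neq 0$, $i\le n$ with $\mu_i=0$, and $i>n$, observing that the $t$-exponent stabilizes in the first case and grows linearly in $m$ in the other two. The paper carries this out with the explicit statistic $\beta_\mu(i)=\#\{j\le i:\mu_j\le\mu_i\}+\#\{j>i:\mu_i>\mu_j\}$ rather than an unspecified rank $r_\nu(i)$, which makes the tiebreaker issue disappear; also note that in this paper's conventions the eigenvalue is $q^{\mu_i}t^{1-\beta_\mu(i)}$, so your $q^{-\nu_i}$ has the wrong sign on the $q$-exponent (this does not affect the convergence argument, since the $q$-factor is nonzero regardless).
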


\begin{proof}
Take $\mu = (\mu_1,\ldots,\mu_n)$. From classical double affine Hecke algebra theory \cite{C_2001}  we have $\alpha_{\mu}^{(0)}(i) = q^{\mu_i}t^{1-\beta_{\mu}(i)}$ where 
$$\beta_{\mu}(i) := \#\{j: 1\leq j \leq i ~, \mu_j \leq \mu_i\} + \#\{j: i < j \leq n ~, \mu_i > \mu_j\}.$$
If we calculate $\beta_{\mu*0^m}(i)$ directly it follows then that

\[ t^{n+m}\alpha_{\mu}^{(m)}(i) = 
    \begin{cases}
    q^{\mu_i}t^{n+m+1-(\beta_{\mu}(i) + m \mathbbm{1}(\mu_i \neq 0))} = t^n\alpha_{\mu}^{(0)}(i) & i\leq n, \mu_i \neq 0\\
    q^{\mu_i}t^{n+m+1-(\beta_{\mu}(i) + m \mathbbm{1}(\mu_i \neq 0))} = t^{n+m}\alpha_{\mu}^{(0)}(i) & i\leq n, \mu_i = 0 \\
    t^{n+m+1-(\#\{j:\mu_{j} = ~0\} + i-n)} = t^{\#\{j:\mu_j \neq ~0\}}t^{m+1-(i-n)} & i > n .
     \end{cases}
\]

Lastly, by taking the limit $m\rightarrow \infty$ we get the result.
\end{proof}

For a composition $\mu$ define the weight $\widetilde{\alpha}_{\mu}$ using the formula in Lemma \ref{convergence of eigenvalues} for the list of scalars  $\widetilde{\alpha}_{\mu}(i)$ for $i \in \mathbb{N}$.

\begin{lem} \label{weight vector if nonzero parts}
For a composition $\mu = (\mu_1,\ldots,\mu_n)$ with $\mu_i \neq 0$ for $1\leq i \leq n$, $\widetilde{E}_{\mu}$ is a $\y$-weight vector with weight $\widetilde{\alpha}_{\mu}$. 
\end{lem}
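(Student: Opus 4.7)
The plan is to reduce the eigenvector claim for $\widetilde{E}_\mu$ to the classical identity $Y_i^{(n+m)} E_{\mu*0^m} = \alpha_\mu^{(m)}(i)\,E_{\mu*0^m}$ and then pass to the Ion--Wu limit. Since $\widetilde{E}_\mu = \lim_m E_{\mu*0^m}$ by Corollary/Definition~\ref{convergence of macdonald} and $\mathscr{Y}_i = \lim_m \widetilde{Y}_i^{(n+m)}$ by Theorem~\ref{deformed Y's converge}, Proposition~6.21 of \cite{Ion_2022} gives
$$\mathscr{Y}_i(\widetilde{E}_\mu) \;=\; \lim_{m} \widetilde{Y}_i^{(n+m)}\bigl(E_{\mu*0^m}\bigr).$$
For $i > n$ no computation is needed: $\widetilde{E}_\mu \in \sP(n)^{+}$ and the operators $\mathscr{Y}_{n+1},\mathscr{Y}_{n+2},\ldots$ annihilate $\sP(n)^{+}$, which matches $\widetilde{\alpha}_\mu(i) = 0$ coming from Lemma~\ref{convergence of eigenvalues}. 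The real content is the case $1 \leq i \leq n$.

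The central structural input I would establish first is the divisibility
$$E_{\mu*0^m} \;\in\; x_1 x_2 \cdots x_n\,\sP_{n+m}^{+},$$
which is precisely where the hypothesis that each $\mu_i \neq 0$ is used. Reading this off of the HHL formula (Theorem~\ref{HHL}): the row~$1$ boxes of $dg'(\mu*0^m)$ occupy columns $1,\ldots,n$; each box $(i,1)$ must carry a label in $\{1,\ldots,i\}$ since it attacks the row~$0$ boxes $(i+1,0),\ldots,(n+m,0)$; and all row~$1$ boxes attack each other, so their labels form a permutation of $\{1,\ldots,n\}$. A short direct check with the Demazure--Lusztig formula shows that each $T_j$ preserves $x_k$-divisibility for $k \notin \{j,j+1\}$ and $x_j x_{j+1}$-divisibility at the same time, so $T_1,\ldots,T_{n-1}$ all preserve divisibility by $x_1\cdots x_n$. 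Iterating Definition~\ref{defn of deformed y's} gives
$$\widetilde{Y}_i^{(n+m)} \;=\; t^{-(i-1)}\,T_{i-1}\cdots T_1\,\widetilde{Y}_1^{(n+m)}\,T_1\cdots T_{i-1}.$$
When $i \leq n$, the element $T_1\cdots T_{i-1}(E_{\mu*0^m})$ still lies in $x_1 \cdots x_n\,\sP_{n+m}^{+} \subset x_1\,\sP_{n+m}^{+}$, where the Ion--Wu identity $\widetilde{Y}_1^{(n+m)} X_1 = t^{n+m} Y_1^{(n+m)} X_1$ lets us substitute $\widetilde{Y}_1^{(n+m)} = t^{n+m}\,Y_1^{(n+m)}$. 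Matching this with the parallel recurrence $Y_i^{(n+m)} = t^{-(i-1)}T_{i-1}\cdots T_1\,Y_1^{(n+m)}\,T_1\cdots T_{i-1}$ yields
$$\widetilde{Y}_i^{(n+m)}\bigl(E_{\mu*0^m}\bigr) \;=\; t^{n+m}\,Y_i^{(n+m)}\bigl(E_{\mu*0^m}\bigr) \;=\; t^{n+m}\,\alpha_\mu^{(m)}(i)\,E_{\mu*0^m}.$$

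Taking $m \to \infty$ and invoking Lemma~\ref{convergence of eigenvalues} for convergence of the scalars $t^{n+m}\alpha_\mu^{(m)}(i)$ delivers $\mathscr{Y}_i(\widetilde{E}_\mu) = \widetilde{\alpha}_\mu(i)\,\widetilde{E}_\mu$ for each $1 \leq i \leq n$, which together with the $i > n$ case completes the proof. The main obstacle is the divisibility lemma above: if any $\mu_i$ were allowed to vanish, the row~$1$ argument would fail, $T_1\cdots T_{i-1}(E_{\mu*0^m})$ might leave $x_1\,\sP_{n+m}^{+}$, and the $\rho$-truncation hidden inside $\widetilde{Y}_1^{(n+m)}$ would prevent us from transferring the classical eigenvalue identity across the Demazure--Lusztig operators. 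Once that divisibility is secured, all remaining steps are formal manipulations inside $\mathscr{H}_{n+m}$.
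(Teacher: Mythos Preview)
Your argument is correct and lands on the same core observation as the paper---that the projection $\rho$ hidden inside the deformed operators acts trivially on the relevant polynomials because of $x_1$-divisibility forced by the HHL row~$1$ combinatorics---but you reach that observation by a different and somewhat more elementary route. The paper unpacks $\mathscr{Y}_r$ as $\lim_k t^k T_{r-1}\cdots T_1\,\rho\,T_1^{-1}\cdots T_{r-1}^{-1} Y_r^{(k)}\pi_k$ and then proves $x_1\mid T_1^{-1}\cdots T_{r-1}^{-1}E_{\mu*0^m}$ by rewriting this via $\omega_{n+m}^{-1}$ and using the DAHA commutation $\omega_{n+m}^{-1}T_{n+m-1}^{-1}\cdots T_r^{-1}X_r = qt^{-(n+m-r)}X_1\omega_{n+m}^{-1}T_{n+m-1}\cdots T_r$ to transport $x_r$-divisibility to $x_1$-divisibility. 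You instead establish the stronger divisibility $x_1\cdots x_n\mid E_{\mu*0^m}$ up front and observe by a direct Demazure--Lusztig check that the positive operators $T_1,\ldots,T_{n-1}$ preserve the ideal $x_1\cdots x_n\,\sP_{n+m}^{+}$, so that $\widetilde{Y}_1^{(n+m)}$ coincides with $t^{n+m}Y_1^{(n+m)}$ on $T_1\cdots T_{i-1}E_{\mu*0^m}$ and hence $\widetilde{Y}_i^{(n+m)}E_{\mu*0^m}=t^{n+m}Y_i^{(n+m)}E_{\mu*0^m}$. Your route avoids inverse Hecke generators and the $\omega$ element entirely; the paper's route leans more on the internal DAHA relations. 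One small point: the identity $\mathscr{Y}_i=\lim_m\widetilde{Y}_i^{(m)}$ for $i\geq 2$ is not literally the statement of Theorem~\ref{deformed Y's converge} (which only asserts it for $i=1$ and then \emph{defines} $\mathscr{Y}_i$ by the $T$-recurrence), but it follows immediately from that theorem together with the convergence of the $T_j$ and composition of limits, so this is only a citation nicety rather than a gap.
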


\begin{proof}
Fix any $r \in \mathbb{N}$. We start by rewriting the operator $\y_r$ explicitly in terms of the limit definition of $\y_1$.

\begin{align*}
\y_r &= t^{-(r-1)}T_{r-1}\cdots T_1 \y_1 T_1 \cdots T_{r-1}\\
&= t^{-(r-1)}T_{r-1}\cdots T_1 \lim_{k} t^{k} \rho \omega_k ^{-1} T_{k-1}^{-1}\cdots T_{1}^{-1}T_1\cdots T_{r-1} \pi_k \\
&= \lim_{k} t^{k} T_{r-1}\cdots T_1 \rho t^{-(r-1)} \omega_k ^{-1} T_{k-1}^{-1}\cdots T_{r}^{-1}\pi_k \\
&= \lim_{k} t^{k} T_{r-1}\cdots T_1 \rho T_1^{-1} \cdots T_{r-1}^{-1} t^{-(r-1)} T_{r-1} \cdots T_{1}  \omega_k ^{-1} T_{k-1}^{-1}\cdots T_{r}^{-1} \pi_k \\
&= \lim_{k} t^{k} T_{r-1}\cdots T_1 \rho T_1^{-1} \cdots T_{r-1}^{-1} Y_{r}^{(k)}\pi_k. \\
\end{align*}

Applying $\y_r$ to $\widetilde{E}_{\mu}$ we see by taking $k = n+m \geq n$ and shifting the indices that

\begin{align*}
\y_r(\widetilde{E}_{\mu}) &= \lim_{m} t^{n+m} T_{r-1}\cdots T_1 \rho T_1^{-1} \cdots T_{r-1}^{-1} Y_{r}^{(n+m)}(E_{\mu*0^{m}}) \\
&= \lim_{m} T_{r-1}\cdots T_1 \rho T_1^{-1} \cdots T_{r-1}^{-1}  t^{n+m} \alpha_{\mu}^{(m)}(r) E_{\mu*0^{m}} \\
\end{align*}
and by Lemma \ref{convergence of eigenvalues} this converges to 
$$ \y_r( \widetilde{E}_{\mu}) = \widetilde{\alpha}_{\mu}(r) (T_{r-1}\cdots T_1 \rho T_1^{-1}\cdots T_{r-1}^{-1}) \widetilde{E}_{\mu} .$$ 

Importantly, we have implicitly used the fact that both of the sequences $(E_{\mu*0^{m}})_{m}$ and $(\alpha_{\mu}^{(m)}(r))_{m}$ converge, that the operator $T_{r-1}\cdots T_1 \rho T_1^{-1}\cdots T_{r-1}^{-1}$ commutes with the quotient maps $\pi_k: \sP_{k+1} \rightarrow \sP_{k}$ for $k> r$, and Proposition 6.21 in \cite{Ion_2022}.
We will show that the right side is $\widetilde{\alpha}_{\mu}(r)\widetilde{E}_{\mu}$.
As $\widetilde{\alpha}_{\mu}(r) = 0$ for $r > n$ by Lemma \ref{convergence of eigenvalues} we reduce to the sub case $r \leq n$. Fix $r \leq n$. 
If we could show that $x_1$ divides $ T_1^{-1}\cdots T_{r-1}^{-1}\widetilde{E}_{\mu}$ then we would have $$\rho(T_1^{-1}\cdots T_{r-1}^{-1}\widetilde{E}_{\mu}) = T_1^{-1}\cdots T_{r-1}^{-1}\widetilde{E}_{\mu} $$
implying that 
\begin{align*}
\y_r(\widetilde{E}_{\mu}) &= \widetilde{\alpha}_{\mu}(r) (T_{r-1}\cdots T_1 \rho T_1^{-1}\cdots T_{r-1}^{-1}) \widetilde{E}_{\mu} \\
&= \widetilde{\alpha}_{\mu}(r) T_{r-1}\cdots T_1 T_1^{-1}\cdots T_{r-1}^{-1}) \widetilde{E}_{\mu} \\
&= \widetilde{\alpha}_{\mu}(r)\widetilde{E}_{\mu}
\end{align*}
as desired.
To show that $x_1 | T_1^{-1}\cdots T_{r-1}^{-1}\widetilde{E}_{\mu}$ it suffices to show that for all $m \geq 0$,
$x_1 | T_1^{-1}\cdots T_{r-1}^{-1} E_{\mu *0^m}$. To this end fix $m \geq 0$. We have that 

\begin{align*}
\alpha_{\mu}^{(m)}(r)E_{\mu*0^m}&= Y_r^{(n+m)}(E_{\mu*0^m}) \\
&= t^{n+m-r+1}T_{r-1}\cdots T_1 \omega_{n+m}^{-1} T_{n+m-1}^{-1}\cdots T_r^{-1} E_{\mu*0^m}.\\
\end{align*}

Since $\alpha_{\mu}^{(m)}(r) \neq 0$ we can have $\frac{1}{\alpha_{\mu}^{(m)}(r)} T_1 ^{-1}\cdots T_{r-1}^{-1}$ act on both sides of the above to get 

\begin{align*}
T_1^{-1}\cdots T_{r-1}^{-1} E_{\mu*0^m} &= \frac{t^{n+m-r+1}}{\alpha_{\mu}^{(m)}(r)}\omega_{n+m}^{-1} T_{n+m-1}^{-1}\cdots T_r^{-1} E_{\mu*0^m}.
\end{align*}
 By HHL any non-attacking labelling of $\mu* 0^m$ will have row 1 diagram labels given by $\{1,2,\ldots,n\}$ so in particular $x_r$ divides $E_{\mu*0^m}$ for all $m > 0$. Lastly, 
 \begin{align*}
\omega_{n+m}^{-1} T_{n+m-1}^{-1}\cdots T_r^{-1}X_r &= \omega_{n+m}^{-1} t^{-(n+m-r)}X_{n+m}T_{n+m-1}\cdots T_r \\
&= qt^{-(n+m-r)} X_1 \omega_{n+m}^{-1} T_{n+m-1}\cdots T_r.\\
 \end{align*}
Thus $x_1$ divides $T_1^{-1}\cdots T_{r-1}^{-1} E_{\mu*0^m}$ for all $m \geq 0$ showing the result.

\end{proof}

Now we consider the general situation where the composition $\mu$ can have some parts which are 0. We can extend the above result, Lemma \ref{weight vector if nonzero parts}, by a straight-forward argument using intertwiner theory from the study of affine Hecke algebras.

\begin{thm} \label{weight vector for all}
For all compositions $\mu$, $\widetilde{E}_{\mu}$ is a $\y$-weight vector with weight $\widetilde{\alpha}_{\mu}.$
\end{thm}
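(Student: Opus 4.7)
The plan is to reduce to Lemma \ref{weight vector if nonzero parts} via an intertwiner argument. Let $\nu$ denote the composition obtained from $\mu$ by deleting all zero parts, and set $\mu^{(0)} = \nu * 0^{\ell(\mu) - \ell(\nu)}$, so that $\mu^{(0)}$ places the nonzero parts of $\mu$ at the front followed by trailing zeros. By the remark following Corollary \ref{convergence of macdonald}, $\widetilde{E}_{\mu^{(0)}} = \widetilde{E}_{\nu}$, which by Lemma \ref{weight vector if nonzero parts} is a $\y$-weight vector of weight $\widetilde{\alpha}_{\nu}$. The composition $\mu$ is related to $\mu^{(0)}$ by a finite sequence of adjacent transpositions $s_{i_1}, \ldots, s_{i_N}$, each of which swaps a positive entry with an adjacent zero. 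The induction is on $N$, with base case $N = 0$ handled above.

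For the inductive step, introduce the stable-limit intertwiner $\widetilde{\varphi}_i := [T_i, \y_i] = T_i \y_i - \y_i T_i$ acting on $\sP_{as}^{+}$. Using the commutativity $\y_a \y_b = \y_b \y_a$ of Theorem \ref{deformed Y's converge}, the definition $T_i \y_i T_i = t \y_{i+1}$, and the commutations $T_i \y_k = \y_k T_i$ for $k \notin \{i, i+1\}$ inherited in the limit from the defining relations of $\mathscr{H}^{+}$, the same formal computation as in the finite DAHA yields
\begin{equation*}
\widetilde{\varphi}_i \, \y_j \;=\; \y_{s_i(j)} \, \widetilde{\varphi}_i \qquad (i, j \geq 1).
\end{equation*}
Hence $\widetilde{\varphi}_i$ carries $\y$-weight vectors of weight $\alpha$ to $\y$-weight vectors of weight $s_i(\alpha)$.

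Given the inductive hypothesis that $\widetilde{E}_{\mu'}$ is a $\y$-weight vector, it remains to show $\widetilde{\varphi}_i \widetilde{E}_{\mu'} = \widetilde{c} \cdot \widetilde{E}_{s_i(\mu')}$ for some nonzero $\widetilde{c}$. The key input is the classical finite identity $\varphi_i^{(n+m)} E_{\mu' * 0^m} = c^{(m)} E_{s_i(\mu') * 0^m}$ in $\mathscr{H}_{n+m}$, valid whenever $\alpha_{\mu' * 0^m}(i) \neq \alpha_{\mu' * 0^m}(i+1)$, with $c^{(m)}$ an explicit nonzero scalar in the Cherednik eigenvalues; since $\mu'_i$ and $\mu'_{i+1}$ differ for our choice of $i$, this condition is automatic. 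Invoking Proposition~6.21 of \cite{Ion_2022} together with the definition $\y_i = \lim_m \widetilde{Y}_i^{(n+m)}$, and accounting for the $t^{n+m}$ rescaling relating $\widetilde{Y}_i^{(n+m)}$ to $Y_i^{(n+m)}$ on the relevant subspace, the finite identity passes to the limit as $\widetilde{\varphi}_i \widetilde{E}_{\mu'} = \widetilde{c} \cdot \widetilde{E}_{s_i(\mu')}$ with $\widetilde{c} \neq 0$. By the intertwining property, $\widetilde{E}_{s_i(\mu')}$ is then itself a $\y$-weight vector, completing the induction. The resulting weight equals $\widetilde{\alpha}_\mu$ by the manifest $\mathfrak{S}_n$-equivariance of the formula in Lemma \ref{convergence of eigenvalues}. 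The main technical obstacle is this last limit computation: precisely accounting for the $t^{n+m}$ rescaling so that $c^{(m)}$ has a well-defined, nonzero limit and the classical intertwiner identity lifts cleanly to the stable-limit identity for $\widetilde{\varphi}_i$.
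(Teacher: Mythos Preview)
Your overall architecture matches the paper's: reduce to Lemma~\ref{weight vector if nonzero parts} via the trailing-zero identity $\widetilde{E}_{\nu*0^m}=\widetilde{E}_{\nu}$, then induct along simple transpositions using the intertwiner $\widetilde{\varphi}_i = T_i\y_i - \y_iT_i$. Your choice of base composition (nonzero parts in their original order) versus the paper's (fully sorted partition) is immaterial, and your restriction to positive--zero swaps is a valid chain of Bruhat covers reaching $\mu$.

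The gap is in the limit step you yourself flag as the ``main technical obstacle.'' You propose to pass the finite identity $\varphi_i^{(n+m)}E_{\mu'*0^m} = c^{(m)}E_{s_i(\mu')*0^m}$ to the limit by invoking $\y_i = \lim_m \widetilde{Y}_i^{(n+m)}$ together with the rescaling $\widetilde{Y}_i^{(n+m)} = t^{n+m}Y_i^{(n+m)}$ ``on the relevant subspace.'' But that rescaling identity is only available on $x_i\sP_{n+m}^{+}$ (this is exactly what the divisibility argument in Lemma~\ref{weight vector if nonzero parts} establishes, and it genuinely uses that all parts are nonzero). Once your inductive composition $\mu'$ has zero parts, $E_{\mu'*0^m}$ is not divisible by the relevant $x_i$, the projection $\rho$ in $\widetilde{Y}_i^{(n+m)}$ intervenes nontrivially, and there is no clean operator-level comparison of $\widetilde{\varphi}_i$ with $t^{n+m}\varphi_i^{(n+m)}$ on the vectors in play.

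The paper avoids this entirely by using the inductive hypothesis \emph{before} taking any limit: since $\widetilde{E}_{\mu'}$ is already known to be a $\y$-weight vector, one expands
\[
\widetilde{\varphi}_i\widetilde{E}_{\mu'} \;=\; \bigl(\widetilde{\alpha}_{\mu'}(i)-\widetilde{\alpha}_{\mu'}(i+1)\bigr)\,T_i\widetilde{E}_{\mu'} \;+\; (1-t)\,\widetilde{\alpha}_{\mu'}(i+1)\,\widetilde{E}_{\mu'},
\]
with no $\y$'s remaining. Each term is now a limit of an elementary sequence ($T_i$ commutes with $\pi_k$, and the scalars converge by Lemma~\ref{convergence of eigenvalues}), so the whole expression equals
\[
\lim_m\, t^{n+m}\Bigl[\bigl(\alpha^{(m)}(i)-\alpha^{(m)}(i+1)\bigr)T_iE_{\mu'*0^m} + (1-t)\alpha^{(m)}(i+1)E_{\mu'*0^m}\Bigr]
\;=\; \lim_m t^{n+m}\varphi_i^{(n+m)}E_{\mu'*0^m},
\]
and only \emph{now} does the finite intertwiner identity give $(\widetilde{\alpha}_{\mu'}(i)-\widetilde{\alpha}_{\mu'}(i+1))\,\widetilde{E}_{s_i(\mu')}$. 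The $\rho$ issue never arises because the limit Cherednik operators have already been replaced by scalars. If you rewrite your inductive step in this order, your proof goes through and coincides with the paper's.
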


\begin{proof}
 Lemma \ref{weight vector if nonzero parts} shows that this statement holds for any composition with all parts nonzero. Fix a composition $\mu$ with length $n$. We know that by sorting in decreasing order that $\mu$ can be written as a permutation of a composition of the form $\nu * 0^m$ for a partition $\nu$ and some $m \geq 0$. From the definition of Bruhat order it follows that $\nu*0^m$ will be the minimal element out of all of its distinct permutations, including $\mu$. Necessarily, this finite subposet generated by the permutations of $\nu*0^m$ is isomorphic to the Bruhat ordering on the coset space $\mathfrak{S}_n/\mathfrak{S}_{\kappa}$ where $\mathfrak{S}_{\kappa}$ is the Young subgroup of $\mathfrak{S}_n$ corresponding to the stabilizer of $\nu*0^m$. Hence, it suffices to show inductively that for any composition $\beta$ with $\nu*0^m \leq \beta < s_i(\beta) \leq \mu$, if $\widetilde{E}_{\beta}$ satisfies the theorem then so will $\widetilde{E}_{s_i(\beta)}$. As $\mu$ is finitely many covering elements away in Bruhat from $\nu*0^m$ this induction will indeed terminate after finitely many steps.
 
 Using the intertwiner operators from affine Hecke algebra theory, given by $\varphi_i = T_i\y_i-\y_iT_i$ in this context, we only need to show that for any composition $\beta$ with $\nu*0^m \leq \beta < s_i(\beta) \leq \mu$, $$\varphi_i \widetilde{E}_{\beta} = (\widetilde{\alpha}_{\beta}(i) - \widetilde{\alpha}_{\beta}(i+1))\widetilde{E}_{s_i(\beta)}.$$ Suppose the theorem holds for some $\beta$ with $\nu*0^m \leq \beta < s_i(\beta) \leq \mu$. Then we have the following:
\begin{align*}
    \varphi_{i}\widetilde{E}_{\beta} &= (T_i(\y_i - \y_{i+1}) + (1-t)\y_{i+1})\widetilde{E}_{\beta} \\
    &= (\widetilde{\alpha}_{\beta}(i)- \widetilde{\alpha}_{\beta}(i+1))T_i\widetilde{E}_{\beta} + (1-t)\widetilde{\alpha}_{\beta}(i+1)\widetilde{E}_{\beta} \\
    &= \lim_{m} (t^{n+m}\alpha_{\beta}^{(m)}(i)- t^{n+m}\alpha_{\beta}^{(m)}(i+1))T_iE_{\beta *0^m} + (1-t)t^{n+m}\alpha_{\beta}^{(m)}(i+1)E_{\beta *0^m} \\
    &= \lim_{m}(t^{n+m}\alpha_{\beta}^{(m)}(i)- t^{n+m}\alpha_{\beta}^{(m)}(i+1))E_{s_i(\beta) *0^m}\\
    &= (\widetilde{\alpha}_{\beta}(i) - \widetilde{\alpha}_{\beta}(i+1))\widetilde{E}_{s_i(\beta)}.\\
\end{align*}

\end{proof}

As an immediate consequence of the proof of Theorem \ref{weight vector for all} we have the following. 

\begin{cor}\label{action of T on E mu}
    Let $\mu$ be a composition and $i \geq 1$ such that $s_i(\mu) > \mu$. Then 
    $$\widetilde{E}_{s_i(\mu)} = \left(T_i + \frac{(1-t)\widetilde{\alpha}_{\mu}(i+1)}{\widetilde{\alpha}_{\mu}(i) - \widetilde{\alpha}_{\mu}(i+1)}\right) \widetilde{E}_{\mu}.$$
\end{cor}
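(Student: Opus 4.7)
The plan is to read off the corollary directly from the intertwiner computation that appeared in the proof of Theorem \ref{weight vector for all}, and then solve algebraically for $\widetilde{E}_{s_i(\mu)}$.

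First I would recall that the $^{+}$stable-limit intertwiner $\varphi_i = T_i \y_i - \y_i T_i$ can be rewritten, using the Hecke quadratic relation $(T_i-1)(T_i+t)=0$ together with the cross-commutation $T_i \y_{i+1} T_i = t \y_i + \ldots$ that is inherited from finite DAHA (and which Ion--Wu establish after passing to the limit), in the familiar form
\[
\varphi_i \;=\; T_i(\y_i - \y_{i+1}) \;+\; (1-t)\y_{i+1}.
\]
This is the same identity used in the proof of Theorem \ref{weight vector for all}, and it lifts the classical finite-variable intertwiner verbatim.

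Next I would apply $\varphi_i$ to $\widetilde{E}_\mu$. Since $\widetilde{E}_\mu$ is a simultaneous $\y$-eigenvector with weight $\widetilde{\alpha}_\mu$ by Theorem \ref{weight vector for all}, the two terms above evaluate cleanly, giving
\[
\varphi_i \widetilde{E}_\mu \;=\; \bigl(\widetilde{\alpha}_\mu(i) - \widetilde{\alpha}_\mu(i+1)\bigr) T_i \widetilde{E}_\mu \;+\; (1-t)\widetilde{\alpha}_\mu(i+1) \widetilde{E}_\mu.
\]
On the other hand, the display in the proof of Theorem \ref{weight vector for all} already identified this same expression with $\bigl(\widetilde{\alpha}_\mu(i) - \widetilde{\alpha}_\mu(i+1)\bigr)\widetilde{E}_{s_i(\mu)}$, via the limiting calculation that pushes the finite-variable intertwiner identity $\varphi_i E_{\beta*0^m} = (\alpha_\beta^{(m)}(i)-\alpha_\beta^{(m)}(i+1)) E_{s_i(\beta)*0^m}$ through the convergence lemma of Ion--Wu.

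The only real obstacle is the division step: to solve for $\widetilde{E}_{s_i(\mu)}$ I need $\widetilde{\alpha}_\mu(i) \neq \widetilde{\alpha}_\mu(i+1)$. I would verify this by case analysis using the explicit formula for $\widetilde{\alpha}_\mu$ from Lemma \ref{convergence of eigenvalues} together with the hypothesis $s_i(\mu)>\mu$, which forces $\mu_i<\mu_{i+1}$. If both $\mu_i,\mu_{i+1}$ are nonzero, the weights are of the form $q^{\mu_j}t^{1-\beta_\mu(j)}$ and differ because $\mu_i\neq \mu_{i+1}$ and $q,t$ are generic; if $\mu_i=0$ while $\mu_{i+1}\neq 0$, then $\widetilde{\alpha}_\mu(i)=0\neq \widetilde{\alpha}_\mu(i+1)$ by Lemma \ref{convergence of eigenvalues}. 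With non-vanishing of the denominator established, dividing both sides of the intertwiner equation yields exactly
\[
\widetilde{E}_{s_i(\mu)} \;=\; \left( T_i + \frac{(1-t)\widetilde{\alpha}_\mu(i+1)}{\widetilde{\alpha}_\mu(i) - \widetilde{\alpha}_\mu(i+1)} \right)\widetilde{E}_\mu,
\]
completing the proof.
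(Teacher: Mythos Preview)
Your proposal is correct and follows essentially the same approach as the paper, which simply records the corollary as an immediate consequence of the intertwiner computation in the proof of Theorem \ref{weight vector for all}. Your explicit verification that $\widetilde{\alpha}_{\mu}(i)\neq\widetilde{\alpha}_{\mu}(i+1)$ under the hypothesis $s_i(\mu)>\mu$ is a detail the paper leaves implicit, but the argument is otherwise identical.
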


We have shown in Theorem \ref{weight vector for all} there is an explicit collection of $\mathscr{Y}$-weight vectors $\widetilde{E}_{\mu}$ in $\mathscr{P}_{as}^{+}$ arising as the limits of non-symmetric Macdonald polynomials $E_{\mu * 0^m}$. Unfortunately, these $\widetilde{E}_{\mu}$ do not span $\mathscr{P}_{as}^{+}$. To see this note that one cannot write a non-constant symmetric function as a linear combination of the $\widetilde{E}_{\mu}$. However, in the below work we build a full $\y$-weight basis of $\sP_{as}^{+}$. 

\subsection{Constructing a Full $\y$-Weight Basis}

\subsubsection{Defining the Stable-Limit Non-symmetric Macdonald Functions}
 To complete our construction of a full weight basis of $\mathscr{P}_{as}^{+}$ we will need the $\partial_{-}^{(k)}$ operators from Ion and Wu. These operators are, up to a change of variables and plethsym, the $d_{-}$ operators from Carlsson and Mellit's standard $\mathbb{A}_{t,q}$ representation. 

\begin{defn}\cite{Ion_2022} \label{defn7}
 Define the operator $\partial_{-}^{(k)}: \mathscr{P}(k)^{+} \rightarrow \mathscr{P}(k-1)^{+}$ to be the $\mathscr{P}_{k-1}^{+}$-linear map which acts on elements of the form $x_k^nF[x_{k+1}+x_{k+2}+\ldots]$ for $F \in \Lambda$ and $n \geq 0$ as 
 $$\partial_{-}^{(k)}(x_k^nF[x_{k+1}+x_{k+2}+\ldots]) = \mathscr{B}_n(F)[x_k + x_{k+1} +\ldots]. $$
 Here the $\mathscr{B}_n$ are the Jing operators which serve as creation operators for Hall-Littlewood symmetric functions $\mathcal{P}_{\lambda}$ given explicitly by the following plethystic formula:
 $$\mathscr{B}_n(F)[X] = \langle z^n \rangle  F[X-z^{-1}]Exp[(1-t)zX] .$$
 
\end{defn}
 Importantly, the $\partial_{-}^{(k)}$ operators do not come from the $\mathscr{H}^{+}$ action itself. Note that the $\partial_{-}^{(k)}$ operators are homogeneous by construction. 

We will require the useful alternative expression for the $\partial_{-}^{(k)}$ operators which can be found in \cite{Ion_2022}. Recall the notation $\mathfrak{c}_{y}$ from Definition \ref{A qt defns}.

\begin{lem}\label{lowering operator alternative description}
Let $\tau_k$ denote the alphabet shift $\mathfrak{X}_k \rightarrow \mathfrak{X}_{k-1}$ acting on symmetric functions where $\mathfrak{X}_i : = x_{i+1}+x_{i+2}+...$. Then for $f \in \sP_{k}$ and $F \in \Lambda$
$$ \partial_{-}^{(k)}(f(x_1,\dots x_k) F[\mathfrak{X}_k]) = \tau_k \mathfrak{c}_{x_k} f(x_1,...,x_k)F[\mathfrak{X}_k -x_k]Exp[-(t-1)x_k^{-1}\mathfrak{X}_{k}].$$
\end{lem}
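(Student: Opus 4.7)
The plan is to verify both sides of the identity agree on the $\sP_{k-1}^{+}$-module generators $\{x_k^n F[\mathfrak{X}_k] : n \geq 0, F \in \Lambda\}$ of $\sP(k)^{+}$. Both sides are manifestly $\sP_{k-1}^{+}$-linear in the first argument: the defining formula from Definition \ref{defn7} is $\sP_{k-1}^{+}$-linear by construction, while on the right-hand side any factor depending only on $x_1,\ldots,x_{k-1}$ commutes past both $\mathfrak{c}_{x_k}$ (which only extracts the constant term in $x_k$) and $\tau_k$ (which only acts on the tail alphabet $\mathfrak{X}_k$). So it suffices to check the case $f = x_k^n$.

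For $f = x_k^n$, the right-hand side becomes
$$\tau_k \mathfrak{c}_{x_k}\bigl( x_k^n F[\mathfrak{X}_k - x_k]\, Exp[(1-t)x_k^{-1}\mathfrak{X}_k] \bigr).$$
Here $F[\mathfrak{X}_k - x_k]$ is a polynomial in $x_k$ of degree $\deg F$ (since $x_k$ does not occur in $\mathfrak{X}_k$), while $Exp[(1-t)x_k^{-1}\mathfrak{X}_k]$ is a formal series in $x_k^{-1}$ with coefficients symmetric in $\mathfrak{X}_k$, so their product is a two-sided Laurent series in $x_k$ and the constant term is well-defined. Setting $z = x_k^{-1}$ converts $\mathfrak{c}_{x_k}(x_k^n \cdot -)$ into $\langle z^n \rangle(-)$, so the expression inside $\tau_k$ equals
$$\langle z^n \rangle F[\mathfrak{X}_k - z^{-1}]\, Exp[(1-t)z\mathfrak{X}_k] \;=\; \mathscr{B}_n(F)[\mathfrak{X}_k]$$
by the Jing operator formula.

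Finally, applying $\tau_k$ shifts the alphabet from $\mathfrak{X}_k$ to $\mathfrak{X}_{k-1} = x_k + \mathfrak{X}_k = x_k + x_{k+1} + \ldots$, yielding $\mathscr{B}_n(F)[x_k + x_{k+1} + \ldots]$, which matches the definition of $\partial_-^{(k)}(x_k^n F[\mathfrak{X}_k])$. The argument is essentially formal and the main obstacle is only a bookkeeping one: one must keep careful track of which factor contributes non-negative versus non-positive powers of $x_k$, so that the substitution $z = x_k^{-1}$ and the extraction of the constant term are unambiguous. Once this is set up correctly, the rest is a direct identification with the Jing operator formula.
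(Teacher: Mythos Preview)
Your proof is correct. The paper itself does not give a proof at all; it simply cites \cite{Ion_2022}. Your argument is the natural direct verification: reduce by $\sP_{k-1}^{+}$-linearity to the generators $x_k^n F[\mathfrak{X}_k]$, then make the substitution $z=x_k^{-1}$ to identify the constant-term expression with the Jing operator $\mathscr{B}_n(F)$ evaluated at $\mathfrak{X}_k$, and finally apply the alphabet shift $\tau_k$. This fills in exactly the details the paper omits, and there is nothing to compare since the paper defers entirely to the reference.
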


\begin{proof}
\cite{Ion_2022}.
\end{proof}

As an immediate consequence of this explicit description of the action of the $\partial_{-}^{(k)}$ operator we get the following required lemmas.

\begin{lem}\cite{Ion_2022} \label{lowering is a projection}
The map $\partial_{-}^{(k)}: \sP(k)^{+} \rightarrow \sP(k-1)^{+}$ is a projection onto $\sP(k-1)^{+}$ i.e. for $f \in \sP(k-1)^{+}\subset \sP(k)^{+}$ we have that $\partial_{-}^{(k)}(f) = f$.
\end{lem}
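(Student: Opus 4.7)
The plan is to use the $\sP_{k-1}^{+}$-linearity of $\partial_{-}^{(k)}$ (which is immediate from its defining formula) to reduce the claim to the case of a purely symmetric-function factor. A general element of $\sP(k-1)^{+}$ has the form $g(x_1,\ldots,x_{k-1}) G[\mathfrak{X}_{k-1}]$ with $G \in \Lambda$, and since $g(x_1,\ldots,x_{k-1}) \in \sP_{k-1}^{+}$ passes through $\partial_{-}^{(k)}$ by linearity, it suffices to prove
$$\partial_{-}^{(k)}\bigl(G[\mathfrak{X}_{k-1}]\bigr) = G[\mathfrak{X}_{k-1}]$$
for every $G \in \Lambda$.

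To handle this, I would first rewrite $\mathfrak{X}_{k-1} = x_k + \mathfrak{X}_k$ and expand plethystically as $G[x_k + \mathfrak{X}_k] = \sum_{n} x_k^{n} F_n[\mathfrak{X}_k]$, a finite sum with $F_n \in \Lambda$ determined by the coproduct of $G$. Each summand now has the form required by Lemma \ref{lowering operator alternative description}, and applying that formula termwise and pulling the common operator $\tau_k \mathfrak{c}_{x_k}(\,\cdot\, Exp[-(t-1)x_k^{-1}\mathfrak{X}_k])$ out of the finite sum gives
\begin{align*}
\partial_{-}^{(k)}(G[\mathfrak{X}_{k-1}]) = \tau_k \mathfrak{c}_{x_k}\left(\sum_n x_k^n F_n[\mathfrak{X}_k - x_k]\right) Exp[-(t-1)x_k^{-1}\mathfrak{X}_k].
\end{align*}

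The key observation is that the inner sum is exactly what one obtains from $\sum_n x_k^n F_n[\mathfrak{X}_k] = G[x_k + \mathfrak{X}_k]$ by the plethystic substitution $\mathfrak{X}_k \to \mathfrak{X}_k - x_k$, yielding $G[x_k + \mathfrak{X}_k - x_k] = G[\mathfrak{X}_k]$, an expression independent of $x_k$. Consequently $G[\mathfrak{X}_k]$ factors past $\mathfrak{c}_{x_k}$, and the problem reduces to computing the constant term in $x_k$ of $Exp[-(t-1)x_k^{-1}\mathfrak{X}_k] = \sum_{n \geq 0} x_k^{-n} h_n[(1-t)\mathfrak{X}_k]$, which is simply $h_0[(1-t)\mathfrak{X}_k] = 1$. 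Applying $\tau_k$ to the surviving $G[\mathfrak{X}_k]$ then recovers $G[\mathfrak{X}_{k-1}]$, as required.

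The only real subtlety is justifying the collapse of the inner sum under $\mathfrak{X}_k \to \mathfrak{X}_k - x_k$; this comes down to the fact that plethystic substitution is a ring homomorphism, so the termwise substitution in the finite expansion $G[x_k + \mathfrak{X}_k] = \sum_n x_k^n F_n[\mathfrak{X}_k]$ agrees with substituting directly in $G[x_k + \mathfrak{X}_k]$. Everything else is bookkeeping with Lemma \ref{lowering operator alternative description} and the explicit form of $Exp$.
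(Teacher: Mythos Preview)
Your argument is correct and follows essentially the same route as the paper's own proof: reduce by $\sP_{k-1}^{+}$-linearity to $G[\mathfrak{X}_{k-1}]$, expand via the coproduct as $\sum_n x_k^n F_n[\mathfrak{X}_k]$, apply Lemma~\ref{lowering operator alternative description}, collapse the sum back to $G[\mathfrak{X}_k]$ by the substitution $\mathfrak{X}_k \mapsto \mathfrak{X}_k - x_k$, and then use that the $x_k$-constant term of $Exp[-(t-1)x_k^{-1}\mathfrak{X}_k]$ is $1$. The only cosmetic issue is that in your displayed equation the factor $Exp[-(t-1)x_k^{-1}\mathfrak{X}_k]$ should sit inside the argument of $\mathfrak{c}_{x_k}$, as your surrounding prose makes clear.
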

\begin{proof}
    Fix $F \in \Lambda.$ It suffices to show that $\partial_{-}^{(k)}(F[\mathfrak{X}_{k-1}]) = F[\mathfrak{X}_{k-1}].$ By using the coproduct on $\Lambda$ we can expand $F[\mathfrak{X}_{k-1}] = F[x_k+\mathfrak{X}_{k}]$ in powers of $x_k^{i}$ with some coefficients $F_i \in \Lambda$ as $F[x_k+\mathfrak{X}_{k}] = \sum_{i \geq 0} x_k^{i}F_{i}[\mathfrak{X}_{k}].$ From Lemma \ref{lowering operator alternative description} we have 
    \begin{align*}
        \partial_{-}^{(k)}(F[\mathfrak{X}_{k-1}]) &= \partial_{-}^{(k)}(F[x_{k}+\mathfrak{X}_{k}])\\
        &= \partial_{-}^{(k)}(\sum_{i \geq 0} x_k^{i}F_{i}[\mathfrak{X}_{k}]) \\
        &= \tau_k \mathfrak{c}_{x_k} \left( \sum_{i \geq 0} x_k^{i}F_i[\mathfrak{X}_k -x_k]Exp[-(t-1)x_k^{-1}\mathfrak{X}_{k}] \right) \\
        &= \tau_k \mathfrak{c}_{x_k} F[\mathfrak{X}_k -x_k +x_k]Exp[-(t-1)x_k^{-1}\mathfrak{X}_{k}] \\
        &= \tau_k \mathfrak{c}_{x_k} F[\mathfrak{X}_k] Exp[-(t-1)x_k^{-1}\mathfrak{X}_{k}] \\ 
        &= \tau_k F[\mathfrak{X}_k] \mathfrak{c}_{x_k} Exp[-(t-1)x_k^{-1}\mathfrak{X}_{k}] \\ 
        &= \tau_k F[\mathfrak{X}_k] \\
        &= F[\mathfrak{X}_{k-1}]. \\
    \end{align*}

\end{proof}
\begin{lem}\label{lowering is a partial sym module map}
For all $G \in \Lambda$ and $g(x) \in \sP(k)^{+}$ 
$$ \partial_{-}^{(k)}(G[x_k + x_{k+1}+\dots]g(x)) = G[x_k + x_{k+1}+\dots] \partial_{-}^{(k)}(g(x)).$$
\end{lem}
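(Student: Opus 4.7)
The plan is to reduce to pure tensor elements, apply the explicit formula from Lemma \ref{lowering operator alternative description}, and exploit a plethystic cancellation that isolates $G[\mathfrak{X}_{k-1}]$ as a scalar under $\mathfrak{c}_{x_k}$.

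By the decomposition $\sP(k)^+ = \mathbb{Q}(q,t)[x_1,\ldots,x_k]\otimes \Lambda[\mathfrak{X}_k]$, the space $\sP(k)^+$ is spanned by elements of the form $g(x) = f(x_1,\ldots,x_k) F[\mathfrak{X}_k]$ with $f \in \sP_k^+$ and $F \in \Lambda$. By linearity, it is enough to prove the identity for such $g$. Write $G[\mathfrak{X}_{k-1}] = G[x_k + \mathfrak{X}_k]$ and use the coproduct on $\Lambda$ to expand in powers of $x_k$: there exist $G_i \in \Lambda$ with
\[
G[x_k + \mathfrak{X}_k] \;=\; \sum_{i \geq 0} x_k^{i}\, G_i[\mathfrak{X}_k],
\]
exactly as in the proof of Lemma \ref{lowering is a projection}. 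Substituting $\mathfrak{X}_k \mapsto \mathfrak{X}_k - x_k$ in this identity yields the key plethystic cancellation
\[
\sum_{i \geq 0} x_k^i\, G_i[\mathfrak{X}_k - x_k] \;=\; G[\mathfrak{X}_k],
\]
which is an element of $\Lambda[\mathfrak{X}_k]$ and, in particular, does not involve $x_k$.

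Now apply Lemma \ref{lowering operator alternative description} to $G[\mathfrak{X}_{k-1}]g(x) = \sum_i x_k^i f(x_1,\ldots,x_k)\,(G_i F)[\mathfrak{X}_k]$:
\begin{align*}
\partial_{-}^{(k)}\!\bigl(G[\mathfrak{X}_{k-1}]g(x)\bigr)
&= \tau_k\,\mathfrak{c}_{x_k}\!\left(\sum_{i \geq 0} x_k^i f(x_1,\ldots,x_k)(G_iF)[\mathfrak{X}_k - x_k]\,Exp[-(t-1)x_k^{-1}\mathfrak{X}_k]\right) \\
&= \tau_k\,\mathfrak{c}_{x_k}\!\left(f(x_1,\ldots,x_k)\,G[\mathfrak{X}_k]\,F[\mathfrak{X}_k - x_k]\,Exp[-(t-1)x_k^{-1}\mathfrak{X}_k]\right),
\end{align*}
using the cancellation above. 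Since $G[\mathfrak{X}_k]$ is independent of $x_k$, it commutes with $\mathfrak{c}_{x_k}$, and since $\tau_k G[\mathfrak{X}_k] = G[\mathfrak{X}_{k-1}]$, we can pull it out:
\[
\partial_{-}^{(k)}\!\bigl(G[\mathfrak{X}_{k-1}]g(x)\bigr) \;=\; G[\mathfrak{X}_{k-1}]\cdot \tau_k\,\mathfrak{c}_{x_k}\!\bigl(f(x_1,\ldots,x_k)F[\mathfrak{X}_k - x_k]\,Exp[-(t-1)x_k^{-1}\mathfrak{X}_k]\bigr) \;=\; G[\mathfrak{X}_{k-1}]\,\partial_{-}^{(k)}(g(x)),
\]
again by Lemma \ref{lowering operator alternative description}.

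The only subtle step is recognizing the plethystic cancellation $\sum_i x_k^i G_i[\mathfrak{X}_k - x_k] = G[\mathfrak{X}_k]$; everything else is formal manipulation. Note that this is essentially the same cancellation that drives the proof of Lemma \ref{lowering is a projection}, so one could alternatively phrase the argument as: apply the $G_i$-expansion and invoke Lemma \ref{lowering is a projection} (suitably generalized) factor by factor. Either route gives the claimed $\Lambda[\mathfrak{X}_{k-1}]$-linearity of $\partial_{-}^{(k)}$.
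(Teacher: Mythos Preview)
Your proof is correct and follows essentially the same approach as the paper. The paper compresses your coproduct expansion into a single step by writing $G[\mathfrak{X}_{k-1}-x_k] = G[\mathfrak{X}_k]$ directly inside the formula from Lemma~\ref{lowering operator alternative description}; your explicit expansion $G[x_k+\mathfrak{X}_k]=\sum_i x_k^i G_i[\mathfrak{X}_k]$ is precisely what justifies that step, so the two arguments are the same computation at different levels of detail.
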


\begin{proof}
It suffices to take $g(x) \in \sP(k)^{+}$ to be of the form $g(x) = f(x_1,\dots,x_k)F[\mathfrak{X}_{k}]$ with $f\in \sP_{k}^{+}$ and $F \in \Lambda$.
From Lemma \ref{lowering operator alternative description} we get the following:

\begin{align*}
\partial_{-}^{(k)}(G[x_k + x_{k+1}+\dots]g(x)) &= \partial_{-}^{(k)}(G[\mathfrak{X}_{k-1}]g(x)) \\
&= \tau_k \mathfrak{c}_{x_k} G[\mathfrak{X}_{k-1}-x_k] f(x_1,\dots,x_k)F[\mathfrak{X}_{k} - x_k]Exp[-(t-1)x_k^{-1}\mathfrak{X}_{k}] \\
&= \tau_k \mathfrak{c}_{x_k} G[\mathfrak{X}_{k}] f(x_1,\dots,x_k)F[\mathfrak{X}_{k} - x_k]Exp[-(t-1)x_k^{-1}\mathfrak{X}_{k}] \\
&= \tau_kG[\mathfrak{X}_{k}]\mathfrak{c}_{x_k}f(x_1,\dots,x_k)F[\mathfrak{X}_{k} - x_k]Exp[-(t-1)x_k^{-1}\mathfrak{X}_{k}]\\
&= G[\mathfrak{X}_{k-1}]\tau_k\mathfrak{c}_{x_k}f(x_1,\dots,x_k)F[\mathfrak{X}_{k} - x_k]Exp[-(t-1)x_k^{-1}\mathfrak{X}_{k}]\\
&= G[\mathfrak{X}_{k-1}]\partial_{-}^{(k)}(f(x_1,\dots,x_k)F[\mathfrak{X}_{k}]) \\
&= G[\mathfrak{X}_{k-1}]\partial_{-}^{(k)}(g(x)).\\
\end{align*}
    
\end{proof}

\begin{cor}\label{lowering is a sym module map}
    For $G \in \Lambda$ and $g(x) \in \sP(k)^{+}$
    $$\partial_{-}^{(k)}(G[X]g(x)) = G[X] \partial_{-}^{(k)}(g(x)).$$
\end{cor}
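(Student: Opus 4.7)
The plan is to reduce this corollary to the previous Lemma \ref{lowering is a partial sym module map} by splitting the full alphabet $X = x_1+x_2+\ldots$ into the two subalphabets $x_1+\ldots+x_{k-1}$ and $\mathfrak{X}_{k-1} = x_k + x_{k+1}+\ldots$, and then invoking the coproduct on $\Lambda$ to decompose $G[X]$ along this splitting. The point is that Lemma \ref{lowering is a partial sym module map} already handles the half of the alphabet that $\partial_{-}^{(k)}$ sees (everything from $x_k$ onward), while the $\sP_{k-1}^{+}$-linearity built into Definition \ref{defn7} handles the half it does not see ($x_1,\ldots,x_{k-1}$).

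Concretely, I would first use the coproduct on $\Lambda$ to expand
\[
G[X] \;=\; G\!\left[(x_1+\ldots+x_{k-1}) + \mathfrak{X}_{k-1}\right] \;=\; \sum_{i} G_i'[x_1+\ldots+x_{k-1}] \cdot G_i''[\mathfrak{X}_{k-1}]
\]
as a finite sum with $G_i', G_i'' \in \Lambda$. Each factor $G_i'[x_1+\ldots+x_{k-1}]$ is a symmetric polynomial in $x_1,\ldots,x_{k-1}$, and in particular lies in $\sP_{k-1}^{+}$.

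Next I would apply $\partial_{-}^{(k)}$ term by term to $G[X]\,g(x)$. The $\sP_{k-1}^{+}$-linearity of $\partial_{-}^{(k)}$ coming from Definition \ref{defn7} lets me pull out each $G_i'[x_1+\ldots+x_{k-1}]$, and Lemma \ref{lowering is a partial sym module map} lets me pull out the remaining factor $G_i''[\mathfrak{X}_{k-1}]$. Reassembling via the same coproduct identity turns the resulting sum back into $G[X]\,\partial_{-}^{(k)}(g(x))$.

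I do not expect a serious obstacle in this argument; the only subtlety worth noting is interpretational. The element $G[X]$ appears on the left-hand side as an element of $\sP(k)^{+}$ (expanded via $X = (x_1+\ldots+x_k) + \mathfrak{X}_k$) and on the right-hand side as an element of $\sP(k-1)^{+}$ (expanded via $X = (x_1+\ldots+x_{k-1}) + \mathfrak{X}_{k-1}$). Both realizations are compatible with the single coproduct expansion above, and the manipulations carried out respect the splitting, so there is no ambiguity.
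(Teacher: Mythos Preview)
Your proposal is correct and follows essentially the same approach as the paper's own proof: both expand $G[X]$ along the splitting $X = (x_1+\ldots+x_{k-1}) + \mathfrak{X}_{k-1}$ into a finite sum of products, then pull out the $\sP_{k-1}^{+}$-factors by Definition \ref{defn7} and the $\Lambda[\mathfrak{X}_{k-1}]$-factors by Lemma \ref{lowering is a partial sym module map}. The only cosmetic difference is that you phrase the decomposition via the coproduct on $\Lambda$, whereas the paper simply writes $G[X] = \sum_i f_i(x_1,\ldots,x_{k-1})F_i[x_k+\ldots]$ with $f_i \in \sP_{k-1}$ and $F_i \in \Lambda$.
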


\begin{proof}
    Take $G \in \Lambda$ and $g(x) \in \sP(k)^{+}$. Expand $G[X]$ as a finite sum of terms of the form $f_i(x_1,\dots,x_{k-1})F_i[x_k +\ldots]$, where $f_i \in \sP_{k-1}$ and $F_i \in \Lambda$ so 
    $$G[X] = \sum_{i} f_i(x_1,\dots,x_{k-1})F_i[x_k +\ldots].$$  By Lemma \ref{lowering is a partial sym module map} and the fact that $\partial_{-}^{(k)}$ is a $\sP_{k-1}^{+}$-linear map from Definition \ref{defn7} we now see that 

\begin{align*}
    \partial_{-}^{(k)}(G[X]g(x)) &= 
    \sum_{i} \partial_{-}^{(k)}( f_i(x_1,\dots,x_{k-1})F_i[x_k +\ldots]g(x)) \\
    &= \sum_{i} f_i(x_1,\dots,x_{k-1})F_i[x_k +\ldots]\partial_{-}^{(k)}(g(x)) \\
    &= G[X]\partial_{-}^{(k)}(g(x)).\\
\end{align*}

\end{proof}

We can now construct a full $\y$-weight basis of $\mathscr{P}_{as}^{+}$. We parameterize this basis by pairs $(\mu | \lambda)$ for $\mu$ a reduced composition and $\lambda$ a partition. Combinatorially, this is reasonable because, as already mentioned, the monomial basis for $\sP_{as}^{+}$, $\{x^{\mu}m_{\lambda} \mid \mu \in \Compred, \lambda \in \Par\}$, is indexed by pairs of reduced compositions and partitions.

\begin{defn} \label{defn9}
For $\mu$ a reduced composition and $\lambda$ a partition define the \textbf{\textit{stable-limit non-symmetric Macdonald function}} corresponding to $(\mu|\lambda)$ as
$$\widetilde{E}_{(\mu|\lambda)} :=  \partial_{-}^{(\ell(\mu)+1)}\cdots \partial_{-}^{(\ell(\mu)+\ell(
\lambda))} \widetilde{E}_{\mu*\lambda}. $$For a partition $\lambda$ define
\begin{equation}\label{A-Lambda basis}
\mathcal{A}_{\lambda} := \widetilde{E}_{(\emptyset|\lambda)} \in \Lambda .
\end{equation}

\end{defn}

Later in Theorem \ref{main theorem}, we will show that the collection $\{\widetilde{E}_{(\mu|\lambda)} \mid \mu \in \Compred, \lambda \in \Par\}$ is a $\y$-weight basis for $\sP_{as}^{+}.$

\begin{remark*}
     Note importantly that $\widetilde{E}_{(\mu|\lambda)} \in \mathscr{P}(\ell(\mu))^{+}$ and $\widetilde{E}_{(\mu|\lambda)}$ is homogeneous of degree $|\mu| + |\lambda|$. Further, we have $\widetilde{E}_{(\mu|\emptyset)} = \widetilde{E}_{\mu}  $ and $\widetilde{E}_{(\emptyset |\lambda)} = \mathcal{A}_{\lambda}$. Notice that in Definition \ref{defn9} it makes sense to consider $\widetilde{E}_{(\mu|\lambda)}$ when $\mu$ is not necessarily reduced. However, it is a nontrivial consequence of Theorem \ref{second main theorem} that an analogously defined $\widetilde{E}_{(\mu*0|\lambda)}$ is a nonzero scalar multiple of $\widetilde{E}_{(\mu|\lambda)}.$ Thus there is no need to consider the case of $\mu$ non-reduced when building a basis of $\sP_{as}^{+}.$

     There is another basis of $\sP_{as}^{+}$ given by Ion and Wu in their unpublished work \cite{IW_2022} which is equipped with a natural ordering with respect to which the limit Cherednik operators are triangular. It follows then that after we show in Corollary \ref{stable macdonald are weight vectors} that the $\widetilde{E}_{(\mu|\lambda)}$ are $\y$-weight vectors that each $\widetilde{E}_{(\mu|\lambda)}$ has a triangular expansion in Ion and Wu's basis.
\end{remark*}

\begin{remark*}
    The stable-limit non-symmetric Macdonald functions $\widetilde{E}_{(\mu|\lambda)}$ as defined in this paper are distinct from the stable-limits of non-symmetric Macdonald polynomials occurring in \cite{haglund2007combinatorial}. In their paper Haglund, Haiman, and Loehr investigate stable-limits of the form $(E_{0^m* \mu})_{m \geq 0}$ where $\mu$ is a composition. Their analysis does not require the convergence definition of Ion and Wu as the sequences $(E_{0^m* \mu})_{m \geq 0}$ have stable limits in the traditional sense. Further, the limits of the $(E_{0^m* \mu})_{m \geq 0}$ sequences are symmetric functions whereas, as we will see soon, the $\widetilde{E}_{(\mu|\lambda)}$ are not fully symmetric in general. 
\end{remark*}

The following simple lemma will be used to show that since the $\widetilde{E}_{\mu*\lambda}$ are $\y$-weight vectors the stable-limit non-symmetric Macdonald functions $\widetilde{E}_{(\mu|\lambda)}$ are $\y$-weight vectors as well. We describe their weights in Corollary \ref{stable macdonald are weight vectors}.

\begin{lem} \label{weight vectors and lowering}
Suppose $f \in \mathcal{P}(k)^{+}$ is a $\y$-weight vector with weight $(\alpha_1,\ldots,\alpha_k,0,0,\ldots)$. Then $\partial_{-}^{(k)} f \in \mathcal{P}(k-1)^{+}$ is a $\y$-weight vector with weight $(\alpha_1,\ldots,\alpha_{k-1},0,0,\ldots)$. 
\end{lem}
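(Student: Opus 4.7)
The plan is to split the claim into two pieces: showing that $\partial_{-}^{(k)}f$ has eigenvalues $\alpha_1,\ldots,\alpha_{k-1}$ under $\y_1,\ldots,\y_{k-1}$, and showing that $\y_j$ annihilates $\partial_{-}^{(k)}f$ for all $j\geq k$. The second piece is immediate: by Lemma \ref{lowering is a projection} we have $\partial_{-}^{(k)}f\in\sP(k-1)^{+}$, and the last paragraph following Theorem \ref{deformed Y's converge} tells us that $\y_j$ for $j\geq k$ annihilates $\sP(k-1)^{+}$. So the content of the lemma lies in the first piece.

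For the first piece, I would reduce everything to the single operator identity
\[
\y_i\,\partial_{-}^{(k)} \;=\; \partial_{-}^{(k)}\,\y_i \qquad (1\le i\le k-1)
\]
as maps $\sP(k)^{+}\to\sP(k-1)^{+}$. Granted this, one checks directly
\[
\y_i(\partial_{-}^{(k)} f) \;=\; \partial_{-}^{(k)}(\y_i f) \;=\; \partial_{-}^{(k)}(\alpha_i f) \;=\; \alpha_i\,\partial_{-}^{(k)} f,
\]
which together with the previous paragraph yields the asserted weight.

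The commutation identity itself is exactly the relation $z_i d_{-} = d_{-} z_i$ (for $1\le i\le k-1$ at vertex $k$) recorded in the list of $\mathbb{A}_{t,q}$ relations in the excerpt, transported through the Ion--Wu isomorphism $\sP_{\bullet}\cong V_{\bullet}$. Under that isomorphism $z_i$ corresponds to $\y_i$, and $d_{-}$ corresponds (up to the plethystic change of variables $F[X]\mapsto F[\tfrac{X}{t-1}]$) to $\partial_{-}^{(k)}$; since the change of variables is a $\mathbb{Q}(q,t)$-linear isomorphism on the symmetric-function tensor factor, it preserves the operator identity. Thus the DDP relation transfers verbatim.

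The only real obstacle is making sure one is entitled to cite the DDP relation $z_i d_{-}=d_{-}z_i$ in this exact form; if one wanted to avoid invoking the Carlsson--Mellit/Ion--Wu dictionary, the alternative is a direct computation using the explicit formula of Lemma \ref{lowering operator alternative description} for $\partial_{-}^{(k)}$ together with the limit definition of $\y_i$. That calculation is the technical hurdle in the background, but given the way the paper has set up the standard $\mathbb{A}_{t,q}$ representation and its identification with $\sP_{\bullet}$, the cleanest route is simply to quote the DDP commutation and apply it as above.
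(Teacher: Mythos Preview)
Your proposal is correct and follows essentially the same approach as the paper: both split the claim into the commutation $\y_i\partial_{-}^{(k)}=\partial_{-}^{(k)}\y_i$ for $1\le i\le k-1$ (which the paper cites directly from \cite{Ion_2022}, while you trace it back through the $\mathbb{A}_{t,q}$ relation $z_id_{-}=d_{-}z_i$ and the Ion--Wu isomorphism) and the annihilation of $\sP(k-1)^{+}$ by $\y_j$ for $j\ge k$. The only difference is that you supply more detail on the provenance of the commutation identity, but the structure is the same.
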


\begin{proof}
We know that from \cite{Ion_2022} for $g \in \mathcal{P}(k)^{+}$ and $1\leq i \leq k-1$, $\y_i \partial_{-}^{(k)} g = \partial_{-}^{(k)}\y_i g$ so $\y_i\partial_{-}^{(k)} f = \partial_{-}^{(k)}\y_i f = \alpha_i \partial_{-}^{(k)} f.$
From \cite{Ion_2022} we have that if $i\geq k$ then $\y_i$ annihilates $\mathscr{P}(k-1)$.
Since $\partial_{-}^{(k)}f \in \mathcal{P}(k-1)^{+}$ for all $i \geq k$, $\y_i \partial_{-}^{(k)}f = 0$.

\end{proof}

\begin{example}
Here we give a few basic examples of stable-limit non-symmetric Macdonald functions expanded in the Hall-Littlewood basis $\mathcal{P}_{\lambda}$ and their corresponding weights.
\begin{align*}
    &\bullet  \widetilde{E}_{(\emptyset|2)} &&= \mathcal{P}_{2}[x_1+\ldots] +\frac{q^{-1}}{1-q^{-1}t}\mathcal{P}_{1,1}[x_1+\ldots]; && \text{weight } \widetilde{\alpha}_{(\emptyset|2)} = (0,0,\ldots) \\
    &\bullet \widetilde{E}_{(2|\emptyset)} &&= x_1^2 + \frac{q^{-1}}{1-q^{-1}t}x_1 \mathcal{P}_1[x_2+\ldots]; && \text{weight } \widetilde{\alpha}_{(2|\emptyset)} =(q^2t,0,\ldots) \\
    &\bullet \widetilde{E}_{(1,1,1|\emptyset)} &&= x_1x_2x_3; && \text{weight } \widetilde{\alpha}_{(1,1,1|\emptyset)} =(qt^3,qt^2,qt,0,\ldots) \\
    &\bullet \widetilde{E}_{(1,1|1)} &&= x_1x_2\mathcal{P}_1[x_3+\ldots]; && \text{weight }  \widetilde{\alpha}_{(1,1|1)}= (qt^3,qt^2,0,\ldots) \\
    &\bullet \widetilde{E}_{(1|1,1)} &&= x_1\mathcal{P}_{1,1}[x_2+\cdots]; && \text{weight } \widetilde{\alpha}_{(1|1,1)}= (qt^3,0,\ldots)
\end{align*}
\end{example}

As an immediate result of Lemma \ref{weight vectors and lowering} we have the following:
\begin{cor}\label{stable macdonald are weight vectors}
    For $\mu \in \Compred$ and $\lambda \in \Par$,  $\widetilde{E}_{(\mu|\lambda)} \in \sP_{as}^{+}$ is a $\y$-weight vector with weight $\widetilde{\alpha}_{(\mu|\lambda)}$ given explicitly by

\[ \widetilde{\alpha}_{(\mu|\lambda)}(i) = 
    \begin{cases}
    \widetilde{\alpha}_{\mu*\lambda}(i) = q^{\mu_i}t^{\ell(\mu)+\ell(\lambda)+1-\beta_{\mu*\lambda}(i)} & i\leq \ell(\mu) , \mu_i \neq 0 \\
    0 & \text{otherwise}.
     \end{cases}
\]

\end{cor}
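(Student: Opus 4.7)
The plan is to combine three already-established facts: Theorem \ref{weight vector for all} giving the weight of $\widetilde{E}_{\mu*\lambda}$, Lemma \ref{convergence of eigenvalues} describing that weight explicitly, and Lemma \ref{weight vectors and lowering} saying how the operators $\partial_{-}^{(k)}$ transform weights. Since Definition \ref{defn9} realizes $\widetilde{E}_{(\mu|\lambda)}$ as a composition of these lowering operators applied to $\widetilde{E}_{\mu*\lambda}$, the proof should essentially be a straight induction on $\ell(\lambda)$.

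More concretely, I would start by noting that $\widetilde{E}_{\mu*\lambda} \in \sP(\ell(\mu)+\ell(\lambda))^{+}$, and by Theorem \ref{weight vector for all} it is a $\y$-weight vector with weight $\widetilde{\alpha}_{\mu*\lambda}$. By Lemma \ref{convergence of eigenvalues}, this weight has $\widetilde{\alpha}_{\mu*\lambda}(i) = 0$ for $i > \ell(\mu)+\ell(\lambda)$, so it has the shape required to apply Lemma \ref{weight vectors and lowering} at level $k = \ell(\mu)+\ell(\lambda)$. Doing so produces a $\y$-weight vector in $\sP(\ell(\mu)+\ell(\lambda)-1)^{+}$ whose weight agrees with $\widetilde{\alpha}_{\mu*\lambda}$ in the first $\ell(\mu)+\ell(\lambda)-1$ coordinates and is zero thereafter. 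Iterating this $\ell(\lambda)$ times, with $k$ descending from $\ell(\mu)+\ell(\lambda)$ down to $\ell(\mu)+1$, yields $\widetilde{E}_{(\mu|\lambda)} \in \sP(\ell(\mu))^{+}$ as a $\y$-weight vector whose weight agrees with $\widetilde{\alpha}_{\mu*\lambda}$ in coordinates $1,\ldots,\ell(\mu)$ and vanishes in all higher coordinates.

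To finish, I would read off the explicit formula. For $i \leq \ell(\mu)$, we have $(\mu*\lambda)_i = \mu_i$, so the formula from Lemma \ref{convergence of eigenvalues} gives $\widetilde{\alpha}_{\mu*\lambda}(i) = q^{\mu_i} t^{\ell(\mu)+\ell(\lambda)+1-\beta_{\mu*\lambda}(i)}$ when $\mu_i \neq 0$ and $\widetilde{\alpha}_{\mu*\lambda}(i) = 0$ when $\mu_i = 0$. Combined with the vanishing for $i > \ell(\mu)$ obtained in the iteration above, this matches exactly the piecewise formula in the statement.

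There is really no hard step here — the only thing to be slightly careful about is that each $\partial_{-}^{(k)}$ application requires the input to lie in $\sP(k)^{+}$ and to have weight supported in the first $k$ coordinates, and both conditions propagate through the iteration precisely because Lemma \ref{weight vectors and lowering} produces an output lying in $\sP(k-1)^{+}$ with weight supported in the first $k-1$ coordinates. The only mild subtlety is bookkeeping the case $\mu_i = 0$ for $i \leq \ell(\mu)$ (allowed since $\mu$ reduced only constrains the last part): this case is handled uniformly because $\widetilde{\alpha}_{\mu*\lambda}(i)$ itself already vanishes there by Lemma \ref{convergence of eigenvalues}, so no separate argument is needed.
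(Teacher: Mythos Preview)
Your proposal is correct and follows essentially the same approach as the paper: start from Definition \ref{defn9}, invoke Theorem \ref{weight vector for all} and Lemma \ref{convergence of eigenvalues} to identify the weight of $\widetilde{E}_{\mu*\lambda}$, then apply Lemma \ref{weight vectors and lowering} inductively $\ell(\lambda)$ times to strip off the last coordinates. If anything, you are slightly more explicit than the paper in checking that the hypotheses of Lemma \ref{weight vectors and lowering} are met at each stage and in handling the $\mu_i=0$ case.
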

\begin{proof}
   By Definition \ref{defn9} we have that $$\widetilde{E}_{(\mu|\lambda)} :=  \partial_{-}^{(\ell(\mu)+1)}\cdots \partial_{-}^{(\ell(\mu)+\ell(
\lambda))} \widetilde{E}_{\mu*\lambda}. $$
From Theorem \ref{weight vector for all} we know that $\widetilde{E}_{\mu*\lambda}$ is a $\y$-weight vector with weight $\widetilde{\alpha}_{\mu*\lambda}.$ Recall that from Lemma \ref{convergence of eigenvalues} that $\widetilde{\alpha}_{\mu*\lambda}(i) = q^{(\mu*\lambda)_i}t^{\ell(\mu*\lambda)+1-\beta_{\mu*\lambda}(i)}$ for $i \leq \ell(\mu*\lambda)$ and equals $0$ for $i > \ell(\mu*\lambda).$ Using Lemma \ref{weight vectors and lowering} inductively now shows that $\widetilde{E}_{(\mu|\lambda)}$ is a $\y$-weight vector with weight $\widetilde{\alpha}_{(\mu|\lambda)}$ given by the expression given in the statement of this corollary. 
\end{proof}

By using the HHL-type formula we proved for the functions $\widetilde{E}_{\mu}$ in Corollary \ref{convergence of macdonald}, we readily find a similar formula for the full set of stable-limit non-symmetric Macdonald functions.

\begin{cor}\label{formula for stable macdonald}
    For a reduced composition $\mu$ and partition $\lambda$ we have that
    
    \begin{multline*}
        \widetilde{E}_{(\mu|\lambda)} = \sum_{\substack{\nu ~ \text{partition}  \\ |\nu| \leq |\mu|+|\lambda|}} \sum_{\substack{\sigma:\mu*\lambda * 0^{\ell(\nu)} \rightarrow [\ell(\mu)+\ell(\lambda)+\ell(\nu)]\\ \text{non-attacking} \\ \forall i = 1,...,\ell(\nu) \\ \nu_i = |\sigma^{-1}(\ell(\mu)+\ell(\lambda)+i)|}}\widetilde{\Gamma}(\widehat{\sigma}) x_1 ^{|\sigma^{-1}(1)|}\cdots x_{\ell(\mu)} ^{|\sigma^{-1}(\ell(\mu))|} ~\times \\
        \mathscr{B}_{|\sigma^{-1}(\ell(\mu)+1)|}\cdots \mathscr{B}_{|\sigma^{-1}(\ell(\mu)+\ell(\lambda))|}( m_{\nu})[\mathfrak{X}_{\ell(\mu)+ \ell(\lambda)}]
    \end{multline*} 

where 

$$\widetilde{\Gamma}(\widehat{\sigma}) := q^{-\maj(\widehat{\sigma})}t^{\coinv(\widehat{\sigma})} \prod_{\substack{ u \in dg'(\mu*\lambda * 0^{\ell(\nu)}) \\ \widehat{\sigma}(u) \neq \widehat{\sigma}(d(u)) \\ u ~ \text{not in row } 1 }} \left( \frac{1-t}{1-q^{-(\mathleg(u)+1)}t^{(a(u)+1)}} \right) \prod_{\substack{ u \in dg'(\mu * \lambda * 0^{\ell(\nu)}) \\ \widehat{\sigma}(u) \neq \widehat{\sigma}(d(u)) \\ u ~ \text{in row } 1 }} \left( 1-t \right) .$$

\end{cor}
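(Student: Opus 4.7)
The plan is to start from the explicit monomial-times-tail expansion of $\widetilde{E}_{\mu*\lambda}$ provided by Corollary \ref{convergence of macdonald} (applied to the composition $\mu*\lambda$ with $n = \ell(\mu) + \ell(\lambda)$) and then apply the lowering operators from Definition \ref{defn9} one at a time, reading off the effect of each using Definition \ref{defn7}. Writing $N := \ell(\mu) + \ell(\lambda)$ and $a_i := |\sigma^{-1}(i)|$ for $1 \leq i \leq N$, Corollary \ref{convergence of macdonald} gives
\[
\widetilde{E}_{\mu*\lambda} \;=\; \sum_{\nu,\,\sigma}\, \widetilde{\Gamma}(\widehat{\sigma})\, x_1^{a_1}\cdots x_N^{a_N}\, m_{\nu}[\mathfrak{X}_N],
\]
summed over partitions $\nu$ with $|\nu|\le |\mu|+|\lambda|$ and non-attacking fillings $\sigma : \mu*\lambda*0^{\ell(\nu)} \to [N+\ell(\nu)]$ satisfying $\nu_i = |\sigma^{-1}(N+i)|$.

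Next, I will apply the outermost lowering operator $\partial_{-}^{(N)}$ to this sum. Each summand has the form $\bigl(x_1^{a_1}\cdots x_{N-1}^{a_{N-1}}\bigr) \cdot x_N^{a_N}\, m_{\nu}[\mathfrak{X}_N]$, so by the $\mathscr{P}_{N-1}^{+}$-linearity of $\partial_{-}^{(N)}$ the first factor passes through, and the defining formula $\partial_{-}^{(N)}(x_N^{a_N} m_{\nu}[\mathfrak{X}_N]) = \mathscr{B}_{a_N}(m_{\nu})[\mathfrak{X}_{N-1}]$ produces
\[
\partial_{-}^{(N)}\widetilde{E}_{\mu*\lambda} \;=\; \sum_{\nu,\,\sigma}\, \widetilde{\Gamma}(\widehat{\sigma})\, x_1^{a_1}\cdots x_{N-1}^{a_{N-1}}\, \mathscr{B}_{a_N}(m_\nu)[\mathfrak{X}_{N-1}].
\]
The key observation is that because $\mathscr{B}_{a_N}(m_\nu) \in \Lambda$, every summand is again of the exact shape required to apply $\partial_{-}^{(N-1)}$: the factor $x_{N-1}^{a_{N-1}}\,\mathscr{B}_{a_N}(m_\nu)[\mathfrak{X}_{N-1}]$ is of the form $x_{k}^{n} F[\mathfrak{X}_{k}]$ with $k = N-1$ and $F = \mathscr{B}_{a_N}(m_\nu)$. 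Thus the procedure can be iterated.

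The plan then is a straightforward induction: apply $\partial_{-}^{(k)}$ for $k = N, N-1, \ldots, \ell(\mu)+1$ in turn. At step $k$, the $\mathscr{P}_{k-1}^{+}$-linearity lets the monomial $x_1^{a_1}\cdots x_{k-1}^{a_{k-1}}$ commute past the operator, while the remaining factor has the form $x_k^{a_k} G_k[\mathfrak{X}_k]$ with $G_k = \mathscr{B}_{a_{k+1}}\cdots \mathscr{B}_{a_N}(m_\nu) \in \Lambda$, so Definition \ref{defn7} replaces it with $\mathscr{B}_{a_k}(G_k)[\mathfrak{X}_{k-1}]$. After $\ell(\lambda)$ steps one arrives precisely at the claimed formula, with the composed string of Jing operators $\mathscr{B}_{a_{\ell(\mu)+1}}\cdots \mathscr{B}_{a_N}$ acting on $m_\nu$.

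There is no real obstacle here; the only thing to check is the inductive shape-preservation noted above, which is immediate from the fact that $\mathscr{B}_{n}$ produces an element of $\Lambda$. One small bookkeeping point worth flagging is the final alphabet: after all $\ell(\lambda)$ applications of $\partial_{-}^{(k)}$, the tail alphabet becomes $\mathfrak{X}_{\ell(\mu)}$ (consistent with $\widetilde{E}_{(\mu|\lambda)} \in \sP(\ell(\mu))^{+}$), which one should verify matches the indexing used in the statement.
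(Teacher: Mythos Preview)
Your proposal is correct and is exactly the argument the paper has in mind: the paper gives no explicit proof for this corollary beyond the remark that it follows ``readily'' from the HHL-type expansion of $\widetilde{E}_{\mu*\lambda}$ in Corollary~\ref{convergence of macdonald} together with Definition~\ref{defn9}, and your term-by-term application of the $\partial_{-}^{(k)}$ via Definition~\ref{defn7} is precisely that computation. Your final bookkeeping remark is also well taken: the iterated lowering produces the tail alphabet $\mathfrak{X}_{\ell(\mu)}$ (as it must, since $\widetilde{E}_{(\mu|\lambda)}\in\sP(\ell(\mu))^{+}$), so the subscript $\ell(\mu)+\ell(\lambda)$ appearing in the displayed statement is a typo for $\ell(\mu)$.
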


Unfortunately, this formula is not nearly as elegant or useful as the HHL formula (\ref{HHL}). The main obstruction comes from not having a full understanding of the action of the Jing operators $\mathscr{B}_a$ on the monomial symmetric functions. If one were to find an explicit expansion of elements like $\mathscr{B}_{a_1}\cdots \mathscr{B}_{a_r}(m_{\lambda})$ into another suitable basis of $\Lambda$ (possibly the $\mathcal{P}_{\nu}$ basis) one would be able to give a much more elegant description of these functions. Likely there is a nice way to do this that has eluded this author.

\subsection{$\mathcal{A}_{\lambda}$ Basis for $\Lambda$ and Symmetrization via the Trivial Hecke Idempotent}\label{symmetrization subsection}

Lemma \ref{lowering is a projection} shows that the following operator is well defined on $\sP_{as}^{+}$ i.e. independent of $k$.
\begin{defn} \label{defn8}

For $f\in \mathscr{P}(k)^{+} \subset \mathscr{P}_{as}^{+}$  define 
\begin{equation}\label{stable limit symmetrization}
    \widetilde{\sigma}(f) := \partial_{-}^{(1)}\cdots \partial_{-}^{(k)} f .
\end{equation} Then $\widetilde{\sigma}$ defines an operator $\mathscr{P}_{as}^{+} \rightarrow \Lambda$ which we call the \textbf{\textit{stable-limit symmetrization operator}}. 
\end{defn}

\begin{remark*}
    Note that $\widetilde{\sigma}( \widetilde{E}_{ \lambda}) = \mathcal{A}_{\lambda}$ and $\widetilde{\sigma}(\widetilde{E}_{(\mu|\lambda)}) = \widetilde{\sigma}(\widetilde{E}_{\mu*\lambda}).$ 
\end{remark*}

\begin{defn}\label{idempotents}
For all $0 \leq k < n$ define the operator $\epsilon^{(n)}_k: \sP_n^{+} \rightarrow \sP_n^{+}$ as 
\begin{equation}\label{idempotent eq version 1}
\epsilon^{(n)}_k(f) : = \frac{1}{[n-k]_{t}!} \sum_{\sigma \in \mathfrak{S}_{(1^k,n-k)}} t^{{n-k \choose 2} - \ell(\sigma)} T_{\sigma}(f).
\end{equation}
Here $\mathfrak{S}_{(1^k,n-k)}$ is the Young subgroup of $\mathfrak{S}_{n}$ corresponding to the composition $(1^k,n-k)$, $T_{\sigma} = T_{s_{i_1}}\cdots T_{s_{i_r}}$ whenever $\sigma = s_{i_1}\cdots s_{i_r}$ is a reduced word representing $\sigma$, and $[m]_{t}! : = \prod_{i=1}^{m}(\frac{1-t^i}{1-t})$ is the $t$-factorial. We will simply write $\epsilon^{(n)}$ for $\epsilon^{(n)}_0$.

For $n \geq 1$ define the rational function 

\begin{equation}\label{HL kernel}
    \Omega_n(x) = \Omega_n(x_1,\dots,x_n;t): = \prod_{1\leq i<j\leq n}(\frac{x_i - tx_j}{x_i - x_j}).
\end{equation}

\end{defn}

We will need the following technical result relating the action of $\epsilon^{(n)}$ on polynomials to a Weyl character type sum involving $\Omega_n$.

\begin{prop}\label{idempotent from HL kernel}
    For $f(x) \in \sP_n^{+}$ 
    \begin{equation}
        \epsilon^{(n)}(f(x)) = \frac{1}{[n]_{t}!} \sum_{\sigma \in \mathfrak{S}_n} \sigma( f(x)\Omega_n(x) ).
    \end{equation}
\end{prop}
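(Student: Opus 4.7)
The plan is to prove this by induction on $n$ after first recording two structural features. Both sides define $(\sP_n^+)^{\mathfrak{S}_n}$-linear operators with image in $(\sP_n^+)^{\mathfrak{S}_n}$: on the left, the Hecke quadratic relation $(T_i-1)(T_i+t)=0$ together with the braid relations yields $T_i\epsilon^{(n)}=\epsilon^{(n)}$ for every $i$, so the image lies in the common $T_i$-fixed subspace $(\sP_n^+)^{\mathfrak{S}_n}$; the right-hand side is manifestly $\mathfrak{S}_n$-invariant as a full orbit sum. Symmetric-polynomial linearity follows from Lemma \ref{sym x and y commute with T's} on the left and from $\sigma(G f)=G\sigma(f)$ for $G$ symmetric on the right.

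For the inductive step, I would use the coset decomposition $\mathfrak{S}_n=\bigsqcup_{k=1}^n v_k\mathfrak{S}_{n-1}$, where $v_k=s_k s_{k+1}\cdots s_{n-1}$ is the minimal-length representative sending $n$ to $k$ (so $\ell(v_k)=n-k$ and $T_{v_k}=T_k T_{k+1}\cdots T_{n-1}$). Writing $\sigma = v_k\tau$ with $\tau\in\mathfrak{S}_{n-1}$ gives $\ell(\sigma)=(n-k)+\ell(\tau)$ and $T_\sigma = T_{v_k}T_\tau$, so
\begin{equation*}
\sum_{\sigma\in\mathfrak{S}_n} t^{\binom{n}{2}-\ell(\sigma)}T_\sigma \;=\; \Bigl(\sum_{k=1}^n t^{k-1}T_{v_k}\Bigr)\sum_{\tau\in\mathfrak{S}_{n-1}} t^{\binom{n-1}{2}-\ell(\tau)}T_\tau.
\end{equation*}
Meanwhile, factoring $\Omega_n=\Omega_{n-1}\cdot R_n$ with $R_n = \prod_{i<n}\frac{x_i-tx_n}{x_i-x_n}$ and using that $R_n$ is fixed by every $\tau\in\mathfrak{S}_{n-1}$,
\begin{equation*}
\sum_{\sigma\in\mathfrak{S}_n}\sigma(f\,\Omega_n) \;=\; \sum_{k=1}^n v_k\Bigl(R_n\sum_{\tau\in\mathfrak{S}_{n-1}}\tau(f\,\Omega_{n-1})\Bigr).
\end{equation*}
By the induction hypothesis (and the definition of $\epsilon^{(n-1)}$ for the left-hand inner sum), both inner $\mathfrak{S}_{n-1}$-sums equal the common quantity $g := [n-1]_t!\,\epsilon^{(n-1)}(f)$, which is symmetric in $x_1,\ldots,x_{n-1}$.

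The proposition therefore collapses to the following key identity: for every $g\in\sP_n^+$ symmetric in $x_1,\ldots,x_{n-1}$,
\begin{equation*}
\sum_{k=1}^n t^{k-1}\,T_{v_k}(g) \;=\; \sum_{k=1}^n \Phi_k\,v_k(g), \qquad \Phi_k := v_k(R_n) = \prod_{j\neq k}\frac{x_j-tx_k}{x_j-x_k}.
\end{equation*}
Both sides are genuine polynomials (the left because each $T_i$ preserves polynomials; the right because the simple poles of $\Phi_k$ and $\Phi_n$ at $x_n=x_k$ cancel pairwise, thanks to $v_k(g)|_{x_n=x_k}=g|_{x_n=x_k}$) and both are $\mathfrak{S}_n$-symmetric. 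The $k=n$ terms match trivially ($T_{v_n}=1$ and the leading behavior of $\Phi_n$ in $x_n$ contributes $t^{n-1}$). To finish, I would use the explicit expansion $T_i = \frac{(1-t)x_i}{x_i-x_{i+1}} + \frac{tx_i-x_{i+1}}{x_i-x_{i+1}}s_i$ together with the fact that $T_i(g)=g$ for $i<n-1$, and carry out a nested induction on $n$ to expand $T_k T_{k+1}\cdots T_{n-1}(g)$ as a combination of $v_j(g)$'s whose coefficients telescope to $t^{1-k}\Phi_j$.

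The main obstacle is precisely this last combinatorial matching. Although the high-level reduction is clean, the explicit evaluation of the iterated Demazure--Lusztig action and the comparison of its expansion in $\{v_j(g)\}_{j=1}^n$ against the $\Phi_j$ is intricate. If the direct inductive route is resistant, a fallback is to verify the identity on an explicit $\mathbb{Q}(q,t)$-basis of $(\sP_n^+)^{\mathfrak{S}_{n-1}}$ such as $\{x_n^a\,e_\mu(x_1,\ldots,x_{n-1})\}$, reducing the problem to a family of univariate identities in $x_n$ dispatchable via standard symmetric-function manipulations and Lagrange interpolation.
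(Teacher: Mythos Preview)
The paper does not give a direct argument: its entire proof is a citation to Remark~4.17 of \cite{ram2006alcove}, with a remark that after translating the Hecke-algebra quadratic relation the formula matches. Your proposal is therefore a genuinely different, self-contained route, and the high-level reduction you carry out is correct. The coset factorization $\mathfrak{S}_n=\bigsqcup_k v_k\mathfrak{S}_{n-1}$ with $\ell(v_k\tau)=(n-k)+\ell(\tau)$ and $T_{v_k\tau}=T_{v_k}T_\tau$ gives exactly the factorization of the Hecke sum you wrote; the parallel factorization $\Omega_n=\Omega_{n-1}R_n$ with $R_n$ fixed by $\mathfrak{S}_{n-1}$ is also correct; and by induction both inner sums are $g=[n-1]_t!\,\epsilon^{(n-1)}(f)$, symmetric in $x_1,\ldots,x_{n-1}$. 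So the proposition does reduce to the identity you isolate.

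The gap is that you stop there. You correctly identify the residual identity
\[
\sum_{k=1}^n t^{k-1}\,T_kT_{k+1}\cdots T_{n-1}(g)\;=\;\sum_{k=1}^n\Bigl(\prod_{j\ne k}\tfrac{x_j-tx_k}{x_j-x_k}\Bigr)v_k(g),\qquad g\in(\sP_n^+)^{\mathfrak{S}_{n-1}},
\]
as the crux, but you neither carry out the nested induction nor the basis verification you propose; you explicitly call this an ``obstacle.'' Since this identity is the entire remaining content once the outer induction is set up, the proposal as written is incomplete. The identity \emph{is} provable along your lines: using the two-term expansion $T_i=\frac{(1-t)x_i}{x_i-x_{i+1}}+\frac{tx_i-x_{i+1}}{x_i-x_{i+1}}s_i$, one shows by induction on the length of the string that $T_{n-m}\cdots T_{n-1}(g)$ expands as $\sum_{j=n-m}^n\bigl(\prod_{i\in\{n-m,\ldots,n\}\setminus\{j\}}\frac{x_i-tx_j}{x_i-x_j}\bigr)\,c_j(g)$ for the appropriate cyclic shifts $c_j$, with a partial-fraction recombination at each step; summing against $t^{k-1}$ then telescopes to the right-hand side. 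Alternatively, both sides are linear over symmetric polynomials and land in $(\sP_n^+)^{\mathfrak{S}_n}$, so it suffices to check $g=x_n^a$, which is a direct (if unilluminating) computation. Either way, some actual work remains before your argument is a proof; the paper sidesteps all of this by appealing to the literature.
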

\begin{proof}
   See Remark 4.17 in \cite{ram2006alcove}. After translating the finite Hecke algebra quadratic relations in \cite{ram2006alcove} to match those occurring in this paper the formula matches.
\end{proof}

From the formula above in Proposition \ref{idempotent from HL kernel} we can show that the sequence of trivial idempotents $(\epsilon^{(n)})_{n\geq 1}$ converges in the sense of \cite{Ion_2022}. 

\begin{prop}\label{idempotents converge}
    The sequence of operators $(\epsilon^{(n)})_{n\geq 1}$ converges to an idempotent operator $\epsilon: \sP_{as}^{+} \rightarrow \Lambda$ such that for all $i \geq 1$, $\epsilon T_i = \epsilon$.
\end{prop}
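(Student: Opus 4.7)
My plan is to prove convergence of $(\epsilon^{(n)})_{n\ge 1}$ on a basis of $\sP_{as}^{+}$ and then obtain the two algebraic identities by passing finite-dimensional relations through the limit. Take the $\mathbb{Q}(q,t)$-basis $\{x^\mu\,\mathcal{P}_\lambda[\mathfrak{X}_{\ell(\mu)}]: \mu \in \Compred,\,\lambda \in \Par\}$ of $\sP_{as}^{+}$, using that the Hall-Littlewoods form a basis of $\Lambda$. By $\mathbb{Q}(q,t)$-linearity, convergence of the operator sequence reduces to showing, for each such basis element $f = x^\mu\,\mathcal{P}_\lambda[\mathfrak{X}_k]$ with $k:=\ell(\mu)$, that $(\epsilon^{(n)}(\pi_n(f)))_{n\ge 1}$ converges in the sense of Definition \ref{defn6} to an element of $\Lambda$.

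\textbf{Key rewriting and limit.} For $n \ge k$, apply Proposition \ref{idempotent from HL kernel} together with the factorization
\begin{equation*}
\Omega_n(x) = \Omega_k(x_1,\dots,x_k)\,\Omega_{n-k}(x_{k+1},\dots,x_n)\!\prod_{\substack{i\le k\\ j>k}}\!\frac{x_i-tx_j}{x_i-x_j}
\end{equation*}
and the parabolic coset decomposition $\mathfrak{S}_n = \bigsqcup_\tau \tau\cdot(\mathfrak{S}_k\times\mathfrak{S}_{n-k})$. Since the cross-term product is invariant under $\mathfrak{S}_k\times\mathfrak{S}_{n-k}$ and $\mathcal{P}_\lambda(x_{k+1},\dots,x_n;t)$ is $\mathfrak{S}_{n-k}$-invariant, the inner sum over $\mathfrak{S}_{n-k}$ collapses via the classical identity $\sum_{\beta\in\mathfrak{S}_m}\beta(\Omega_m)=[m]_t!$, and the inner sum over $\mathfrak{S}_k$ produces the $n$-independent symmetric polynomial $G_\mu(y):=\sum_{\alpha\in\mathfrak{S}_k}\alpha(y^\mu\,\Omega_k(y))$. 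The outer sum over coset representatives becomes a sum over $k$-subsets, giving
\begin{equation*}
\epsilon^{(n)}(\pi_n(f)) = \frac{[n-k]_t!}{[n]_t!}\sum_{\substack{S\subseteq[n]\\|S|=k}} G_\mu(x_S)\,\mathcal{P}_\lambda(x_{[n]\setminus S};t)\!\prod_{\substack{i\in S\\ j\notin S}}\!\frac{x_i-tx_j}{x_i-x_j},
\end{equation*}
which is a symmetric polynomial in $x_1,\dots,x_n$ of total degree $|\mu|+|\lambda|$. The prefactor converges $t$-adically to $(1-t)^k$ since $[n-k]_t!/[n]_t! = \prod_{j=0}^{k-1}(1-t)/(1-t^{n-j})$, and each Hall-Littlewood $\mathcal{P}_\nu(x_1,\dots,x_n;t)$ has the stable limit $\mathcal{P}_\nu[X;t]\in\Lambda$ under the inverse system $(\pi_m)$. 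Expanding the degree-bounded symmetric polynomial $\epsilon^{(n)}(\pi_n(f))$ in the finite basis $\{\mathcal{P}_\nu(x_1,\dots,x_n;t):|\nu|\le|\mu|+|\lambda|\}$ reduces the problem to verifying $t$-adic convergence of each $\mathbb{Q}(q,t)$-valued coefficient; this is achieved by organizing the cross-term products via the plethystic identity $\prod_{j\notin S}\frac{y-tx_j}{y-x_j} = Exp[(1-t)y^{-1}\!\!\sum_{j\notin S}x_j]$, which packages the $n$-dependence into a bounded number of convergent sequences matching the format of Definition \ref{defn6}.

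\textbf{Properties and main obstacle.} Having thereby defined $\epsilon := \lim_n \epsilon^{(n)}\colon \sP_{as}^{+} \to \Lambda$, idempotence reduces to $\epsilon|_\Lambda = \mathrm{id}$: if $g\in\Lambda$, then $\pi_n(g)$ is $\mathfrak{S}_n$-symmetric, and the identity $\sum_\sigma t^{\binom{n}{2}-\ell(\sigma)}=[n]_t!$ gives $\epsilon^{(n)}(\pi_n(g)) = \pi_n(g)$; taking limits yields $\epsilon(g)=g$. The intertwining $\epsilon T_i = \epsilon$ is inherited from $\epsilon^{(n)}T_i^{(n)} = \epsilon^{(n)}$ (valid for $n>i$) via the continuity Proposition 6.21 of \cite{Ion_2022} applied to the convergent sequences $(\epsilon^{(n)})$ and $(T_i^{(n)})$. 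The main obstacle is the final step of the second paragraph: the naive $\binom{n}{k}$ growth of the subset sum must be tamed using the $\mathfrak{S}_n$-symmetry and the plethystic structure above, so that the coefficient of each $\mathcal{P}_\nu$ reduces to a fixed-length collection of $t$-adically convergent scalar sequences. Once this combinatorial collapse is achieved, everything else is a routine application of Proposition 6.21.
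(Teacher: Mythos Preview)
Your handling of idempotence and of $\epsilon T_i = \epsilon$ is correct, and the subset-sum formula you derive for $\epsilon^{(n)}(\pi_n(f))$ via the factorization of $\Omega_n$ and the parabolic coset decomposition is valid. The genuine gap is precisely where you flag it: you assert that the plethystic identity for the cross factor ``packages the $n$-dependence into a bounded number of convergent sequences,'' but you do not carry this out. Rewriting $\sum_{j\notin S}x_j = p_1[x_{[n]}]-\sum_{i\in S}x_i$ does localize the $n$-dependence of each summand to the power sums of $x_{[n]}$, but it does not collapse the sum over $\binom{n}{k}$ subsets to one of fixed length, nor does it make transparent that the resulting coefficients in a Hall--Littlewood (or monomial) expansion converge $t$-adically to elements of $\mathbb{Q}(q,t)$, as Definition~\ref{defn6} requires. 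Moreover, the individual summands carry poles along $x_i=x_j$ for $i\in S$, $j\notin S$, so the plethystic expansion is a formal series in negative powers of the $x_i$ with $i\in S$; extracting polynomial coefficients and controlling their $n$-dependence is a nontrivial step you have not supplied.

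The paper avoids this obstacle by a much shorter reduction. Rather than attacking $x^\mu\mathcal{P}_\lambda[\mathfrak{X}_k]$ directly, it uses two structural facts about $\epsilon^{(n)}$: it commutes with multiplication by any $F[x_1+\cdots+x_n]$ with $F\in\Lambda$, and it satisfies $\epsilon^{(n)}T_i=\epsilon^{(n)}$ for $1\le i\le n-1$. The second fact reduces convergence on $x^\mu$ for arbitrary compositions $\mu$ to the partition case $x^\lambda$; for that case the paper simply invokes the classical identity (Macdonald, Ch.~III, via Proposition~\ref{idempotent from HL kernel}) that
\[
\epsilon^{(n)}(x^\lambda)=\frac{[n-k]_t!}{[n]_t!}\,v_\lambda(t)\,P_\lambda[x_1+\cdots+x_n;t],
\]
from which convergence to $(1-t)^{\ell(\lambda)}v_\lambda(t)P_\lambda[X;t]=Q_\lambda[X;t]\in\Lambda$ is immediate. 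The first fact then handles the symmetric-function factor. This buys a complete two-line convergence argument and, as a bonus, the explicit evaluation $\epsilon(x^\lambda)=Q_\lambda[X;t]$ needed later in Proposition~\ref{idempotent = HL}. Your route, even if the combinatorial collapse can be made rigorous, does not yield this identification directly. I would replace your second paragraph with the reduction via $T_i$-invariance and $\Lambda$-linearity and cite the classical formula.
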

\begin{proof}
    From \cite{Macdonald} in Chapter 3 and Proposition \ref{idempotent from HL kernel} we see that for all partitions $\lambda$ with $\ell(\lambda) = k$ and $n \geq k$ that 
    \begin{equation}
        \epsilon^{(n)}(x^{\lambda}) = \frac{[n-k]_{t}!}{[n]_{t}!} v_{\lambda}(t) P_{\lambda}[x_1+\ldots + x_n;t]
    \end{equation}
where $P_{\lambda}[X;t]$ is the Hall-Littlewood symmetric function defined by Macdonald (not to be confused with $\mathcal{P}_{\lambda}[X]$ seen previously in this paper) and 
$v_{\lambda}(t) := \prod_{i \geq 1}([m_i(\lambda)]_{t}!)$ where $m_i(\lambda)$ is the number of $i$
's in $\lambda = 1^{m_1(\lambda)}2^{m_2(\lambda)}\cdots$. Now we note that with respect to the $t$-adic topology,
$$\lim_{n \rightarrow \infty} \frac{[n-k]_{t}!}{[n]_{t}!} = (1-t)^k$$ so that 
$$ \lim_{n} \epsilon^{(n)}(x^{\lambda}) = v_{\lambda}(t)(1-t)^{\ell(\lambda)}P_{\lambda}[X;t] $$
and hence $(\epsilon^{(n)}(x^{\lambda}))_{n \geq 1}$ converges.
Note that following Macdonald's definitions, 
$$v_{\lambda}(t)(1-t)^{\ell(\lambda)}P_{\lambda}[X;t] = Q_{\lambda}[X;t].$$
Since $\epsilon^{(n)}T_i = \epsilon^{(n)}$ for $1\leq i \leq n-1$ it follows that for all compositions $\mu$, the sequence $(\epsilon^{(n)}(x^{\mu}))_{n \geq 1}$ is convergent. Clearly from definition we have that for all symmetric functions $F \in \Lambda$ and $f(x) \in \sP_n^{+}$
$$\epsilon^{(n)}(F[x_1+ \ldots + x_n]f(x)) = F[x_1+ \ldots + x_n] \epsilon^{(n)}(f(x)).$$ It follows now from a straightforward convergence argument using Remark \ref{useful defn of convergence} that for all $g \in \sP_{as}^{+}$ the sequence $(\epsilon^{(n)}(\pi_n(g)))_{n\geq 1}$ converges. The resulting operator $\epsilon:= \lim_{n} \epsilon^{(n)}\circ \pi_n$ is evidently idempotent as its codomain is $\Lambda$ and certainly $\epsilon$ acts as the identity on symmetric functions. Further, for all $i \in \mathbb{N}$ we have  
$$ \epsilon T_i = \lim_{n} \epsilon^{(n)}\circ \pi_n T_i $$
and since $\pi_n$ commutes with $T_i$ for $n > i + 1$ we see that 
$$\lim_{n} \epsilon^{(n)}\circ \pi_n T_i = \lim_{n} \epsilon^{(n)} T_i \circ \pi_n = \lim_{n} \epsilon^{(n)} \circ \pi_n  = \epsilon.$$
\end{proof}

\begin{cor}\label{partial idempotents converge}
    For all $k \geq 0$ the sequence $(\epsilon^{(n)}_k)_{n>k}$ converges to an idempotent operator $\epsilon_k: \sP_{as}^{+} \rightarrow \sP(k)^{+}$ such that for all $ i \geq k+1$, $\epsilon_k T_i = \epsilon_k$.
\end{cor}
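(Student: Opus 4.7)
The plan is to mirror the proof of Proposition \ref{idempotents converge} almost verbatim, reducing to that result via a variable shift. The key structural observation is that $\epsilon^{(n)}_k$ is a $\mathbb{Q}(q,t)$-linear combination of operators $T_\sigma$ with $\sigma \in \mathfrak{S}_{(1^k,n-k)}$, hence involves only the generators $T_{k+1}, \ldots, T_{n-1}$. Since these commute with multiplication by $x_1,\ldots,x_k$, the operator $\epsilon^{(n)}_k$ is $\sP_k^+$-linear. Let $\tau_k$ denote the shift sending $x_i \mapsto x_{i+k}$; under $\tau_k$, the restriction of $\epsilon^{(n)}_k$ to polynomials in $x_{k+1},\ldots,x_n$ is precisely the conjugate by $\tau_k$ of the full trivial idempotent $\epsilon^{(n-k)}$ acting on polynomials in $x_1,\ldots,x_{n-k}$.

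First I would express an arbitrary $g \in \sP(m)^+$ with $m \geq k$ as a finite sum $\sum_i f_i(x_1,\ldots,x_k) h_i$, where each $h_i$ lies in the subring generated by $x_{k+1}, x_{k+2}, \ldots$ that is isomorphic to $\sP_{as}^+$ via $\tau_k^{-1}$. Using $\sP_k^+$-linearity, it suffices to study the sequence $(\epsilon^{(n)}_k(h_i))_{n > k}$. Next I would apply the conjugation relation above to rewrite $\epsilon^{(n)}_k(h_i) = \tau_k(\epsilon^{(n-k)}(\tau_k^{-1}(h_i)))$, at which point Proposition \ref{idempotents converge} yields convergence of $\epsilon^{(n-k)}(\tau_k^{-1}(h_i))$ to $\epsilon(\tau_k^{-1}(h_i)) \in \Lambda$. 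Shifting back, the limit $\tau_k(\epsilon(\tau_k^{-1}(h_i)))$ is symmetric in $x_{k+1}, x_{k+2}, \ldots$ and so lies in $\sP(k)^+$. Combining this with the scalar factors $f_i$, the convergence in the sense of Definition \ref{defn5} is inherited termwise from Proposition \ref{idempotents converge}, yielding a well-defined limit operator $\epsilon_k: \sP_{as}^+ \to \sP(k)^+$.

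Finally, idempotency and the Hecke-intertwining property transfer from finite level to the limit. At finite level, $\epsilon^{(n)}_k$ restricted to $\sP(k)^+ \cap \sP_n^+$ (where the relevant elements are already symmetric in $x_{k+1},\ldots,x_n$) acts as the identity, so $\epsilon_k \epsilon_k = \epsilon_k$ by continuity. For the relation $\epsilon_k T_i = \epsilon_k$ with $i \geq k+1$, at level $n > i+1$ we have $\epsilon^{(n)}_k T_i = \epsilon^{(n)}_k$ directly from the Hecke algebra absorption property, and $\pi_n$ commutes with $T_i$, so the identical limit argument used at the end of the proof of Proposition \ref{idempotents converge} applies: $\epsilon_k T_i = \lim_n \epsilon^{(n)}_k \pi_n T_i = \lim_n \epsilon^{(n)}_k T_i \pi_n = \lim_n \epsilon^{(n)}_k \pi_n = \epsilon_k$.

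The only real obstacle is the bookkeeping around the shift $\tau_k$: one must verify that writing elements of $\sP_{as}^+$ in the factored form $\sum_i f_i(x_1,\ldots,x_k) h_i$ and taking componentwise limits is compatible with the finite-sum convergence data required by Definition \ref{defn6}. This is a routine verification using Remark \ref{useful defn of convergence}, since any convergent representation of an element of $\sP_{as}^+$ of the form described there already splits cleanly into a product of a polynomial part in $x_1,\ldots,x_k$ and a shifted almost-symmetric part, provided one takes the ``non-symmetric'' cutoff index to be at least $k$.
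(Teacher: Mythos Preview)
Your proposal is correct and follows exactly the approach the paper indicates: reduce to Proposition \ref{idempotents converge} via the index shift $\tau_k$, using that $\epsilon^{(n)}_k$ commutes with multiplication by $x_1,\ldots,x_k$. The paper's own proof is a one-line sketch of precisely this argument, and you have simply spelled out the bookkeeping in more detail.
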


\begin{proof}
    This follows immediately from Proposition \ref{idempotents converge} after shifting indices and noting that the operators $\epsilon^{(n)}_k$ commute with multiplication by $x_1,\dots, x_k$.
\end{proof}

Now we will extend our definition of the stable-limit symmetrization operator $\widetilde{\sigma}$ to partial symmetrization operators in the natural way.

\begin{defn}\label{defn 12}
    For $k \geq 0$ let $\widetilde{\sigma}_k: \sP_{as}^{+} \rightarrow \sP(k)^{+}$ be defined on $g \in \sP(n)^{+}$ for $n \geq k$ by 
    \begin{equation}
        \widetilde{\sigma}_k(g):= \partial_{-}^{(k+1)}\cdots \partial_{-}^{(n)}(g).
    \end{equation}
\end{defn}

\begin{remark*}
    The operators $\widetilde{\sigma}_k$ are well defined by Lemma \ref{lowering is a projection}. In particular, if $g \in \sP(\ell)^{+}$ for $0 \leq \ell \leq k$ then $\sP(\ell)^{+} \subset \sP(k)^{+}$ and there is no ambiguity in defining $\widetilde{\sigma}_k(g) = \partial_{-}^{(k+1)}\cdots \partial_{-}^{(n)}(g)$ as above. Note that $\widetilde{\sigma}_0 = \widetilde{\sigma}$. Further, for all $\mu \in \Compred$ and $\lambda \in \Par$ we see that in this new terminology 
    $$\widetilde{E}_{(\mu|\lambda)} = \widetilde{\sigma}_{\ell(\mu)}(\widetilde{E}_{\mu*\lambda}). $$ Further, if $k \leq \ell$ then $\widetilde{\sigma}_k\widetilde{\sigma}_{\ell} = \widetilde{\sigma}_k.$
\end{remark*}

We will now show that as operators on $\sP_{as}^{+}$, $\epsilon_{\ell} = \widetilde{\sigma}_{\ell}$ for all ${\ell} \geq 0$.

\begin{prop}\label{idempotent = HL}
    For all $\ell \geq 0$, $\epsilon_{\ell} = \widetilde{\sigma}_{\ell} .$
\end{prop}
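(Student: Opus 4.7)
The plan is to reduce the claim to the single-step identity $\partial_-^{(k)} = \epsilon_{k-1}|_{\sP(k)^+}$ for each $k \geq 1$, from which $\widetilde{\sigma}_\ell = \epsilon_\ell$ will follow by iteration combined with the standard parabolic Hecke idempotent absorption.

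To prove this single-step identity on $\sP(k)^+$, I would first check that both $\partial_-^{(k)}$ and $\epsilon_{k-1}|_{\sP(k)^+}$ are $\sP(k-1)^+$-linear maps into $\sP(k-1)^+$. The map $\partial_-^{(k)}$ is $\sP_{k-1}^+$-linear by Definition \ref{defn7} and $\Lambda[\mathfrak{X}_{k-1}]$-linear by Lemma \ref{lowering is a partial sym module map}; since $\sP(k-1)^+$ is generated as a ring by these two subrings, this gives $\sP(k-1)^+$-linearity. For $\epsilon_{k-1}$, each finite-variable $\epsilon^{(m)}_{k-1}$ commutes with multiplication by $x_1,\ldots,x_{k-1}$ (these variables are fixed by the symmetrizing subgroup) and with multiplication by symmetric polynomials in $x_k,\ldots,x_m$ (via Lemma \ref{sym x and y commute with T's}); these properties transfer to the limit through the Ion-Wu convergence framework. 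Next I would observe the identification $\sP(k)^+ = \sP(k-1)^+[x_k]$, which follows from the plethystic identity $p_i[\mathfrak{X}_k] = p_i[\mathfrak{X}_{k-1}] - x_k^i$. Hence it suffices to verify $\partial_-^{(k)}(x_k^n) = \epsilon_{k-1}(x_k^n)$ for all $n \geq 0$.

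The two resulting computations are direct: $\partial_-^{(k)}(x_k^n) = \mathscr{B}_n(1)[\mathfrak{X}_{k-1}] = \mathcal{P}_{(n)}[\mathfrak{X}_{k-1}] = h_n[(1-t)\mathfrak{X}_{k-1}]$ using the listed properties of $\mathcal{P}_\lambda$, while the shifted version of the formula established in the proof of Proposition \ref{idempotents converge} gives $\epsilon^{(m)}_{k-1}(x_k^n) = \frac{[m-k]_t!}{[m-k+1]_t!}\,P_{(n)}[x_k+\cdots+x_m;t]$, which converges $t$-adically to $(1-t)P_{(n)}[\mathfrak{X}_{k-1};t] = Q_{(n)}[\mathfrak{X}_{k-1};t] = h_n[(1-t)\mathfrak{X}_{k-1}]$, so the two sides agree.

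With the single-step identity in hand, I would finish by induction on $k - \ell$: the base case $k = \ell$ is immediate because both operators restrict to the identity on $\sP(\ell)^+$ (by Lemma \ref{lowering is a projection} and by idempotency, respectively), and the inductive step reads $\widetilde{\sigma}_\ell(g) = \widetilde{\sigma}_\ell(\partial_-^{(k)}(g)) = \epsilon_\ell(\partial_-^{(k)}(g)) = \epsilon_\ell(\epsilon_{k-1}(g)) = \epsilon_\ell(g)$ for $g \in \sP(k)^+$, using the single-step identity and the parabolic idempotent absorption $\epsilon_\ell \circ \epsilon_{k-1} = \epsilon_\ell$ (which comes from the nested Hecke identity $\epsilon^{(m)}_\ell\,\epsilon^{(m)}_{k-1} = \epsilon^{(m)}_\ell$ in finite variables, passed to the limit). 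I expect the main obstacle to be carefully transferring these finite-variable facts—particularly the idempotent absorption and the $\sP(k-1)^+$-linearity of $\epsilon_{k-1}$—into the stable-limit setting using Remark \ref{useful defn of convergence} and Proposition 6.21 of \cite{Ion_2022}, though these are routine once one writes out the continuity arguments explicitly.
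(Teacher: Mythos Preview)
Your argument is correct and takes a genuinely different route from the paper. The paper first reduces to $\ell = 0$ by an index shift and then, using that both $\epsilon$ and $\widetilde{\sigma}$ are $T_i$-equivariant $\Lambda$-module maps, reduces to checking $\epsilon(x^{\lambda}) = \widetilde{\sigma}(x^{\lambda})$ for all partitions $\lambda$; since $\epsilon(x^{\lambda}) = Q_{\lambda}[X;t]$ and $\widetilde{\sigma}(x^{\lambda}) = \mathcal{P}_{\lambda}[X]$, the paper then proves the full identity $\mathcal{P}_{\lambda} = Q_{\lambda}$ via an inductive plethystic computation with the Jing operators. Your approach instead isolates the single-step identity $\partial_{-}^{(k)} = \epsilon_{k-1}|_{\sP(k)^{+}}$, which only requires the one-row case $\mathcal{P}_{(n)} = Q_{(n)} = h_n[(1-t)X]$, and then iterates using the parabolic absorption $\epsilon_{\ell}\,\epsilon_{k-1} = \epsilon_{\ell}$. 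This is more modular and avoids the multivariable plethystic induction; on the other hand, the paper's route yields the identity $\mathcal{P}_{\lambda}[X] = Q_{\lambda}[X;t]$ for all $\lambda$ as a byproduct, which is of independent interest and is used elsewhere (e.g.\ in Corollary~\ref{alternative description of E's}). One minor point: you should make explicit that the finite-level absorption $\epsilon^{(m)}_{\ell}\,\epsilon^{(m)}_{k-1} = \epsilon^{(m)}_{\ell}$ follows from $\epsilon^{(m)}_{\ell} T_i = \epsilon^{(m)}_{\ell}$ for $i \geq \ell+1$ together with the Poincar\'e polynomial identity $\sum_{\sigma \in \mathfrak{S}_{m-k+1}} t^{\binom{m-k+1}{2} - \ell(\sigma)} = [m-k+1]_t!$, but this is indeed routine.
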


\begin{proof}
    By shifting indices it suffices to just prove that $\epsilon = \widetilde{\sigma}$, i.e., the $\ell = 0$ case. Further, since both maps are $T_i$-equivariant $\Lambda$-module maps (see Corollary \ref{lowering is a sym module map}) it suffices to show that for all partitions $\lambda$, $\epsilon(x^{\lambda}) = \widetilde{\sigma}(x^{\lambda}).$ From the proof of Proposition \ref{idempotents converge} we saw that $\epsilon(x^{\lambda}) = Q_{\lambda}[X;t]$ whereas it follows from the definition of the Jing vertex operators that $\widetilde{\sigma}(x^{\lambda}) = \mathcal{P}_{\lambda}[X]$. Therefore, it suffices to argue that $Q_{\lambda}[X;t] = \mathcal{P}_{\lambda}[X].$ To this end we will prove that 

    \begin{equation}\label{symmetrization-plthysm}
        \mathcal{P}_{\lambda}[X] = \langle z_1^{\lambda_1}\cdots z_r^{\lambda_r} \rangle Exp[(1-t)(z_1+\ldots+z_r)X]Exp[(t-1)\sum_{1\leq i < j \leq r} \frac{z_j}{z_i}]
    \end{equation}
 which by 2.15 in Macdonald Chapter 3 \cite{Macdonald} is an alternative definition for $Q_{\lambda}[X;t].$

Suppose $\lambda = (\lambda_1,\ldots, \lambda_r)$ is a partition.    
Note first that by definition $ \mathcal{P}_{\lambda}[X] = \mathscr{B}_{\lambda_1}\cdots \mathscr{B}_{\lambda_r}(1).$ We will now induct on the number of operators $\mathscr{B}$ acting on $1$ in the expression $\mathscr{B}_{\lambda_1}\cdots \mathscr{B}_{\lambda_r}(1).$ As a base case  
$$\mathscr{B}_{\lambda_r}(1) = \langle z_r ^{\lambda_r} \rangle 1[X-z_{r}^{-1}]Exp[(1-t)z_rX] = \langle z_r ^{\lambda_r} \rangle Exp[(1-t)z_rX].$$

We claim that for all $1\leq k \leq r$

\begin{equation} \label{symmetrization-plethysm-induction-equation}
    \mathscr{B}_{\lambda_k}\cdots \mathscr{B}_{\lambda_r}(1) = \langle z_{k}^{\lambda_{k}} \cdots z_r^{\lambda_r} \rangle Exp[(1-t)(z_k+\ldots +z_r)X]Exp[(t-1)\sum_{k\leq i<j\leq r}\frac{z_j}{z_i}].
\end{equation}

Suppose the above is true for some $1< k \leq r$. Then 

\begin{align*}
    &\mathscr{B}_{\lambda_{k-1}}\mathscr{B}_{\lambda_k}\cdots \mathscr{B}_{\lambda_r}(1)  \\
    &= \mathscr{B}_{\lambda_{k-1}} \left( \langle z_{k}^{\lambda_{k}} \cdots z_r^{\lambda_r} \rangle Exp[(1-t)(z_k+\ldots +z_r)X]Exp[(t-1)\sum_{k\leq i<j\leq r}\frac{z_j}{z_i}] \right) \\
   &=  \langle z_{k-1}^{\lambda_{k-1}} \rangle  \langle z_{k}^{\lambda_{k}} \cdots z_r^{\lambda_r} \rangle Exp[(1-t)(z_k+\ldots +z_r)(X-z_{k-1}^{-1})]Exp[(t-1)\sum_{k\leq i<j\leq r}\frac{z_j}{z_i}] Exp[(1-t)z_{k-1}X]. \\
\end{align*}

Now we use the additive property of the plethystic exponential namely, $Exp[A+B] = Exp[A]Exp[B]$, to rearrange terms and get
    $$ \langle z_{k-1}^{\lambda_{k-1}} \cdots z_r^{\lambda_r} \rangle Exp[(1-t)(z_k+\ldots +z_r)X] Exp[(1-t)z_{k-1}X]Exp[(t-1)\sum_{k\leq i<j\leq r}\frac{z_j}{z_i}]Exp[(t-1)(\frac{z_k}{z_{k-1}}+\ldots +\frac{z_r}{z_{k-1}})] $$
    which simplifies to 
$$ \langle z_{k-1}^{\lambda_{k-1}} \cdots z_r^{\lambda_r} \rangle Exp[(1-t)(z_{k-1}+z_k+\ldots +z_r)X] Exp[(t-1)\sum_{k-1\leq i<j\leq r}\frac{z_j}{z_i}] $$ showing that the formula (\ref{symmetrization-plethysm-induction-equation}) holds for all $1\leq k\leq r$. Taking $k=1$ shows equation (\ref{symmetrization-plthysm}) holds.
\end{proof}

As an immediate consequence of Proposition \ref{idempotent = HL} we find the following enlightening description for the $\widetilde{E}_{(\mu|\lambda)}$ functions.

\begin{cor}\label{alternative description of E's}\label{A lambdas are basis}
    For all $(\mu|\lambda)$ with $\mu$ a reduced composition and $\lambda$ a partition,
    \begin{equation}
        \widetilde{E}_{(\mu|\lambda)} = \lim_{n} \epsilon_{\ell(\mu)}^{(n)}(E_{\mu*\lambda*0^{n-(\ell(\mu)+\ell(\lambda))}}).
    \end{equation}
    In particular, for partitions $\lambda$, $\mathcal{A}_{\lambda}[X] = (1-t)^{\ell(\lambda)} v_{\lambda}(t)P_{\lambda}[X;q^{-1},t]$ where $P_{\lambda}[X;q^{-1},t]$ is the symmetric Macdonald function. As a consequence the set $\{ \mathcal{A}_{\lambda} : \lambda \in \Par \}$ is a basis of $\Lambda$.
\end{cor}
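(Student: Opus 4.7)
The plan is to prove the three assertions in order, with the first following directly from the continuity machinery already developed and the second reducing to a classical fact about non-symmetric Macdonald polynomials.

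For the first identity, I would simply unwind the definitions: by the remark after Definition \ref{defn 12}, $\widetilde{E}_{(\mu|\lambda)} = \widetilde{\sigma}_{\ell(\mu)}(\widetilde{E}_{\mu*\lambda})$, and by Proposition \ref{idempotent = HL} we may replace $\widetilde{\sigma}_{\ell(\mu)}$ with $\epsilon_{\ell(\mu)}$. Corollary \ref{partial idempotents converge} presents $\epsilon_{\ell(\mu)}$ as the limit of the finite-variable idempotents $\epsilon_{\ell(\mu)}^{(n)}$, while Corollary \ref{convergence of macdonald} presents $\widetilde{E}_{\mu*\lambda}$ as the limit of the sequence $(E_{\mu*\lambda*0^{n-(\ell(\mu)+\ell(\lambda))}})_{n}$. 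Applying the Ion–Wu continuity statement (Proposition 6.21 of \cite{Ion_2022}) then gives
\[
\widetilde{E}_{(\mu|\lambda)} \;=\; \epsilon_{\ell(\mu)}\bigl(\widetilde{E}_{\mu*\lambda}\bigr) \;=\; \lim_n \epsilon_{\ell(\mu)}^{(n)}\!\left(E_{\mu*\lambda*0^{n-(\ell(\mu)+\ell(\lambda))}}\right).
\]

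For the second identity, I would specialize to $\mu = \emptyset$, so that $\mathcal{A}_\lambda = \lim_n \epsilon^{(n)}(E_{\lambda*0^{n-\ell(\lambda)}})$. The heart of the argument is the classical fact that, because $\lambda * 0^{n-\ell(\lambda)}$ is Bruhat-minimal in its $\mathfrak{S}_n$-orbit, the $t$-symmetrization of the non-symmetric Macdonald polynomial produces a scalar multiple of the symmetric Macdonald polynomial. Using the intertwiner theory of $\mathscr{H}_n$ together with the triangularity of $E_{\lambda*0^{n-\ell(\lambda)}} = x^{\lambda*0^{n-\ell(\lambda)}} + (\text{lower in Bruhat})$, one obtains the identification
\[
\epsilon^{(n)}\bigl(E_{\lambda*0^{n-\ell(\lambda)}}\bigr) \;=\; \frac{[n-\ell(\lambda)]_t!}{[n]_t!}\, v_\lambda(t)\, P_\lambda\bigl[x_1+\cdots+x_n;\,q^{-1},t\bigr],
\]
mirroring exactly the Hall–Littlewood computation carried out in the proof of Proposition \ref{idempotents converge}; the leading coefficient $v_\lambda(t)$ arises from the stabilizer of $\lambda$ acting through the $T_i$'s with eigenvalue $1$. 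Taking $n\to\infty$, the $t$-adic limit $\frac{[n-\ell(\lambda)]_t!}{[n]_t!}\to (1-t)^{\ell(\lambda)}$ (as in the Hall–Littlewood case), while the symmetric Macdonald polynomials $P_\lambda[x_1+\cdots+x_n;q^{-1},t]$ converge to $P_\lambda[X;q^{-1},t]$ by the usual stability of symmetric Macdonald polynomials with respect to the inverse system $\pi_n$, yielding the claimed formula for $\mathcal{A}_\lambda$.

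The basis statement is then immediate: $\{P_\lambda[X;q^{-1},t] : \lambda \in \Par\}$ is a classical $\mathbb{Q}(q,t)$-basis of $\Lambda$, and the scalar $(1-t)^{\ell(\lambda)} v_\lambda(t)$ is a nonzero element of $\mathbb{Q}(q,t)$, so $\{\mathcal{A}_\lambda\}$ is a basis as well. The main technical hurdle I anticipate is cleanly establishing the scalar $\frac{[n-\ell(\lambda)]_t!}{[n]_t!} v_\lambda(t)$ in the finite-$n$ symmetrization identity; the most direct route is either to cite Macdonald's treatment in \cite{Macdonald} of the symmetrization map on non-symmetric Macdonalds at Bruhat-minimal weights, or alternatively to derive it from scratch by applying $\epsilon^{(n)}$ to the HHL formula of Theorem \ref{HHL} and matching the resulting sum against the Macdonald formula via the kernel $\Omega_n$ of Proposition \ref{idempotent from HL kernel}.
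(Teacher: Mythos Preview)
Your proposal is correct and follows exactly the route the paper intends. The paper does not include a written proof for this corollary; it is presented as an immediate consequence of Proposition \ref{idempotent = HL}, and your three-step unpacking---combining $\widetilde{\sigma}_{\ell(\mu)} = \epsilon_{\ell(\mu)}$ with Corollary \ref{partial idempotents converge}, Corollary \ref{convergence of macdonald}, and the Ion--Wu continuity statement for the first identity, then specializing and invoking the classical symmetrization of $E_{\lambda*0^{n-\ell(\lambda)}}$ to $P_\lambda$ for the second---is precisely the argument implicit in that attribution. Your identification of the finite-$n$ scalar $\frac{[n-\ell(\lambda)]_t!}{[n]_t!}v_\lambda(t)$ parallels the Hall--Littlewood computation in the proof of Proposition \ref{idempotents converge} exactly as the paper would expect, and the $q^{-1}$ in $P_\lambda[X;q^{-1},t]$ correctly reflects the paper's HHL convention from Theorem \ref{HHL}.
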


\begin{remark*}
    The $P_{\lambda}[X;q,t]$ are the symmetric Macdonald functions as defined by Macdonald in \cite{Macdonald} and seen in Cherednik's work \cite{C_2001} not to be confused with the modified symmetric Macdonald functions $\widetilde{H}_{\mu}$ seen in many places but in particular in the work of Haiman \cite{HVanish}. Further, Corollary \ref{alternative description of E's} gives an interpretation of the $\widetilde{E}_{(\mu|\lambda)}$ as limits of partially symmetrized non-symmetric Macdonald polynomials. Goodberry in \cite{Goodberry} and Lapointe in \cite{lapointe2022msymmetric} have investigated similar families of partially symmetric Macdonald polynomials. Up to a change of variables and limiting these different notions are likely directly related. 
\end{remark*}

In order to prove the first main theorem in this paper, Theorem \ref{main theorem}, we will require the following straightforward lemma. 

\begin{lem} \label{permuting and A lambdas}
For any composition $\mu$ there is some nonzero scalar $\gamma_{\mu} \in \mathbb{Q}(q,t)$ such that 
$$ \widetilde{\sigma}( \widetilde{E}_{\mu} ) = \gamma_{\mu} \mathcal{A}_{\sort(\mu)} $$
where $\gamma_{\mu} = 1$ when $\mu$ is a partition.
\end{lem}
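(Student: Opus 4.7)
The plan is to induct on the Bruhat distance from the minimal element in the $\mathfrak{S}_n$-orbit of $\mu$, using Corollary \ref{action of T on E mu} to propagate the relation from $\widetilde{E}_\beta$ to $\widetilde{E}_{s_i(\beta)}$, and using the key fact (from Proposition \ref{idempotent = HL} combined with Proposition \ref{idempotents converge}) that the stable-limit symmetrization operator $\widetilde{\sigma}=\epsilon$ satisfies $\epsilon T_i = \epsilon$ for all $i\geq 1$.

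For the base case, write $\mu$ as a permutation of $\nu * 0^m$ for $\nu = \sort(\mu)$ and $m = \ell(\mu)-\ell(\nu)$, so that $\nu * 0^m$ is the minimal element in the orbit (as in the proof of Theorem \ref{weight vector for all}). Using the shift-invariance $\widetilde{E}_{\nu * 0^m} = \widetilde{E}_{\nu}$ noted after Corollary \ref{convergence of macdonald} together with the definition $\mathcal{A}_{\nu} = \widetilde{\sigma}(\widetilde{E}_{\nu})$, we immediately obtain $\widetilde{\sigma}(\widetilde{E}_{\nu * 0^m}) = \mathcal{A}_{\sort(\mu)}$, with scalar $1$. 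When $\mu$ is itself a partition this already is the lemma with $\gamma_\mu = 1$.

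For the inductive step, assume the statement for some $\beta$ in the orbit of $\sort(\mu)$ with scalar $\gamma_\beta\neq 0$, and take a cover $s_i(\beta) > \beta$, which forces $\beta_i > \beta_{i+1}$. Apply $\widetilde{\sigma}$ to the identity in Corollary \ref{action of T on E mu}; since $\epsilon T_i = \epsilon$, we compute
\begin{equation*}
\widetilde{\sigma}(\widetilde{E}_{s_i(\beta)}) = \left(1 + \frac{(1-t)\widetilde{\alpha}_\beta(i+1)}{\widetilde{\alpha}_\beta(i) - \widetilde{\alpha}_\beta(i+1)}\right)\widetilde{\sigma}(\widetilde{E}_\beta) = \frac{\widetilde{\alpha}_\beta(i) - t\,\widetilde{\alpha}_\beta(i+1)}{\widetilde{\alpha}_\beta(i) - \widetilde{\alpha}_\beta(i+1)}\,\gamma_\beta\,\mathcal{A}_{\sort(\mu)}.
\end{equation*}
Since $\sort(s_i(\beta)) = \sort(\beta) = \sort(\mu)$, this gives the desired form for $\widetilde{E}_{s_i(\beta)}$.

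The main (mild) obstacle is ensuring the new scalar is nonzero as an element of $\mathbb{Q}(q,t)$. This is a direct check from the explicit formula in Lemma \ref{convergence of eigenvalues}: when $\beta_{i+1} = 0$ (and $\beta_i > 0$) the quotient collapses to $1$; when both $\beta_i, \beta_{i+1} > 0$, each $\widetilde{\alpha}_\beta(j)$ is a nonzero monomial $q^{\beta_j} t^{e_j}$, so both numerator $q^{\beta_i}t^{e_i} - q^{\beta_{i+1}}t^{e_{i+1}+1}$ and denominator $q^{\beta_i}t^{e_i} - q^{\beta_{i+1}}t^{e_{i+1}}$ vanish only if $\beta_i = \beta_{i+1}$, which contradicts $\beta_i > \beta_{i+1}$. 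Thus $\gamma_{s_i(\beta)}\neq 0$, completing the induction.
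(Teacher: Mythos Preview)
Your proof is correct and follows essentially the same approach as the paper's own argument: Bruhat induction starting from the partition case (using $\widetilde{E}_{\nu*0^m}=\widetilde{E}_{\nu}$), applying Corollary~\ref{action of T on E mu} together with the identity $\widetilde{\sigma}T_i=\widetilde{\sigma}$ from Proposition~\ref{idempotent = HL}, and verifying $\gamma_{s_i(\beta)}\neq 0$ via the explicit eigenvalue formula in Lemma~\ref{convergence of eigenvalues}. Your case analysis for the nonzero check (splitting on whether $\beta_{i+1}=0$) is slightly more explicit than the paper's one-line appeal to Lemma~\ref{convergence of eigenvalues}, but the content is identical.
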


\begin{proof}

We know that for all partitions $\lambda$,  $\widetilde{\sigma}( \widetilde{E}_{\lambda} ) = \mathcal{A}_{\lambda}$ so this lemma holds trivially for partitions. Now we proceed by induction on Bruhat order similarly to the argument in the proof of Theorem \ref{weight vector for all}. To show the lemma holds it suffices to show that if $\mu$ is a composition and $k$ such that $s_k(\mu) > \mu$ in Bruhat order and $\widetilde{\sigma}( \widetilde{E}_{\mu} ) = \gamma_{\mu} \mathcal{A}_{\sort(\mu)}$ for $\gamma_{\mu} \neq 0$ then $\widetilde{\sigma}( \widetilde{E}_{s_k(\mu)} ) = \gamma_{s_k(\mu)} \mathcal{A}_{\sort(\mu)}$ for $\gamma_{s_k(\mu)} \neq 0$. To this end fix such $\mu$ and $k$. Then by Corollary \ref{action of T on E mu}
$$\widetilde{E}_{s_k(\mu)} = \left( T_k + \frac{(1-t)\widetilde{\alpha}_{\mu}(k+1)}{\widetilde{\alpha}_{\mu}(k)- \widetilde{\alpha}_{\mu}(k+1)}\right) \widetilde{E}_{\mu}.$$
From Proposition \ref{idempotent = HL} $\widetilde{\sigma} = \lim_{m} \epsilon^{(m)}$ so that $\widetilde{\sigma}T_k = \widetilde{\sigma}$. Therefore, 

\begin{align*}
    \widetilde{\sigma}(\widetilde{E}_{s_k(\mu)})&= \widetilde{\sigma}\left( \left( T_k + \frac{(1-t)\widetilde{\alpha}_{\mu}(k+1)}{\widetilde{\alpha}_{\mu}(k)- \widetilde{\alpha}_{\mu}(k+1)}\right) \widetilde{E}_{\mu} \right) \\
    &= \left(1+ \frac{(1-t)\widetilde{\alpha}_{\mu}(k+1)}{\widetilde{\alpha}_{\mu}(k)- \widetilde{\alpha}_{\mu}(k+1)} \right) \widetilde{\sigma}(\widetilde{E}_{\mu}) \\
    &= \left( \frac{\widetilde{\alpha}_{\mu}(k) - t \widetilde{\alpha}_{\mu}(k+1)}{\widetilde{\alpha}_{\mu}(k) - \widetilde{\alpha}_{\mu}(k+1)} \right) \gamma_{\mu} \mathcal{A}_{\sort(\mu)}. \\
\end{align*}

By Lemma \ref{convergence of eigenvalues} we see that since $s_k(\mu) > \mu$ it follows that $\widetilde{\alpha}_{\mu}(k) \neq t \widetilde{\alpha}_{\mu}(k+1).$
Hence, $\gamma_{s_k(\mu)} := \left( \frac{\widetilde{\alpha}_{\mu}(k) - t \widetilde{\alpha}_{\mu}(k+1)}{\widetilde{\alpha}_{\mu}(k) - \widetilde{\alpha}_{\mu}(k+1)} \right) \gamma_{\mu} \neq 0$ so the result follows.
\end{proof}

\begin{remark*}
    Note that using the recursive formula $\gamma_{s_k(\mu)} = \left( \frac{\widetilde{\alpha}_{\mu}(k) - t \widetilde{\alpha}_{\mu}(k+1)}{\widetilde{\alpha}_{\mu}(k) - \widetilde{\alpha}_{\mu}(k+1)} \right) \gamma_{\mu}$ in the proof of Lemma \ref{permuting and A lambdas}, the formula for the eigenvalues $\widetilde{\alpha}_{\mu}(k)$ in Lemma \ref{convergence of eigenvalues}, and the base condition $\gamma_{\mu} = 1$ for $\mu$ a partition, it is possible to give an explicit expression for $\gamma_{\mu}$ for any composition $\mu.$ However, all we need for the purposes of this paper is that $\gamma_{\mu} \neq 0$ so we will not find such an explicit expression for $\gamma_{\mu}.$
\end{remark*}

\subsubsection{First Main Theorem and a Full $\y$-Weight Basis of $\sP_{as}^{+}$}

Finally, we prove that the stable-limit non-symmetric Macdonald functions are a basis for $\mathscr{P}_{as}^{+}$. To do this we will use the stable-limit symmetrization operator to help distinguish between stable-limit non-symmetric Macdonald functions with the same $\y$-weight. 

\begin{thm}(First Main Theorem) \label{main theorem}
The $\widetilde{E}_{(\mu|\lambda)}$ are a $\y$-weight basis for $\mathscr{P}_{as}^{+}$.
\end{thm}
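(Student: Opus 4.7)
The plan is to prove linear independence of $\{\widetilde{E}_{(\mu|\lambda)} : \mu \in \Compred,\ \lambda \in \Par\}$ and then deduce spanning via the dimension count stated after Definition \ref{defn5}: $\dim \sP(k)^{+}_d$ equals the number of pairs $(\mu,\lambda)$ with $\mu$ reduced, $\lambda$ a partition, $|\mu|+|\lambda|=d$, and $\ell(\mu) \leq k$. Since $\widetilde{E}_{(\mu|\lambda)} \in \sP(\ell(\mu))^{+}$ is homogeneous of degree $|\mu|+|\lambda|$, proving linear independence on each such finite-dimensional graded piece will immediately upgrade to a basis.

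To establish linear independence, I would decompose an arbitrary finite relation $\sum c_{\mu,\lambda}\widetilde{E}_{(\mu|\lambda)} = 0$ across $\y$-weight spaces, which is possible by Corollary \ref{stable macdonald are weight vectors} together with the commutativity of the $\y_i$. A first observation is that the composition $\mu$ is recovered from the weight $\widetilde{\alpha}_{(\mu|\lambda)}$: the length $\ell(\mu)$ is the largest $i$ with $\widetilde{\alpha}_{(\mu|\lambda)}(i) \neq 0$ (using $\mu_{\ell(\mu)} \neq 0$ since $\mu$ is reduced); the parts $\mu_i$ for $1 \leq i \leq \ell(\mu)$ with $\mu_i \neq 0$ are read off as the $q$-exponents of $\widetilde{\alpha}_{(\mu|\lambda)}(i) = q^{\mu_i}t^{\ell(\mu)+\ell(\lambda)+1-\beta_{\mu*\lambda}(i)}$; and the remaining positions $i \leq \ell(\mu)$ where the weight vanishes correspond to the zero parts of $\mu$. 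Consequently, each $\y$-weight space is spanned by $\widetilde{E}_{(\mu|\lambda)}$'s for a single fixed composition $\mu$ and some finite family of partitions $\lambda$, and the problem reduces to showing that a relation $\sum_\lambda c_\lambda \widetilde{E}_{(\mu|\lambda)} = 0$ with $\mu$ fixed forces every $c_\lambda = 0$.

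For this reduced problem I would apply the stable-limit symmetrization $\widetilde{\sigma}$. Combining $\widetilde{\sigma}\circ \widetilde{\sigma}_{\ell(\mu)} = \widetilde{\sigma}$ from the Remark after Definition \ref{defn 12} with Lemma \ref{permuting and A lambdas} gives
\[
\widetilde{\sigma}(\widetilde{E}_{(\mu|\lambda)}) = \widetilde{\sigma}(\widetilde{E}_{\mu*\lambda}) = \gamma_{\mu*\lambda}\,\mathcal{A}_{\sort(\mu*\lambda)},
\]
with $\gamma_{\mu*\lambda} \neq 0$. Applying $\widetilde{\sigma}$ to the relation turns it into $\sum_\lambda c_\lambda\gamma_{\mu*\lambda}\,\mathcal{A}_{\sort(\mu*\lambda)} = 0$ in $\Lambda$. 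For fixed $\mu$ the map $\lambda \mapsto \sort(\mu*\lambda)$ is injective, because $\sort(\mu*\lambda)$ records the multiset of nonzero parts of $\mu$ together with those of $\lambda$, from which $\lambda$ is recovered by removing the fixed multiset of nonzero entries of $\mu$. Since $\{\mathcal{A}_\nu : \nu \in \Par\}$ is a basis of $\Lambda$ by Corollary \ref{A lambdas are basis} and each $\gamma_{\mu*\lambda}$ is nonzero, every $c_\lambda$ vanishes, completing the linear independence argument; the dimension count then upgrades this to a basis.

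The main conceptual point is that the partial symmetrization $\widetilde{\sigma}$ already refines the $\y$-weight decomposition just enough to separate $\widetilde{E}_{(\mu|\lambda)}$ that share a weight. Once the identity $\widetilde{\sigma}(\widetilde{E}_{(\mu|\lambda)}) = \gamma_{\mu*\lambda}\mathcal{A}_{\sort(\mu*\lambda)}$ is in place, the injectivity of $\lambda \mapsto \sort(\mu*\lambda)$ for fixed $\mu$ together with the basis property of $\{\mathcal{A}_\nu\}$ closes everything out; I do not expect any serious obstacle beyond this bookkeeping, with the only delicate step being the recovery of $\mu$ from the weight.
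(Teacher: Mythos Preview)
Your proposal is correct and follows essentially the same approach as the paper: reduce to linear independence by the graded dimension count, use the $\y$-weight decomposition together with the observation that the weight determines $\mu$, then apply $\widetilde{\sigma}$ and invoke Lemma \ref{permuting and A lambdas}, the injectivity of $\lambda \mapsto \sort(\mu*\lambda)$, and Corollary \ref{A lambdas are basis}. The only difference is that you spell out explicitly how $\mu$ is recovered from the weight, whereas the paper simply cites Lemmas \ref{convergence of eigenvalues} and \ref{stable macdonald are weight vectors}.
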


\begin{proof}
As there are sufficiently many $\widetilde{E}_{(\mu|\lambda)}$ in each graded component of every $\mathscr{P}(k)^{+}$ it suffices to show that these functions are linearly independent. Certainly, weight vectors in distinct weight spaces are linearly independent. Using Lemmas \ref{convergence of eigenvalues} and \ref{stable macdonald are weight vectors}, we deduce that if $\widetilde{E}_{(\mu^{(1)}|\lambda^{(1)})}$ and $\widetilde{E}_{(\mu^{(2)}|\lambda^{(2)})}$ have the same weight then necessarily $\mu^{(1)} = \mu^{(2)}$. Hence, we can restrict to the case where we
have a dependence relation
$$ c_1\widetilde{E}_{(\mu|\lambda^{(1)})} +\ldots + c_N \widetilde{E}_{(\mu|\lambda^{(N)})} = 0 $$
for $\lambda^{(1)},\ldots,\lambda^{(N)}$ distinct partitions.
By applying the stable-limit symmetrization operator we see that 
$$ \widetilde{\sigma}(c_1\widetilde{E}_{(\mu|\lambda^{(1)})} +\ldots + c_N \widetilde{E}_{(\mu|\lambda^{(N)})} ) = \widetilde{\sigma}(c_1\widetilde{E}_{\mu*\lambda^{(1)}} +\ldots+c_N \widetilde{E}_{\mu*\lambda^{(N)}}) = 0 .$$
Now by Lemma \ref{permuting and A lambdas}, $\widetilde{\sigma}(\widetilde{E}_{\mu*\lambda^{(i)}}) = \gamma_{\mu*\lambda^{(i)}}  \mathcal{A}_{\sort(\mu*\lambda^{(i)})} $ with nonzero scalars  $\gamma_{\mu*\lambda^{(i)}}$ yielding 
$$ 0 = c'_1\mathcal{A}_{\sort(\mu*\lambda^{(1)})} +\ldots+ c'_n \mathcal{A}_{\sort(\mu*\lambda^{(N)})}.$$ The partitions $\lambda^{(i)}$ are distinct so we know that the partitions $\sort(\mu*\lambda^{(i)})$ are distinct as well. By Corollary \ref{A lambdas are basis} the symmetric functions $\mathcal{A}_{\sort(\mu*\lambda^{(i)})}$ are linearly independent. Thus $c'_i = 0$ implying $c_i = 0$ for all $1 
\leq i \leq N$ as desired.
\end{proof}

\section{Some Recurrence Relations for the $\widetilde{E}_{(\mu|\lambda)}$}\label{recurrence relations section}

In this section we will discuss a few recurrence relations for the stable-limit non-symmetric Macdonald functions. We start by looking at the action of the Demazure-Lusztig operators $T_i$ and the lowering operators $\partial_{-}$.

\begin{prop}
    For a reduced composition $\mu = (\mu_1,\dots,\mu_r)$ and partition $\lambda =(\lambda_1,\dots,\lambda_k)$ if $\mu_r \geq \lambda_1$ and $\mu_{r-1} \neq 0$ then 
    $$ \partial_{-}^{(r)} \left( \widetilde{E}_{(\mu_1,\dots,\mu_r|\lambda_1,\dots,\lambda_k)} \right) = \widetilde{E}_{(\mu_1,\dots,\mu_{r-1}|\mu_r,\lambda_1,\dots,\lambda_k)}.$$
\end{prop}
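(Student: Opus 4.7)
The proof is essentially an unfolding of Definition \ref{defn9}, with the two hypotheses on $\mu_r$ and $\mu_{r-1}$ serving only to ensure that the right-hand side is well-formed as a stable-limit non-symmetric Macdonald function $\widetilde{E}_{(\mu'|\lambda')}$ in the sense of that definition. The plan is simply to identify both sides with the same iterated composition of lowering operators applied to $\widetilde{E}_{\mu*\lambda}$.

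First I would apply Definition \ref{defn9} to the left-hand side: since $\ell(\mu) = r$ and $\ell(\lambda) = k$, we have
$$ \widetilde{E}_{(\mu_1,\dots,\mu_r|\lambda_1,\dots,\lambda_k)} = \partial_{-}^{(r+1)}\cdots \partial_{-}^{(r+k)}\, \widetilde{E}_{\mu*\lambda}, $$
so that
$$ \partial_{-}^{(r)}\widetilde{E}_{(\mu_1,\dots,\mu_r|\lambda_1,\dots,\lambda_k)} = \partial_{-}^{(r)}\partial_{-}^{(r+1)}\cdots \partial_{-}^{(r+k)}\,\widetilde{E}_{\mu*\lambda}. $$

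Next I would turn to the right-hand side and set $\mu' := (\mu_1,\dots,\mu_{r-1})$ and $\lambda' := (\mu_r,\lambda_1,\dots,\lambda_k)$. Here I would invoke the two hypotheses: the condition $\mu_{r-1} \neq 0$ ensures that $\mu'$ is reduced (so $\mu' \in \Compred$), and the condition $\mu_r \geq \lambda_1$ together with $\lambda$ being a partition ensures that $\lambda' \in \Par$. Hence $\widetilde{E}_{(\mu'|\lambda')}$ is a bona fide stable-limit Macdonald function as defined in Definition \ref{defn9}. Noting that $\ell(\mu') = r-1$, $\ell(\lambda') = k+1$, and crucially that $\mu'*\lambda' = (\mu_1,\dots,\mu_{r-1},\mu_r,\lambda_1,\dots,\lambda_k) = \mu*\lambda$, Definition \ref{defn9} gives
$$ \widetilde{E}_{(\mu_1,\dots,\mu_{r-1}|\mu_r,\lambda_1,\dots,\lambda_k)} = \partial_{-}^{(r)}\partial_{-}^{(r+1)}\cdots \partial_{-}^{(r+k)}\,\widetilde{E}_{\mu*\lambda}. $$

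Comparing the two displays shows they agree. There is no real obstacle here; the only thing to watch is that both hypotheses are needed precisely to bring the right-hand side into the format $\widetilde{E}_{(\mu'|\lambda')}$ with $\mu' \in \Compred$ and $\lambda' \in \Par$. If $\mu_r < \lambda_1$, then $\lambda'$ would fail to be a partition, and if $\mu_{r-1} = 0$, then $\mu'$ would fail to be reduced; in either case the right-hand side would not have been defined as a stable-limit Macdonald function in Definition \ref{defn9} (even though the same underlying element of $\sP_{as}^{+}$ would still be obtained by the string of $\partial_{-}$'s).
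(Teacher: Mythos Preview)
Your proof is correct and follows exactly the approach the paper takes: the paper's own proof is the single line ``This follows immediately from the definitions of $\widetilde{E}_{(\mu|\lambda)}$ and $\partial_{-}^{(r)}$,'' and your argument is precisely the unfolding of that sentence. Your explicit identification of the role of the two hypotheses (ensuring $\mu'$ is reduced and $\lambda'$ is a partition so that the right-hand side is a bona fide $\widetilde{E}_{(\mu'|\lambda')}$) is a welcome clarification.
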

\begin{proof}
    This follows immediately from the definitions of $\widetilde{E}_{(\mu|\lambda)}$ and $\partial_{-}^{(r)}.$
\end{proof}

\begin{prop}
Take $\mu \in \Compred$ and $\lambda \in \Par$ and suppose $1\leq i \leq \ell(\mu) -1$ such that $s_i(\mu) > \mu$ and $s_i(\mu) \in \Compred.$  Then 
$$\widetilde{E}_{(s_i(\mu)|\lambda)} = 
  \left( T_i + \frac{(1-t)\widetilde{\alpha}_{\mu*\lambda}(i+1)}{\widetilde{\alpha}_{\mu*\lambda}(i)- \widetilde{\alpha}_{\mu*\lambda}(i+1)} \right)  \widetilde{E}_{(\mu|\lambda)}.$$
\end{prop}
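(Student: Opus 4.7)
The plan is to reduce this recurrence to the analogous recurrence at the level of $\widetilde{E}_{\nu}$ already established in Corollary \ref{action of T on E mu}, by using the definition $\widetilde{E}_{(\mu|\lambda)} = \partial_{-}^{(\ell(\mu)+1)}\cdots\partial_{-}^{(\ell(\mu)+\ell(\lambda))}\widetilde{E}_{\mu*\lambda}$ and commuting $T_i$ past the lowering operators.

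First I would verify the combinatorial compatibility. Since $i \leq \ell(\mu)-1$, the transposition $s_i$ acts only on positions within $\mu$, so $s_i(\mu)*\lambda = s_i(\mu*\lambda)$; in particular $s_i(\mu*\lambda)$ and $\mu*\lambda$ agree outside positions $i, i+1$, so the Bruhat hypothesis $s_i(\mu) > \mu$ implies $s_i(\mu*\lambda) > \mu*\lambda$. Then Corollary \ref{action of T on E mu} applied to the composition $\mu*\lambda$ gives
\[
\widetilde{E}_{s_i(\mu*\lambda)} = \left(T_i + \frac{(1-t)\widetilde{\alpha}_{\mu*\lambda}(i+1)}{\widetilde{\alpha}_{\mu*\lambda}(i)-\widetilde{\alpha}_{\mu*\lambda}(i+1)}\right)\widetilde{E}_{\mu*\lambda}.
\]

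Next I would apply $\partial_{-}^{(\ell(\mu)+1)}\cdots\partial_{-}^{(\ell(\mu)+\ell(\lambda))}$ to both sides. The left-hand side becomes $\widetilde{E}_{(s_i(\mu)|\lambda)}$ by definition (using that $s_i(\mu) \in \Compred$ has length $\ell(\mu)$). For the right-hand side, I use the key commutation relation $T_i \partial_{-}^{(k)} = \partial_{-}^{(k)} T_i$ whenever $1 \leq i \leq k-2$. This is the relation stated as $T_i d_{-} = d_{-} T_i$ at vertex $k$ in the list of $\mathbb{A}_{t,q}$ relations, transported to $\sP_{\bullet}$ via the Ion-Wu isomorphism. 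Each lowering operator appearing in the composition is $\partial_{-}^{(j)}$ with $\ell(\mu)+1 \leq j \leq \ell(\mu)+\ell(\lambda)$, and the tightest constraint $i \leq j-2$ occurs at $j = \ell(\mu)+1$, where it reads $i \leq \ell(\mu)-1$ --- precisely the hypothesis. Thus $T_i$ commutes with each $\partial_{-}^{(j)}$, and the scalar second term commutes trivially (it is $\sP_{as}^{+}$-linear constant).

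Carrying out the commutation yields
\[
\widetilde{E}_{(s_i(\mu)|\lambda)} = \left(T_i + \frac{(1-t)\widetilde{\alpha}_{\mu*\lambda}(i+1)}{\widetilde{\alpha}_{\mu*\lambda}(i)-\widetilde{\alpha}_{\mu*\lambda}(i+1)}\right) \partial_{-}^{(\ell(\mu)+1)}\cdots\partial_{-}^{(\ell(\mu)+\ell(\lambda))}\widetilde{E}_{\mu*\lambda},
\]
and the rightmost factor is exactly $\widetilde{E}_{(\mu|\lambda)}$. There is essentially no obstacle beyond checking these two bookkeeping points (the Bruhat compatibility and the index bound for commutation); the main technical input --- the $\mathbb{A}_{t,q}$ commutation $T_i d_{-} = d_{-} T_i$ --- is already available, and the tightness of the bound $i \leq \ell(\mu)-1$ against the constraint on $\partial_{-}^{(\ell(\mu)+1)}$ is what makes the hypothesis of the proposition exactly the right one.
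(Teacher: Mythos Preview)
Your proposal is correct and follows essentially the same approach as the paper: apply Corollary \ref{action of T on E mu} to $\mu*\lambda$, then push the lowering operators $\partial_{-}^{(\ell(\mu)+1)},\ldots,\partial_{-}^{(\ell(\mu)+\ell(\lambda))}$ through both sides using the commutation of $T_i$ with each $\partial_{-}^{(j)}$. Your write-up is in fact slightly more detailed than the paper's in justifying the commutation via the $\mathbb{A}_{t,q}$ relation $T_i d_{-} = d_{-} T_i$ and checking the index bound $i \leq \ell(\mu)-1$ against the smallest $j = \ell(\mu)+1$.
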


\begin{proof}
Since $s_i(\mu) > \mu$ we know that $s_i(\mu*\lambda) > \mu*\lambda$ so by Corollary \ref{action of T on E mu}
$$\widetilde{E}_{s_i(\mu*\lambda)} = 
  \left( T_i + \frac{(1-t)\widetilde{\alpha}_{\mu*\lambda}(i+1)}{\widetilde{\alpha}_{\mu*\lambda}(i)- \widetilde{\alpha}_{\mu*\lambda}(i+1)} \right)  \widetilde{E}_{\mu*\lambda}. $$
Now we know $T_i$ commutes with the operators $\partial_{-}^{(\ell(\mu)+1)},\dots , \partial_{-}^{(\ell(\mu)+\ell(
\lambda))}$ and thus we see that 
\begin{align*}
    \widetilde{E}_{(s_i(\mu)|\lambda)}&= \partial_{-}^{(\ell(\mu)+1)}\cdots \partial_{-}^{(\ell(\mu)+\ell(
\lambda))} (\widetilde{E}_{s_i(\mu*\lambda)}) \\
&= \partial_{-}^{(\ell(\mu)+1)}\cdots \partial_{-}^{(\ell(\mu)+\ell(
\lambda))} \left( \left( T_i + \frac{(1-t)\widetilde{\alpha}_{\mu*\lambda}(i+1)}{\widetilde{\alpha}_{\mu*\lambda}(i)- \widetilde{\alpha}_{\mu*\lambda}(i+1)} \right)  \widetilde{E}_{\mu*\lambda}\right) \\
&= \left( T_i + \frac{(1-t)\widetilde{\alpha}_{\mu*\lambda}(i+1)}{\widetilde{\alpha}_{\mu*\lambda}(i)- \widetilde{\alpha}_{\mu*\lambda}(i+1)} \right) \partial_{-}^{(\ell(\mu)+1)}\cdots \partial_{-}^{(\ell(\mu)+\ell(
\lambda))}(\widetilde{E}_{\mu*\lambda}) \\
&= \left( T_i + \frac{(1-t)\widetilde{\alpha}_{\mu*\lambda}(i+1)}{\widetilde{\alpha}_{\mu*\lambda}(i)- \widetilde{\alpha}_{\mu*\lambda}(i+1)} \right) \widetilde{E}_{(\mu|\lambda)}.
\end{align*}

\end{proof}

\begin{prop}
    For $\mu = (\mu_1,\dots, \mu_r) \in \Compred$ and $\lambda \in \Par$ we have that
    $$T_r\widetilde{E}_{(\mu|\lambda)} = \frac{\gamma_{\mu*\lambda}}{\gamma_{(\mu_1,\ldots,\mu_{r-1},0,\mu_r)*\lambda}} \widetilde{E}_{(\mu_1,\dots,\mu_{r-1},0,\mu_r|\lambda)}.$$
\end{prop}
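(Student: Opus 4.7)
The plan is as follows. Since $\mu$ is reduced we have $\mu_r \neq 0$, and setting $\mu'' := (\mu_1,\ldots,\mu_{r-1},0,\mu_r)$ gives $\mu'' \in \Compred$ with $\ell(\mu'') = r+1$. By Corollary \ref{stable macdonald are weight vectors}, $v := \widetilde{E}_{(\mu|\lambda)}$ is a $\y$-weight vector with weight $\alpha := \widetilde{\alpha}_{(\mu|\lambda)}$ satisfying $\alpha_r \neq 0$ and $\alpha_{r+1} = 0$. The first step is to verify, by direct computation with the $\beta$-formula, that $\widetilde{\alpha}_{(\mu''|\lambda)} = s_r(\alpha)$. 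The key identity is $\beta_{\mu''*\lambda}(r+1) = \beta_{\mu*\lambda}(r) + 1$, which accounts for the inserted zero at position $r$; at each position $i < r$ with $\mu_i \neq 0$ one similarly gets $\beta_{\mu''*\lambda}(i) = \beta_{\mu*\lambda}(i)+1$, which exactly compensates for the shift $\ell(\mu'') = \ell(\mu)+1$ in the $t$-exponent so that $\widetilde{\alpha}_{(\mu''|\lambda)}(i) = \widetilde{\alpha}_{(\mu|\lambda)}(i)$; positions $i > r+1$ vanish on both sides.

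Next, following the intertwiner approach underlying Corollary \ref{action of T on E mu}, consider $\phi_r := T_r \y_r - \y_r T_r = T_r(\y_r - \y_{r+1}) + (1-t)\y_{r+1}$. Since the stable-limit generators $T_i,\y_i$ satisfy the same relations \eqref{def-i} and \eqref{def-iii} as in the finite-rank DAHA, $\phi_r$ inherits the intertwiner property $\phi_r \y_i = \y_{s_r(i)}\phi_r$ and hence maps $\alpha$-weight vectors to $s_r(\alpha)$-weight vectors. Applying $\phi_r$ to $v$ and using $\alpha_{r+1}=0$ yields $\phi_r v = \alpha_r T_r v$, so $T_r v = \alpha_r^{-1}\phi_r v$ lies in the $\widetilde{\alpha}_{(\mu''|\lambda)}$-weight space (it is nonzero since $T_r$ is invertible by its quadratic relation).

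By Theorem \ref{main theorem}, together with the fact (established in the course of its proof) that the weight of a basis element $\widetilde{E}_{(\beta|\nu)}$ determines $\beta$, one may write $T_r v = \sum_{\lambda'} c_{\lambda'} \widetilde{E}_{(\mu''|\lambda')}$ summed over partitions $\lambda'$ with $|\lambda'|=|\lambda|$ and $\widetilde{\alpha}_{(\mu''|\lambda')} = \widetilde{\alpha}_{(\mu''|\lambda)}$. To identify the coefficients, apply $\widetilde{\sigma}$ to both sides: using $\widetilde{\sigma}T_r = \widetilde{\sigma}$ from Proposition \ref{idempotent = HL}, Lemma \ref{permuting and A lambdas}, and $\sort(\mu''*\lambda') = \sort(\mu*\lambda')$, the left-hand side is $\gamma_{\mu*\lambda}\mathcal{A}_{\sort(\mu*\lambda)}$ and the right-hand side is $\sum_{\lambda'} c_{\lambda'}\gamma_{\mu''*\lambda'}\mathcal{A}_{\sort(\mu*\lambda')}$. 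Since distinct $\lambda'$ yield distinct partitions $\sort(\mu*\lambda')$ and the $\mathcal{A}_\nu$ are linearly independent by Corollary \ref{A lambdas are basis}, only $\lambda'=\lambda$ survives, giving $c_\lambda = \gamma_{\mu*\lambda}/\gamma_{\mu''*\lambda}$ as desired. The main technical obstacle is the careful $\beta$-bookkeeping in the first step; the remaining steps are clean applications of the weight-basis machinery and symmetrization already developed in the paper.
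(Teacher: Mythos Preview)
Your proof is correct and follows essentially the same approach as the paper's: compute $\varphi_r \widetilde{E}_{(\mu|\lambda)} = \widetilde{\alpha}_{\mu*\lambda}(r)\, T_r \widetilde{E}_{(\mu|\lambda)}$ using that the $(r+1)$-st weight vanishes, identify the resulting weight as that of $\widetilde{E}_{(\mu''|\lambda)}$, expand in the basis as a sum over partitions $\lambda'$, and then apply $\widetilde{\sigma}$ together with Lemma~\ref{permuting and A lambdas} and Corollary~\ref{A lambdas are basis} to pin down the single surviving term. Your explicit verification of $\widetilde{\alpha}_{(\mu''|\lambda)} = s_r(\widetilde{\alpha}_{(\mu|\lambda)})$ via the identity $\beta_{\mu''*\lambda}(i) = \beta_{\mu*\lambda}(i)+1$ is a slightly more detailed version of what the paper does by appeal to ``the explicit formula for the eigenvalues,'' but the logical structure is the same.
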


\begin{proof}
First note that by Corollary \ref{stable macdonald are weight vectors}
$$\varphi_r(\widetilde{E}_{(\mu|\lambda)}) = (T_r(\y_r-\y_{r+1})+(1-t)\y_{r+1})\widetilde{E}_{(\mu|\lambda)} = (\widetilde{\alpha}_{\mu*\lambda}(r)-0)T_r\widetilde{E}_{(\mu|\lambda)} + (1-t)(0)\widetilde{E}_{(\mu|\lambda)} = \widetilde{\alpha}_{\mu*\lambda}(r)T_r\widetilde{E}_{(\mu|\lambda)}$$
and by Lemma \ref{convergence of eigenvalues} $\widetilde{\alpha}_{\mu*\lambda}(r) \neq 0$ since $\mu_r \neq 0$. Therefore, $\varphi_r(\widetilde{E}_{(\mu|\lambda)})$ is nonzero and therefore must be a $\y$-weight vector with weight $(\widetilde{\alpha}_{\mu*\lambda}(1),\dots,\widetilde{\alpha}_{\mu*\lambda}(r-1),0,\widetilde{\alpha}_{\mu*\lambda}(r), 0,\dots)$. By using the explicit formula for the eigenvalues $\widetilde{\alpha}_{\mu*\lambda}(i)$ from Lemma \ref{convergence of eigenvalues} we see that for $1 \leq i \leq r$, $\widetilde{\alpha}_{\mu*\lambda}(i) = 0$ exactly when $\mu_i = 0$ and further, for all $1 \leq i \leq r$ with $\mu_i \neq 0$, $\widetilde{\alpha}_{\mu*\lambda}(i) = q^{\mu_i}t^{b_i}$ for some $b_i$. Hence by Theorem \ref{main theorem} and Corollary \ref{stable macdonald are weight vectors}, $\varphi_r(\widetilde{E}_{(\mu|\lambda)})$ is of the form 
$$\varphi_r(\widetilde{E}_{(\mu|\lambda)}) = \sum_{\nu} a_{\nu} \widetilde{E}_{(\mu_1,\dots,\mu_{r-1},0,\mu_r|\nu)}$$
$\nu$ ranges over a finite set of partitions $\nu$ and $a_{\nu}$ are some scalars. 
Note that we have 
$$\widetilde{\sigma}(\varphi_r(\widetilde{E}_{(\mu|\lambda)})) =  \widetilde{\sigma}(\widetilde{\alpha}_{\mu*\lambda}(r)T_r\widetilde{E}_{(\mu|\lambda)}) $$ and since $\widetilde{\sigma}T_r = \widetilde{\sigma}$ $$\widetilde{\sigma}(\varphi_r(\widetilde{E}_{(\mu|\lambda)}))= \widetilde{\alpha}_{\mu*\lambda}(r)\widetilde{\sigma}(\widetilde{E}_{(\mu|\lambda)}) = \widetilde{\alpha}_{\mu*\lambda}(r) \gamma_{\mu*\lambda} \mathcal{A}_{\sort(\mu*\lambda)}$$ 
using Lemma \ref{permuting and A lambdas}. 
Similarly, we see that 
$$\widetilde{\sigma}\left( \sum_{\nu} a_{\nu} \widetilde{E}_{(\mu_1,\dots,\mu_{r-1},0,\mu_r|\nu)}   \right) = \sum_{\nu} a_{\nu} \gamma_{(\mu_1,\dots,\mu_{r-1},0,\mu_r)*\nu} \mathcal{A}_{\sort(\mu*\nu)}$$
since $\sort((\mu_1,\ldots,\mu_{r-1},0,\mu_r)*\nu) = \sort(\mu*\nu)$ for all $\nu.$

Thus $$\mathcal{A}_{\sort(\mu*\lambda)} = \sum_{\nu} a_{\nu}' \mathcal{A}_{\sort(\mu*\nu)} $$
where $$a'_{\nu}:= \frac{a_{\nu}\gamma_{(\mu_1,\dots,\mu_{r-1},0,\mu_r)*\nu}}{\widetilde{\alpha}_{\mu*\lambda}(r)\gamma_{\mu*\lambda}}.$$ By Corollary \ref{A lambdas are basis} we know that the $\mathcal{A}_{\beta}$ are a basis for $\Lambda$ and so we see that the only possible partition $\nu$ that can contribute a nonzero term in the above expansion is $\nu = \lambda$. Further, $a'_{\lambda} = 1$ and thus $a_{\lambda} = \frac{\widetilde{\alpha}_{\mu*\lambda}(r)\gamma_{\mu*\lambda}}{\gamma_{(\mu_1,\ldots,\mu_{r-1},0,\mu_r)*\lambda}}.$

Therefore, 
$$\varphi_{r}(\widetilde{E}_{(\mu|\lambda)}) = \widetilde{\alpha}_{\mu*\lambda}(r) T_r \widetilde{E}_{(\mu|\lambda)} = \frac{\widetilde{\alpha}_{\mu*\lambda}(r)\gamma_{\mu*\lambda}}{\gamma_{(\mu_1,\ldots,\mu_{r-1},0,\mu_r)*\lambda}} \widetilde{E}_{(\mu_1,\dots,\mu_{r-1},0,\mu_r|\lambda)}$$ which yields 
$$T_r\widetilde{E}_{(\mu|\lambda)} = \frac{\gamma_{\mu*\lambda}}{\gamma_{(\mu_1,\ldots,\mu_{r-1},0,\mu_r)*\lambda}} \widetilde{E}_{(\mu_1,\dots,\mu_{r-1},0,\mu_r|\lambda)}.$$
\end{proof}

\begin{defn}\label{defn 10}
 Define $\widetilde{\omega}_m := X_1T_1^{-1}\cdots T_{m-1}^{-1}$ considered as an operator on $\sP_{m}^{+}$.
\end{defn}

\begin{remark*}
    These operators are the same as the corresponding operators of the same name defined by Ion and Wu up to inversion and some scalars. We have defined the operators as above for convenience. The operators $\omega_m$ and $\widetilde{\omega}_m$ are used by Ion and Wu \cite{Ion_2022} to give operators analogous to the $d_{+},d_{+}^{*}$ operators in $\mathbb{A}_{t,q}$.
\end{remark*}

\begin{lem}\label{pis converges}
    The sequences of operators $(\widetilde{\omega}_m)_{m\geq 1}$ and $(\omega_{m}^{-1})_{m \geq 1}$ converge to operators $\widetilde{\omega}, \omega^{*}: \sP_{as}^{+} \rightarrow \sP_{as}^{+}$ respectively with actions given by 
    \begin{itemize}
        \item $\widetilde{\omega}(x_1^{a_1}\cdots x_k^{a_k}F[X]) = x_1T_1^{-1}\cdots T_{k}^{-1}x_1^{a_1}\cdots x_k^{a_k}F[X]$ \label{tilde omega action}
        \item $\omega^{*}(x_1^{a_1}\cdots x_k^{a_k}F[X]) = x_2^{a_1}\cdots x_{k+1}^{a_k}F[X+(q-1)x_1].$
    \end{itemize}

\end{lem}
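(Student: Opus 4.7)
The plan is to verify convergence of each sequence directly on the $\mathbb{Q}(q,t)$-spanning set of $\sP_{as}^+$ consisting of elements $f = x_1^{a_1}\cdots x_k^{a_k}F[X]$, and to match the data required by Remark \ref{useful defn of convergence}. Since both $\widetilde{\omega}_m$ and $\omega_m^{-1}$ are $\mathbb{Q}(q,t)$-linear, convergence on this spanning set extends to convergence on all of $\sP_{as}^+$.

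For $\widetilde{\omega}$, fix $m \geq k+1$ and unpack
\[ \widetilde{\omega}_m(\pi_m(f)) = x_1 T_1^{-1}\cdots T_{m-1}^{-1}\bigl(x_1^{a_1}\cdots x_k^{a_k} F[x_1+\cdots+x_m]\bigr). \]
By Lemma \ref{sym x and y commute with T's}, the symmetric polynomial $F[x_1+\cdots+x_m]$ commutes with each $T_j$ for $j \leq m-1$, so it may be pulled to the front. For $j \geq k+1$ the operator $T_j$ fixes the monomial $x_1^{a_1}\cdots x_k^{a_k}$ (it acts in neither of the swapped variables and the divided-difference term vanishes), hence so does $T_j^{-1}$. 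The expression therefore collapses to
\[ \widetilde{\omega}_m(\pi_m(f)) = F[x_1+\cdots+x_m]\cdot x_1 T_1^{-1}\cdots T_k^{-1}\bigl(x_1^{a_1}\cdots x_k^{a_k}\bigr), \]
a product of an $m$-independent polynomial in $x_1,\ldots,x_{k+1}$ with a tower that stabilizes in the inverse system to $F[X]$. This fits the form in Remark \ref{useful defn of convergence} and yields the stated action of $\widetilde{\omega}$.

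For $\omega^*$, use the explicit action $\omega_m^{-1}g(x_1,\ldots,x_m) = g(x_2,\ldots,x_m,qx_1)$ to obtain, for $m \geq k+1$,
\[ \omega_m^{-1}(\pi_m(f)) = x_2^{a_1}\cdots x_{k+1}^{a_k}\, F\bigl[(x_1+\cdots+x_m) + (q-1)x_1\bigr]. \]
Applying the $\Lambda$-coproduct as a finite sum $F[A+B] = \sum_{i=1}^N G_i[A]H_i[B]$ with $G_i,H_i \in \Lambda$, and observing that a symmetric function of a single variable collapses to a scalar multiple of a power of that variable (so $H_i[(q-1)x_1] = c_i(q-1)^{\deg H_i}x_1^{\deg H_i}$), one rewrites the expression as
\[ \sum_{i=1}^N c_i(q-1)^{\deg H_i}\, x_2^{a_1}\cdots x_{k+1}^{a_k}\, x_1^{\deg H_i}\, G_i[x_1+\cdots+x_m]. \]
This is a finite sum whose scalar coefficients are constant in $m$ and whose polynomial factors stabilize in the inverse system, exactly the shape demanded by Remark \ref{useful defn of convergence}. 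Recombining the limit via the coproduct reproduces $\omega^*(f) = x_2^{a_1}\cdots x_{k+1}^{a_k} F[X+(q-1)x_1]$.

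The proof is mostly mechanical; the only mildly delicate point is ensuring the coproduct expansion in the $\omega^*$ calculation produces a genuinely finite sum (so that the convergence really is in the sense of Definition \ref{defn6} rather than merely in $\sP_\infty^+$), which is immediate from the fact that the $\Lambda$-coproduct is a finite sum in each degree. Once this bookkeeping is in place, the two limits land in $\sP_{as}^+$ (indeed in $\sP(k+1)^+$) and the identification of the operators $\widetilde{\omega}, \omega^*$ follows.
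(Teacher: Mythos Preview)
Your proof is correct and follows essentially the same route as the paper's: for $\widetilde{\omega}$ you exploit that $T_j^{-1}$ acts trivially on the relevant polynomials once $j>k$, and for $\omega^*$ you compute the explicit cyclic substitution and check the result lies in $\sP(k+1)^+$. One cosmetic slip: the identity $H_i[(q-1)x_1] = c_i(q-1)^{\deg H_i}x_1^{\deg H_i}$ is not literally true (e.g.\ $p_2[(q-1)x_1] = (q^2-1)x_1^2$), but all you need is that a homogeneous $H_i$ evaluated at $(q-1)x_1$ is a scalar in $\mathbb{Q}(q,t)$ times $x_1^{\deg H_i}$, which is correct and suffices for the argument.
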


\begin{proof}
    Let $(f_m)_{m\geq 1}$ be a convergent sequence with limit $f \in \sP(k)^{+}$. We start by showing the sequence $(\widetilde{\omega}_m(f_m))_{m \geq 1}$ converges to an element of $\sP_{as}^{+}$. It follows directly by the definition of convergence that there exists some $M >k$ such that for all $i$ and $m$ with $m \geq M$ and $k+1 \leq i \leq m-1$, $T_if_m = f_m$. Therefore, for all $m \geq M$ 
    $$\widetilde{\omega}_m(f_m) = x_1T_1^{-1}\cdots T_{k}^{-1}f_m $$ 
    which clearly converges to $x_1T_1^{-1}\cdots T_{k}^{-1}f$. It follows then that the sequence of operators $(\widetilde{\omega}_m)_{m\geq 1}$ converges to an operator which we call $\widetilde{\omega}.$ By considering $f = x_1^{a_1}\cdots x_k^{a_k}F[X]$ with $F \in \Lambda$ we get the first formula 
    in the lemma statement above.

    Next we will show the sequence $(\omega^{-1}_m(\pi_m(f)))_{m \geq 1}$ converges.
    Expand $f$ as
    $$f = \sum_{i=1}^{N} c_ix^{\mu^{(i)}}F_i[X]$$
    for $c_i \in \mathbb{Q}(q,t)$, compositions $\mu^{(i)}$, and $F_i \in \Lambda$ where we may assume each composition $\mu^{(i)}$ has length $k$ so that for all $m \geq k$ 
    $$ \pi_m(f) = \sum_{i=1}^{N} c_i x^{\mu^{(i)}} F_i[x_1+\ldots + x_m].$$
    Applying $\omega_{m}^{-1}$ to $\pi_m(f)$ gives for $m \geq k$ 
    $$\omega_{m}^{-1}(\pi_m(f)) = \sum_{i=1}^{N} c_i x_2^{\mu^{(i)}_1}\cdots x_{k+1}^{\mu^{(i)}_k}F[qx_1+x_2+\ldots + x_m] $$
    so therefore we get 
    $$\lim_{m} \omega_{m}^{-1}(\pi_m(f)) = \sum_{i=1}^{N} c_i x_2^{\mu^{(i)}_1}\cdots x_{k+1}^{\mu^{(i)}_k}F[X+(q-1)x_1].$$
    Thus the sequence of operators $(\omega_{m}^{-1})_{m \geq 1}$ converges to an operator which we call $\omega^{*}$. Lastly, by applying this formula to $f = x_1^{a_1}\cdots x_k^{a_k}F[X]$ with $F \in \Lambda$ to see the second formula given in the lemma statement.
\end{proof}

In line with the above results in this section we will now give a partial generalization of the classical Knop-Sahi relation regarding the action of the $\omega$ operators on Macdonald polynomials.  

\begin{prop}
    For all compositions $\mu$ 
    $$t^{\#\{j:\mu_j \neq 0\}}\widetilde{\omega}( \widetilde{E}_{\mu}) = x_1 \omega^{*}(\widetilde{E}_{\mu}) = \widetilde{E}_{1*\mu}.$$
\end{prop}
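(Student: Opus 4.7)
The plan is to derive both equalities as stable limits of classical Knop-Sahi type relations in finite variables, using the convergence of the operator sequences $(\widetilde{\omega}_m)_{m\geq 1}$ and $(\omega_m^{-1})_{m \geq 1}$ established in Lemma \ref{pis converges}. The key input is the pair of classical identities in $\sP_N$: for any composition $\nu = (\nu_1,\ldots,\nu_N)$,
$$E_{(\nu_N+1,\nu_1,\ldots,\nu_{N-1})}(x_1,\ldots,x_N) = q^{\nu_N}X_1\omega_N^{-1}E_\nu = q^{\nu_N}t^{\#\{j:\nu_j\neq 0\}}\widetilde{\omega}_N E_\nu.$$
The first equality is the usual Knop-Sahi relation; I would verify it by comparing the leading monomials on each side (using the explicit action $\omega_N^{-1}f(x_1,\ldots,x_N) = f(x_2,\ldots,x_N,qx_1)$) and checking that both sides are $Y$-weight vectors with matching weights via the DAHA intertwining between $\omega_N^{-1}$ and the $Y_i$'s. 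For the second equality I would rewrite $\widetilde{\omega}_N = t^{-(N-1)}T_1\cdots T_{N-1}X_N$ using the DAHA relation $X_iT_i^{-1} = t^{-1}T_iX_{i+1}$ iteratively, and exploit the fact that $T_iE_\nu = E_\nu$ whenever $\nu_i = \nu_{i+1}$ (including when both are zero). This last fact follows by an intertwiner argument: when $\nu_i = \nu_{i+1}$, direct computation gives $\alpha_i = t\alpha_{i+1}$, so $\varphi_i^2E_\nu = (\alpha_i-t\alpha_{i+1})(\alpha_{i+1}-t\alpha_i)E_\nu = 0$; since $\varphi_iE_\nu$ would lie in a $Y$-weight space containing no Macdonald polynomial, $\varphi_iE_\nu = 0$, which forces $T_iE_\nu = E_\nu$.

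With both finite-variable identities in hand, I would specialize to $\nu = \mu*0^m$ in $N = n+m$ variables, where $n = \ell(\mu)$. Since $(\mu*0^m)_N = 0$ for $m\geq 1$ and the count of nonzero entries is preserved, $\#\{j:(\mu*0^m)_j\neq 0\} = \#\{j:\mu_j\neq 0\}$, so the identities read
$$E_{1*\mu*0^{m-1}} = X_1\omega_{n+m}^{-1}E_{\mu*0^m} = t^{\#\{j:\mu_j\neq 0\}}\widetilde{\omega}_{n+m}E_{\mu*0^m}.$$
Letting $m\to\infty$, the left side converges to $\widetilde{E}_{1*\mu}$ by Corollary/Definition \ref{convergence of macdonald}. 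By Lemma \ref{pis converges}, $(\omega_m^{-1})_{m\geq 1}$ and $(\widetilde{\omega}_m)_{m\geq 1}$ converge to the operators $\omega^*$ and $\widetilde{\omega}$; invoking the continuity statement (Proposition 6.21 of \cite{Ion_2022}) applied to the convergent sequence $(E_{\mu*0^m})_{m\geq 1}$ with limit $\widetilde{E}_\mu$, the middle and right sides converge to $X_1\omega^*(\widetilde{E}_\mu)$ and $t^{\#\{j:\mu_j\neq 0\}}\widetilde{\omega}(\widetilde{E}_\mu)$ respectively, producing the three-way equality.

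The main obstacle is the verification of the precise scalar $t^{\#\{j:\nu_j\neq 0\}}$ appearing in the identity relating $X_1\omega_N^{-1}$ and $\widetilde{\omega}_N$. The conventions of this paper (quadratic relation $(T_i-1)(T_i+t)=0$ and the $q\to q^{-1}$ convention relative to \cite{haglund2007combinatorial}) shift normalizations compared to several standard references, so one must carefully track how each nonzero entry of $\nu$ contributes exactly one factor of $t$ via the non-trivial $T_i^{-1}$ factors in $T_1^{-1}\cdots T_{N-1}^{-1}$ that survive after the trivial ones (those with $\nu_i = \nu_{i+1}$) have been collapsed. Once this finite-variable bookkeeping is settled, the passage to the limit is routine.
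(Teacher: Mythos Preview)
Your overall strategy coincides with the paper's: establish the finite-variable identity
\[
E_{1*\mu*0^{m-1}} = X_1\omega_{n+m}^{-1}E_{\mu*0^m} = t^{\#\{j:\mu_j\neq 0\}}\widetilde{\omega}_{n+m}E_{\mu*0^m}
\]
and then pass to the limit using Lemma \ref{pis converges}, Corollary \ref{convergence of macdonald}, and the continuity of limit operators. The first equality via classical Knop-Sahi and the limiting step are exactly as in the paper.

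Where you diverge is in justifying the scalar $t^{\#\{j:\mu_j\neq 0\}}$ linking $X_1\omega_{N}^{-1}$ and $\widetilde{\omega}_N$ on $E_\nu$, and here your sketch does not work as written. You propose to rewrite $\widetilde{\omega}_N = t^{-(N-1)}T_1\cdots T_{N-1}X_N$ and then collapse $T_i^{-1}$ factors using $T_iE_\nu = E_\nu$ for $\nu_i=\nu_{i+1}$. But in the form $X_1T_1^{-1}\cdots T_{N-1}^{-1}E_\nu$ only the rightmost block of $T_i^{-1}$'s (those hitting the tail of zeros) acts directly on $E_\nu$; collapsing them removes operators but introduces no powers of $t$, and in the rewritten form $t^{-(N-1)}T_1\cdots T_{N-1}X_N E_\nu$ the $T_i$'s act on $X_NE_\nu$, not on $E_\nu$, so the hypothesis $\nu_i=\nu_{i+1}$ is no longer applicable. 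There is no mechanism in your sketch that produces one factor of $t$ per nonzero $\nu_j$, nor any relation connecting $\omega_N^{-1}$ to $T_1^{-1}\cdots T_{N-1}^{-1}$ at all. (Also note your general three-term identity is only correct when $\nu_N=0$; for arbitrary $\nu$ the scalar is $t^{N-1}\alpha_\nu(N)$, not $t^{\#\{j:\nu_j\neq 0\}}$.)

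The paper's route sidesteps this entirely: from $(Y_{N}^{(N)})^{-1} = t^{N-1}\omega_N T_1^{-1}\cdots T_{N-1}^{-1}$ one gets the \emph{operator} identity $X_1\omega_N^{-1} = t^{N-1}\widetilde{\omega}_N Y_N^{(N)}$, so applying both sides to the eigenvector $E_{\mu*0^m}$ gives the scalar as $t^{n+m-1}\alpha_\mu^{(m)}(n+m)$, which equals $t^{\#\{j:\mu_j\neq 0\}}$ by the explicit formula in Lemma \ref{convergence of eigenvalues}. This is a one-line computation once the operator identity is observed, and it is what you should replace your ``bookkeeping'' paragraph with.
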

\begin{proof}
    Suppose $\ell(\mu) = n$. Recall that for all $m \geq 1$
    $$(Y^{(n+m)}_{n+m})^{-1} = t^{n+m-1} \omega_{n+m}T_1^{-1}\cdots T_{n+m-1}^{-1}.$$ Therefore, by recalling the eigenvalue notation in Lemma \ref{convergence of eigenvalues} we have 
    $$t^{n+m-1} \omega_{n+m}T_1^{-1}\cdots T_{n+m-1}^{-1}E_{\mu*0^m} = (Y^{(n+m)}_{n+m})^{-1} E_{\mu*0^m} = \alpha_{\mu}^{(m)}(n+m)^{-1}E_{\mu*0^m}$$
    so that 
    $$t^{n+m-1}\alpha_{\mu}^{(m)}(n+m) x_1 T_1^{-1}\cdots T_{n+m-1}^{-1}E_{\mu*0^m} = x_1 \omega_{n+m}^{-1}E_{\mu*0^m}.$$
    From Lemma \ref{convergence of eigenvalues} we see that 
    $$t^{n+m-1}\alpha_{\mu}^{(m)}(n+m) = t^{\#\{j:\mu_j \neq 0\}}$$
    which gives 
    $$t^{\#\{j:\mu_j \neq 0\}} x_1 T_1^{-1}\cdots T_{n+m-1}^{-1}E_{\mu*0^m} = t^{\#\{j:\mu_j \neq 0\}} \widetilde{\omega}_{n+m}(E_{\mu*0^m}) = x_1 \omega_{n+m}^{-1}E_{\mu*0^m}. $$
    From the classical Knop-Sahi relations (see \cite{haglund2007combinatorial}) applied to $E_{\mu*0^m}$ we get 
    $x_1 \omega_{n+m}^{-1}E_{\mu*0^m} = E_{1*\mu*0^{m-1}}.$
    Applying Corollary \ref{convergence of macdonald} and Lemma \ref{pis converges} as $m \rightarrow \infty$ now gives 
    $$t^{\#\{j:\mu_j \neq 0\}}\widetilde{\omega}(\widetilde{E}_{\mu}) = x_1 \omega^{*}(\widetilde{E}_{\mu}) = \widetilde{E}_{1*\mu}.$$
    
\end{proof}

\section{Constructing $\widetilde{E}_{(\mu|\lambda)}$-Diagonal Operators from Symmetric Functions}\label{one dim wt spaces section}

The main goal of the following section of this paper is to construct an operator on $\sP_{as}^{+}$ which is diagonal in the stable-limit Macdonald function basis, commutes with the limit Cherednik operators $\y_i$, but does not annihilate $\Lambda$. This operator will be constructed from a limit of operators arising from the action of $t^mY^{(m)}_1+ \ldots +t^m Y^{(m)}_m$ on $\sP_m^{+}$. After finding the eigenvalues of this new operator we will show that the addition of this operator to the algebra generated by the limit Cherednik operators has simple spectrum on $\sP_{as}^{+}$.

We begin with the following natural definition. 

\begin{defn}\label{defn 13}
    For $F \in \Lambda$ define the operator $\Psi_{F}^{(m)}: \sP_{m}^{+} \rightarrow \sP_{m}^{+}$ by
    \begin{equation}
    \Psi_{F}^{(m)} := F[t^m Y_1^{(m)}+ \ldots + t^m Y_m^{(m)}].
    \end{equation}
    Further, for a composition $\mu$ with $\ell(\mu) = n$ and $m \geq 0$ define the scalar $\kappa_{\mu}^{(m)}(q,t)$ as 
    $$\kappa_{\mu}^{(m)}(q,t):= \sum_{i=1}^{n+m} t^{n+m} \alpha_{\mu}^{(m)}(i).$$
\end{defn}

 Recall from Lemma \ref{convergence of eigenvalues} that $\alpha_{\mu}^{(m)}(i)$ is given by $Y_{i}^{(n+m)}E_{\mu*0^m} = \alpha_{\mu}^{(m)}(i)E_{\mu*0^m}$.

\begin{lem}\label{kappas converge}
    For all compositions $\mu$ the sequence $(\kappa_{\mu}^{(m)}(q,t))_{m\geq 0}$ converges to some $\kappa_{\mu}(q,t) \in \mathbb{Q}(q,t).$ Further, $\kappa_{\mu}(q,t) = \kappa_{\mu*0^k}(q,t)$ for all $k \geq 0$ and $\kappa_{\mu}(q,t) = \kappa_{s_i(\mu)}(q,t) $ for all $1\leq i \leq \ell(\mu) -1$.
\end{lem}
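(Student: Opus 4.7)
The plan is to unpack the sum defining $\kappa_{\mu}^{(m)}(q,t)$ using the piecewise formula for $t^{n+m}\alpha_{\mu}^{(m)}(i)$ already established in Lemma \ref{convergence of eigenvalues}, and split it into three blocks according to which case of that formula applies. Writing $n=\ell(\mu)$, these are: (a) indices $1\le i\le n$ with $\mu_i\neq 0$, (b) indices $1\le i\le n$ with $\mu_i=0$, and (c) indices $n<i\le n+m$. The first block contributes $\sum_{i:\,\mu_i\neq 0} q^{\mu_i}t^{n+1-\beta_{\mu}(i)}$, which is independent of $m$. The second block contributes $\sum_{i:\,\mu_i=0} t^{n+m}\alpha_{\mu}^{(0)}(i)$, which lies in $t^m\mathbb{Q}(q,t)$ and hence tends to $0$ in the $t$-adic topology. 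The third block is $t^{\#\{j:\mu_j\neq 0\}}\sum_{j=1}^{m} t^{j}$, a truncated geometric series with $t$-adic limit $t^{\#\{j:\mu_j\neq 0\}}\cdot\tfrac{t}{1-t}$. Summing the three contributions shows the sequence converges in $\mathbb{Q}(q,t)$ to
\[
\kappa_{\mu}(q,t)=\sum_{i:\,\mu_i\neq 0} q^{\mu_i}t^{n+1-\beta_{\mu}(i)} + \frac{t^{1+\#\{j:\mu_j\neq 0\}}}{1-t},
\]
and this closed form will drive the rest of the argument.

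For the stability $\kappa_{\mu}=\kappa_{\mu*0^k}$, the cleanest route is a reindexing: $E_{(\mu*0^k)*0^{m'}}=E_{\mu*0^{k+m'}}$ sits in the same space $\sP_{n+k+m'}^{+}$ as $E_{\mu*0^{m}}$ when $m=k+m'$, so $\kappa_{\mu*0^k}^{(m')}=\kappa_{\mu}^{(k+m')}$ for every $m'\ge 0$. Taking $m'\to\infty$ gives the claim (one can also read it off the explicit formula above, noting $\#\{j:(\mu*0^k)_j\neq 0\}=\#\{j:\mu_j\neq 0\}$ and $\beta_{\mu*0^k}(i)=\beta_{\mu}(i)+k$ for $i\le n$ with $\mu_i\neq 0$).

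For the symmetry $\kappa_{\mu}=\kappa_{s_i(\mu)}$ with $1\le i\le n-1$, I would in fact prove the stronger equality $\kappa_{\mu}^{(m)}=\kappa_{s_i(\mu)}^{(m)}$ for every $m\ge 0$. Since $\sum_{j=1}^{n+m}Y_j^{(n+m)}$ is a symmetric polynomial in the $Y_j$'s, Lemma \ref{sym x and y commute with T's} gives $[T_i,\sum_j Y_j^{(n+m)}]=0$, so the action of $\sum_j Y_j^{(n+m)}$ on $\sP_{n+m}^{+}$ is $\mathfrak{S}_{n+m}$-invariant on $Y$-weight spaces. Concretely, the intertwiner relation $\varphi_i Y_j=Y_{s_i(j)}\varphi_i$ shows that whenever $\varphi_i$ is applied to an $E_\nu$ lying in the $Y$-orbit of $s_i(\nu)$, the weight merely permutes; in particular the multisets of $Y^{(n+m)}$-eigenvalues of $E_{\mu*0^m}$ and $E_{s_i(\mu)*0^m}=E_{s_i(\mu*0^m)}$ coincide, so their sums agree. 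Multiplying by $t^{n+m}$ and taking $m\to\infty$ yields the desired equality of limits.

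The one genuine subtlety I anticipate is the $s_i$-invariance in the degenerate case where $\varphi_i$ may vanish (e.g.\ when $\mu_i=\mu_{i+1}$, or when the swap straddles reduced/non-reduced parts), so the intertwiner argument may need to be supplemented by a direct check on the explicit weight formula $\alpha_{\mu}(j)=q^{\mu_j}t^{1-\beta_{\mu}(j)}$. Fortunately this reduces to verifying $\beta_{s_i(\mu)}(i)=\beta_{\mu}(i+1)$ and $\beta_{s_i(\mu)}(i+1)=\beta_{\mu}(i)$, which is a routine case split on the sign of $\mu_i-\mu_{i+1}$; outside of positions $i,i+1$ the $\beta$ values are manifestly unchanged, so the multiset of eigenvalues is the same regardless of whether $\varphi_i$ is nonzero. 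This combinatorial bookkeeping is the main (modest) obstacle; everything else is an elementary $t$-adic limit calculation.
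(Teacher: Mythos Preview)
Your proposal is correct and follows essentially the same route as the paper: you split $\kappa_{\mu}^{(m)}$ into the same three blocks via Lemma~\ref{convergence of eigenvalues}, obtain the identical closed form $\kappa_{\mu}(q,t)=\sum_{\mu_i\neq 0} q^{\mu_i}t^{n+1-\beta_{\mu}(i)}+\frac{t^{1+\#\{j:\mu_j\neq 0\}}}{1-t}$, and deduce the stability and $s_i$-invariance from reindexing and the permutation-invariance of the $Y$-weight multiset. The paper compresses the last two claims into a single sentence citing Lemma~\ref{convergence of eigenvalues} and classical DAHA intertwiner theory, whereas you spell out the reindexing $\kappa_{\mu*0^k}^{(m')}=\kappa_{\mu}^{(k+m')}$ and the multiset argument explicitly (and correctly handle the degenerate $\varphi_i=0$ case via the $\beta$-values), which is a welcome addition of detail but not a different method.
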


\begin{proof}
    Using Lemma \ref{convergence of eigenvalues} we get the following:

\begin{align*}
    \kappa_{\mu}^{(m)}(q,t) &= \sum_{i=1}^{n+m}t^{n+m}\alpha_{\mu}^{(m)}(i) \\
    &= \sum_{i=1}^{n}t^{n+m}\alpha_{\mu}^{(m)}(i) + \sum_{i=n+1}^{n+m} t^{\#\{j:\mu_j \neq 0\}}t^{m+1-(i-n)} \\
    &= \sum_{i=1}^{n}t^n \alpha_{\mu}^{(0)}(i) t^{m \mathbbm{1}(\mu_i=0)} + t^{\#\{j:\mu_j \neq 0\}}\sum_{i=1}^{m}t^{m-i+1} \\
    &= \sum_{\mu_i \neq 0}t^n \alpha_{\mu}^{(0)}(i) + t^m \sum_{\mu_i =0} t^n \alpha_{\mu}^{(0)}(i) + t^{\#\{j:\mu_j \neq 0\}}\sum_{i=1}^{m}t^{i}.
\end{align*}

Therefore, 
\begin{equation}
\kappa_{\mu}(q,t) := \lim_{m} \kappa_{\mu}^{(m)}(q,t) = \left( \sum_{i:\mu_i \neq 0}t^n \alpha_{\mu}^{(0)}(i) \right) + \frac{t^{1+\#\{j:\mu_j \neq 0\}}}{1-t} \in \mathbb{Q}(q,t).
\end{equation}

The last statement regarding $\kappa_{\mu*0^k}(q,t)$ and $\kappa_{s_i(\mu)}(q,t)$ follows now directly from Lemma \ref{convergence of eigenvalues} and classical DAHA intertwiner theory.
\end{proof}

\begin{remark*}
    Recall from the proof of Lemma \ref{convergence of eigenvalues} that 
    $$t^n\alpha_{\mu}^{(0)} = q^{\mu_i}t^{n+1-\beta_{\mu}(i)}.$$ Applying this to the Lemma \ref{kappas converge} gives the combinatorial formula 
    $$\kappa_{\mu}(q,t) = \frac{t^{1+\#\{j:\mu_j \neq 0\}}}{1-t}+ \sum_{\mu_i \neq 0} q^{\mu_i}t^{n+1- \beta_{\mu}(i)}.$$ If we consider the partition $\lambda$ to have an infinite string of $0$'s attached to its tail then 
    $$\kappa_{\lambda}(q,t) = \sum_{i=1}^{\infty} q^{\lambda_i}t^{i}.$$ Notice that this is exactly equal to 
    $$\frac{t}{1-t}\left(1 - (1-t)(1-q)B_{\lambda}(q,t) \right)$$ where $B_{\lambda}(q,t)$ is the diagram generator of $\lambda$ in \cite{HMacDG}.
\end{remark*}

\begin{cor}
    Let $\lambda$ and $\nu$ be partitions. Then $\kappa_{\lambda}(q,t) = \kappa_{\nu}(q,t)$ if and only if $\lambda = \nu$.
\end{cor}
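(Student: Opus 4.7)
The plan is to leverage the formula highlighted in the preceding remark, namely
$$\kappa_\lambda(q,t) = \sum_{i \geq 1} q^{\lambda_i} t^{i},$$
where $\lambda$ is extended by an infinite string of zeros. The reverse implication ($\lambda = \nu \Rightarrow \kappa_\lambda = \kappa_\nu$) is immediate, so everything reduces to showing that the multi-set of pairs $\{(\lambda_i, i)\}$ can be recovered from $\kappa_\lambda$. I would do this by reorganizing the sum above as a polynomial in $q$ whose coefficients lie in $\mathbb{Q}(t)$, and then comparing these coefficients.

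Concretely, let $A_k(\lambda) := \#\{i : \lambda_i \geq k\}$ for $k \geq 1$, so $A_1(\lambda) = \ell(\lambda)$. Because $\lambda$ is weakly decreasing, the set $\{i : \lambda_i = k\}$ is the contiguous interval $\{A_{k+1}(\lambda)+1,\ldots,A_k(\lambda)\}$ for each $k \geq 1$, while the zero-positions form the tail $\{\ell(\lambda)+1,\ell(\lambda)+2,\ldots\}$. Summing the corresponding geometric series gives
$$\kappa_\lambda(q,t) \;=\; \frac{t^{\,\ell(\lambda)+1}}{1-t} \;+\; \sum_{k \geq 1} q^{k}\,\frac{t^{\,A_{k+1}(\lambda)+1} - t^{\,A_{k}(\lambda)+1}}{1-t}.$$
Since $\lambda_1 < \infty$, this is a polynomial in $q$ of degree at most $\lambda_1$ with coefficients in $\mathbb{Q}(t)$.

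Now suppose $\kappa_\lambda(q,t) = \kappa_\nu(q,t)$. Comparing coefficients of $q^{0}$ in $\mathbb{Q}(t)$ yields $t^{\ell(\lambda)+1} = t^{\ell(\nu)+1}$, hence $A_1(\lambda) = A_1(\nu)$. Inductively, if $A_k(\lambda) = A_k(\nu)$, then equating the $q^{k}$-coefficients produces $t^{A_{k+1}(\lambda)+1} = t^{A_{k+1}(\nu)+1}$, so $A_{k+1}(\lambda) = A_{k+1}(\nu)$. The multiplicities $m_k(\lambda) = A_k(\lambda) - A_{k+1}(\lambda)$ therefore agree for every $k \geq 1$, so $\lambda = \nu$.

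There is no real obstacle here: the only subtle point is to verify that $\kappa_\lambda(q,t)$ really is a polynomial in $q$ (not merely a formal series), which is automatic because $\lambda$ has only finitely many nonzero parts. This justifies the coefficient comparison and makes the argument completely elementary once the closed form from the remark is in hand.
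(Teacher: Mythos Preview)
Your proof is correct and rests on the same identity the paper invokes, $\kappa_\lambda(q,t)=\sum_{i\geq 1} q^{\lambda_i}t^i$; the paper simply says the result ``follows readily'' from this without further detail, and your argument is a valid way to fill that in. A marginally quicker route, and likely the one the paper has in mind, is to compare coefficients of $t^i$ rather than of $q^k$: the coefficient of $t^i$ in the $t$-adic expansion is $q^{\lambda_i}$, so $\kappa_\lambda=\kappa_\nu$ forces $q^{\lambda_i}=q^{\nu_i}$ and hence $\lambda_i=\nu_i$ for every $i$, avoiding the induction entirely.
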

\begin{proof}
    This follows readily from the identity 
    $$\kappa_{\lambda}(q,t) =  \sum_{i=1}^{\infty} q^{\lambda_i}t^{i}$$ given in the prior remark. 
\end{proof}

In this next result we will show that the sequence of operators $(\Psi_{p_1}^{(m)})_{m\geq 1}$ converges to a well defined map on $\sP_{as}^{+}$. As expected these operators are well-behaved on sequences of the form $\epsilon_{\ell(\mu)}^{(m)}(E_{\mu*\lambda*0^{m-(\ell(\mu)+\ell(\lambda))}}).$ In fact it is not hard to show that $(\Psi_{p_1}^{(m)})_{m\geq 1}$ converges on the former sequences. However, this is not a sufficient argument to show the convergence of the $(\Psi_{p_1}^{(m)})_{m\geq 1}$. In order to obtain a well-defined operator on $\sP_{as}^{+}$ from the sequence of operators $(\Psi_{p_1}^{(m)})_{m\geq 1}$ one needs to show that given an arbitrary convergent sequence $(f_m)_{m\geq 1}$ the corresponding sequence $(\Psi_{p_1}^{(m)}(f_m))_{m\geq 1}$ converges. Therefore, the difficulty in the following proof is to show that the $\Psi_{p_1}^{(m)}$ are well behaved in general.

\begin{thm}\label{psi 1 converges}
    The sequence of operators $(\Psi_{p_1}^{(m)})_{m\geq 1}$ converges to an operator $\Psi_{p_1}: \sP_{as}^{+}\rightarrow \sP_{as}^{+}$ which is diagonal in the $\widetilde{E}_{(\mu|\lambda)}$ basis with 
    $$\Psi_{p_1}(\widetilde{E}_{(\mu|\lambda)}) = \kappa_{\mu*\lambda}(q,t) \widetilde{E}_{(\mu|\lambda)}.$$
\end{thm}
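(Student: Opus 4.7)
The plan is to establish the theorem in three stages, working from easy to hard. First I would pin down the action of $\Psi_{p_1}^{(m)}$ on the non-symmetric Macdonald basis: since each $E_\nu \in \sP_m^+$ is a joint $Y^{(m)}$-eigenvector with weight $\alpha_\nu$, we have
$$\Psi_{p_1}^{(m)}(E_\nu) = \Bigl(t^m \sum_{i=1}^m \alpha_\nu(i)\Bigr) E_\nu,$$
and specializing to $\nu = \mu*\lambda*0^{m-\ell(\mu)-\ell(\lambda)}$ identifies the scalar as $\kappa_{\mu*\lambda}^{(m-\ell(\mu)-\ell(\lambda))}(q,t)$, which converges to $\kappa_{\mu*\lambda}(q,t)$ by Lemma \ref{kappas converge}.

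Second, I would observe that $Y_1^{(m)} + \cdots + Y_m^{(m)}$ is a symmetric polynomial in the $Y$'s, so by Lemma \ref{sym x and y commute with T's} it commutes with every $T_j$, and consequently $\Psi_{p_1}^{(m)}$ commutes with each partial Hecke idempotent $\epsilon_k^{(m)}$. Applied to the convergent sequence
$$g_m := \epsilon_{\ell(\mu)}^{(m)}\bigl(E_{\mu*\lambda*0^{m-\ell(\mu)-\ell(\lambda)}}\bigr),$$
which by Corollary \ref{alternative description of E's} converges to $\widetilde E_{(\mu|\lambda)}$, this commutation together with the eigenvalue from the first step gives
$$\Psi_{p_1}^{(m)}(g_m) = \epsilon_{\ell(\mu)}^{(m)}\bigl(\Psi_{p_1}^{(m)} E_{\mu*\lambda*0^{m-\ell(\mu)-\ell(\lambda)}}\bigr) = \kappa_{\mu*\lambda}^{(m-\ell(\mu)-\ell(\lambda))}(q,t)\, g_m,$$
which tends to $\kappa_{\mu*\lambda}(q,t)\,\widetilde E_{(\mu|\lambda)}$ as $m\to\infty$.

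The main obstacle is the third step: verifying operator convergence in the sense of Ion--Wu, i.e.\ that $(\Psi_{p_1}^{(m)}(\pi_m(f)))_{m\ge 1}$ converges for every $f \in \sP_{as}^+$. By $\mathbb{Q}(q,t)$-linearity of $\pi_m$ and $\Psi_{p_1}^{(m)}$, together with Theorem \ref{main theorem}, this reduces to convergence on each basis element $f = \widetilde E_{(\mu|\lambda)}$, and the canonical sequence appearing in the definition is $(\pi_m(\widetilde E_{(\mu|\lambda)}))$ rather than the cleaner $(g_m)$. The key bridge is that $\widetilde E_{(\mu|\lambda)} \in \sP(\ell(\mu))^+$ is already symmetric in the tail variables $x_{\ell(\mu)+1}, x_{\ell(\mu)+2}, \ldots$, so $\epsilon_{\ell(\mu)}^{(m)} \pi_m(\widetilde E_{(\mu|\lambda)}) = \pi_m(\widetilde E_{(\mu|\lambda)})$. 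Combining this with a term-by-term comparison to $g_m$ via the explicit HHL-type formulas of Corollary \ref{formula for stable macdonald} and Corollary \ref{convergence of macdonald}, where the only discrepancy comes from replacing factors $\frac{1-t}{1-q^{-(\mathleg(u)+1)}t^{a(u)+m+1}}$ by $1-t$ in the row-$1$ contributions and is therefore $t$-adically of order $t^{m+1}$, one shows that $\pi_m(\widetilde E_{(\mu|\lambda)}) - g_m$ vanishes $t$-adically fast enough that $\Psi_{p_1}^{(m)}$ applied to it still tends to zero. The limit of $(\Psi_{p_1}^{(m)}(\pi_m(\widetilde E_{(\mu|\lambda)})))$ therefore coincides with $\lim_m \Psi_{p_1}^{(m)}(g_m) = \kappa_{\mu*\lambda}(q,t)\,\widetilde E_{(\mu|\lambda)}$, yielding simultaneously the convergence of the operator sequence and the claimed spectral formula.
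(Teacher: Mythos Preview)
Your first two steps are correct and match the paper's own computation of the eigenvalues: the commutation of $\Psi_{p_1}^{(m)}$ with $\epsilon_{\ell(\mu)}^{(m)}$ via Lemma~\ref{sym x and y commute with T's}, together with Lemma~\ref{kappas converge} and Corollary~\ref{alternative description of E's}, does give $\Psi_{p_1}^{(m)}(g_m)\to\kappa_{\mu*\lambda}(q,t)\,\widetilde E_{(\mu|\lambda)}$.

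The gap is in your third step. You correctly reduce operator convergence to checking $(\Psi_{p_1}^{(m)}(\pi_m(f)))_m$ on basis elements, but then you try to transfer convergence from the sequence $(g_m)$ to the canonical sequence $(\pi_m(\widetilde E_{(\mu|\lambda)}))$ by arguing that $\Psi_{p_1}^{(m)}$ applied to their difference tends to zero. This step is circular. Even granting that $\pi_m(\widetilde E_{(\mu|\lambda)})-g_m$ is a finite sum whose coefficients are $t$-adically of order $t^{m+1}$, concluding that $\Psi_{p_1}^{(m)}$ of this difference tends to zero would require either a uniform bound on $\Psi_{p_1}^{(m)}$ (which you have not established) or Proposition~6.21 of Ion--Wu---and the latter presupposes that the operator sequence already converges. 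The paper itself flags exactly this issue in the paragraph preceding the proof: knowing the action on the special sequences $g_m$ is \emph{not} sufficient; one must show $(\Psi_{p_1}^{(m)})$ is well behaved on arbitrary convergent sequences, in particular on $(\pi_m(f))$.

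The paper's route around this is different and more direct: rather than working in the $\widetilde E_{(\mu|\lambda)}$ basis, it checks convergence on the spanning set $T_\sigma x^\lambda F[X]$, uses $T$-commutation to reduce to $x^\lambda F[x_1+\cdots+x_m]$, and then splits $t^m(Y_1^{(m)}+\cdots+Y_m^{(m)})$ explicitly into a head piece $\sum_{i\le k}$ (rewritten via $\widetilde Y_i^{(m)}$, whose convergence is Theorem~\ref{deformed Y's converge}) and a tail piece $\sum_{i>k}$ (rewritten as a telescoping Hecke expression times an $\epsilon_k^{(m)}$-symmetrized term, converging by Corollary~\ref{partial idempotents converge}). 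Only after convergence is secured does the paper invoke Proposition~6.21 with the sequence $(g_m)$ to read off the eigenvalues. Your argument would be rescued by supplying an independent proof of convergence on $\pi_m(f)$; without it, the bootstrap from $(g_m)$ does not close.
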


\begin{proof}
    Notice that every element of $\sP_{as}^{+}$ is a finite $\mathbb{Q}(q,t)$-linear combination of terms of the form $T_{\sigma}x^{\lambda}F[X]$ where $\sigma$ is a permutation, $\lambda$ is a partition, and $F\in \Lambda$. Therefore, to show that the sequence of operators $(\Psi_{p_1}^{(m)})_{m\geq 1}$ converges it suffices using Remark \ref{useful defn of convergence} to show that sequences of the form 
    $$(\Psi_{p_1}^{(m)}(T_{\sigma}x^{\lambda}F[x_1+ \ldots +x_m]))_{m \geq 1} $$ converge. For $m$ sufficiently large, $T_{\sigma}$ commutes with $\Psi_{p_1}^{(m)} = t^m(Y_{1}^{(m)}+ \ldots + Y_{m}^{(m)})$ so it suffices to consider only sequences of the form $$(\Psi_{p_1}^{(m)}(x^{\lambda}F[x_1+ \ldots +x_m])) _{m \geq 1} .$$ Let $\lambda$ be a partition, $k := \ell(\lambda)$, $F\in \Lambda$, and take $m > k$. Recall that $\widetilde{Y}_1^{(m)}X_1 = t^mY_{1}^{(m)}X_1$ (\ref{defn of deformed y's}) from which it follows directly that $\widetilde{Y}_i^{(m)}X_i = t^mY_{i}^{(m)}X_i$ for all $1 \leq i \leq m$. Then for all $1\leq i \leq k$ we have that since $\lambda_i \neq 0$, 
    $$t^mY_i^{(m)}(x^{\lambda}F[x_1+\ldots+x_m]) = \widetilde{Y}_{i}^{(m)}(x^{\lambda}F[x_1+\ldots+x_m]).$$
    Therefore, 
    $$t^m(Y_1^{(m)}+ \ldots +Y_k^{(m)})(x^{\lambda}F[x_1+ \ldots +x_m]) = (\widetilde{Y}_{1}^{(m)}+ \ldots + \widetilde{Y}_{k}^{(m)})(x^{\lambda}F[x_1+ \ldots +x_m]).$$
    Now since $x^{\lambda}F[x_1+\ldots+x_m]$ is symmetric in the variables $\{k+1,\dots,m\}$ we see that 
    
    \begin{align*}
        &t^m(Y_{k+1}^{(m)}+ \ldots + Y_{m}^{(m)})(x^{\lambda}F[x_1+\ldots+x_m]) \\
        &= (t^{m-k}T_{k}\cdots T_1 \omega_m^{-1}T_{m-1}^{-1}\cdots T_{k+1}^{-1} + t^{m-k-1}T_{k+1}\cdots T_1 \omega^{-1}_{m}T_{m-1}^{-1}\cdots T_{k+2}^{-1} + \ldots+ tT_{m-1}\cdots T_1 \omega_{m}^{-1})(x^{\lambda}F[x_1+\ldots+x_m]) \\
        &= (t^{m-k}T_{k}\cdots T_1 + t^{m-k-1}T_{k+1}\cdots T_1 +\ldots+ tT_{m-1}\cdots T_1)\omega_m^{-1}(x^{\lambda}F[x_1+\ldots+x_m]) \\
        &= (t^{m-k}T_{k}\cdots T_1 + t^{m-k-1}T_{k+1}\cdots T_1 +\ldots+ tT_{m-1}\cdots T_1)(x_2^{\lambda_1}\cdots x_{k+1}^{\lambda_{k}}F[qx_1+x_2+ \ldots + x_m])\\
        &= (t^{m-k}+t^{m-k-1}T_{k+1}+\ldots+ tT_{m-1}\cdots T_{k+1}) \left( T_{k}\cdots T_1x_2^{\lambda_1}\cdots x_{k+1}^{\lambda_{k}}F[qx_1+x_2+ \ldots + x_m] \right).
    \end{align*}
Notice that since $T_{k}\cdots T_1x_2^{\lambda_1}\cdots x_{k+1}^{\lambda_{k}}F[qx_1+x_2+ \ldots + x_m]$ is symmetric in the variables $\{k+2,\ldots,m\}$ 
$$\epsilon_{k+1}^{(m)}(T_{k}\cdots T_1x_2^{\lambda_1}\cdots x_{k+1}^{\lambda_{k}}F[qx_1+x_2+ \ldots + x_m]) = T_{k}\cdots T_1x_2^{\lambda_1}\cdots x_{k+1}^{\lambda_{k}}F[qx_1+x_2+ \ldots + x_m].$$
Therefore, 
\begin{align*}
    &t^m(Y_{k+1}^{(m)}+ \ldots + Y_{m}^{(m)})(x^{\lambda}F[x_1+\ldots+x_m]) \\
    &= (t^{m-k}+\ldots+ tT_{m-1}\cdots T_{k+1})\epsilon_{k+1}^{(m)}(T_{k}\cdots T_1x_2^{\lambda_1}\cdots x_{k+1}^{\lambda_{k}}F[qx_1+x_2+ \ldots + x_m]) \\
    &= t(t^{m-k-1}+ \ldots + 1)\epsilon_{k}^{(m)}(T_{k}\cdots T_1x_2^{\lambda_1}\cdots x_{k+1}^{\lambda_{k}}F[qx_1+x_2+ \ldots + x_m])
\end{align*}
where the last equality follows from 
$$\left( \frac{t^{m-k-1}+ t^{m-k-2}T_{k+1}+\ldots+ T_{m-1}\cdots T_{k+1}}{t^{m-k-1}+ t^{m-k-2}+\ldots + 1} \right) \epsilon_{k+1}^{(m)} = \epsilon_{k}^{(m)}.$$ 
Putting it all together we see that 
\begin{align*}
    &\Psi_{p_1}^{(m)}(x^{\lambda}F[x_1+ \ldots +x_m])\\
    &= t^m(Y_1^{(m)}+\ldots+Y_m^{(m)})(x^{\lambda}F[x_1+ \ldots +x_m]) \\
    &= t^m(Y_1^{(m)}+\ldots+Y_k^{(m)})(x^{\lambda}F[x_1+ \ldots +x_m]) + t^m(Y_{k+1}^{(m)}+\ldots+Y_m^{(m)})(x^{\lambda}F[x_1+ \ldots +x_m]) \\
    &= (\widetilde{Y}_{1}^{(m)}+ \ldots + \widetilde{Y}_{k}^{(m)})(x^{\lambda}F[x_1+ \ldots +x_m]) + t(t^{m-k-1}+ \ldots + 1)\epsilon_{k}^{(m)}(T_{k}\cdots T_1x_2^{\lambda_1}\cdots x_{k+1}^{\lambda_{k}}F[qx_1+x_2+ \ldots + x_m])
\end{align*}

which by Theorem \ref{deformed Y's converge} and Corollary \ref{partial idempotents converge} converges to 
$$(\y_1+ \ldots + \y_k)(x^{\lambda}F[X]) + \frac{t}{1-t}\epsilon_{k}(T_k\cdots T_1x_2^{\lambda_1}\cdots x_{k+1}^{\lambda_k}F[X+(q-1)x_1]) .$$ Therefore, the limit operator $\Psi_{p_1}:= \lim_{m} \Psi_{p_1}^{(m)}$ is well defined. 

We will now show that the $\widetilde{E}_{(\mu|\lambda)}$ are weight vectors of $\Psi_{p_1}$ and compute their corresponding weight values. Let $\mu \in \Compred$ and $\lambda \in \Par$. By Corollary \ref{alternative description of E's} we have that 
$$ \widetilde{E}_{(\mu|\lambda)} = \lim_{m} \epsilon_{\ell(\mu)}^{(m)}(E_{\mu*\lambda*0^{m-(\ell(\mu)+\ell(\lambda))}}). $$
Therefore, by Proposition 6.21 from \cite{Ion_2022}, Lemma \ref{kappas converge}, and Lemma \ref{sym x and y commute with T's} it follows that 
\begin{align*}
   & \Psi_{p_1}(\widetilde{E}_{(\mu|\lambda)})\\
   &= \lim_{m} \Psi_{p_1}^{(m)}(\epsilon_{\ell(\mu)}^{(m)}(E_{\mu*\lambda*0^{m-(\ell(\mu)+\ell(\lambda))}}))\\
   &= \lim_{m} t^m(Y_1^{(m)}+\ldots+Y_m^{(m)})\epsilon_{\ell(\mu)}^{(m)}(E_{\mu*\lambda*0^{m-(\ell(\mu)+\ell(\lambda))}})\\
   &= \lim_{m} \epsilon_{\ell(\mu)}^{(m)}( t^m(Y_1^{(m)}+\ldots+Y_m^{(m)})E_{\mu*\lambda*0^{m-(\ell(\mu)+\ell(\lambda))}} )\\
   &= \lim_{m} \kappa_{\mu*\lambda}^{(m-(\ell(\mu)+\ell(\lambda)))}(q,t)\epsilon_{\ell(\mu)}^{(m)}(E_{\mu*\lambda*0^{m-(\ell(\mu)+\ell(\lambda))}})\\
   &= \kappa_{\mu*\lambda}(q,t)\widetilde{E}_{(\mu|\lambda)}. \\
\end{align*} 
\end{proof}

\begin{remark}\label{remark about delta'}
From the proof of Theorem \ref{psi 1 converges} we see that in particular, for partitions $\lambda$ we have that 
$$\Psi_{p_1}(\mathcal{A}_{\lambda}[X]) = \frac{t}{1-t}(1-(1-t)(1-q)B_{\lambda}(q,t))\mathcal{A}_{\lambda}[X] .$$ We saw that in Corollary \ref{alternative description of E's} $\mathcal{A}_{\lambda}[X] = (1-t)^{\ell(\lambda)}v_{\lambda}(t)P_{\lambda}[X;q^{-1},t]$ so,  following the argument of Haiman in \cite{HMacDG}, the operator $t^{-1}(1-t)\Psi_{p_1}$ is up to a change of variables equal to $\Delta'$. Therefore, we can view $t^{-1}(1-t)\Psi_{p_1}$ in a certain sense (after changing variables) as extending the operator $\Delta'$ from $\Lambda$ to $\sP_{as}^{+}$. Further, Theorem \ref{psi 1 converges} does not follow immediately from the work of Ion and Wu in \cite{Ion_2022} and in particular, 
$$ \Psi_{p_1} \neq \y_1+\y_2+\dots$$
although the latter operator is certainly well defined in a weak sense as a diagonal operator in the $\widetilde{E}_{(\mu|\lambda)}$ basis. The easiest way to see this is to note that $\y_1+\y_2+\dots$ will annihilate $\Lambda$ whereas $\Psi_{p_1}$ acting on the basis $\mathcal{A}_{\lambda}$ of $\Lambda$ has nonzero eigenvalues $\kappa_{\lambda}(q,t) \neq 0$.
\end{remark}

\begin{thm}[Second Main Theorem]\label{second main theorem}
    Let $\widetilde{Y}$ denote the $\mathbb{Q}(q,t)$-subalgebra of $End_{\mathbb{Q}(q,t)}(\sP_{as}^{+})$ generated by $\Psi_{p_1}$ and $\y_i$ for $i \geq 1$. $\sP_{as}^{+}$ has a basis of $\widetilde{Y}$-weight vectors and every $\widetilde{Y}$-weight space of $\sP_{as}^{+}$ is 1-dimensional.
\end{thm}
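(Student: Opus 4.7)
The plan is to harvest Theorems \ref{main theorem} and \ref{psi 1 converges} for the existence of a basis, and to use the injectivity of $\kappa$ on partitions to upgrade the $\y$-spectrum to a simple $\widetilde{Y}$-spectrum. For the first assertion: Theorem \ref{main theorem} produces the $\y$-weight basis $\{\widetilde{E}_{(\mu|\lambda)} : \mu \in \Compred,\, \lambda \in \Par\}$, and Theorem \ref{psi 1 converges} shows each $\widetilde{E}_{(\mu|\lambda)}$ is a $\Psi_{p_1}$-eigenvector with eigenvalue $\kappa_{\mu*\lambda}(q,t)$. Since these operators are simultaneously diagonal in a basis of $\sP_{as}^{+}$, they commute, so $\widetilde{Y}$ is commutative and the $\widetilde{E}_{(\mu|\lambda)}$ are simultaneous eigenvectors for the generators, hence $\widetilde{Y}$-weight vectors.

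The heart of the statement is simplicity of the weight spaces, for which I would show that the map $(\mu, \lambda) \mapsto \widetilde{Y}\text{-weight of }\widetilde{E}_{(\mu|\lambda)}$ is injective. Suppose $\widetilde{E}_{(\mu^{(1)}|\lambda^{(1)})}$ and $\widetilde{E}_{(\mu^{(2)}|\lambda^{(2)})}$ share a $\widetilde{Y}$-weight. First, matching $\y$-weights and invoking the formula in Corollary \ref{stable macdonald are weight vectors} together with Lemma \ref{convergence of eigenvalues} (exactly as in the proof of Theorem \ref{main theorem}) forces $\mu^{(1)} = \mu^{(2)} =: \mu$. The remaining task is to recover $\lambda^{(1)} = \lambda^{(2)}$, and this is where $\Psi_{p_1}$ earns its keep.

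Equating $\Psi_{p_1}$-eigenvalues gives $\kappa_{\mu*\lambda^{(1)}}(q,t) = \kappa_{\mu*\lambda^{(2)}}(q,t)$. By Lemma \ref{kappas converge}, $\kappa$ is invariant under appending zeros and under adjacent transpositions, so each side equals $\kappa_{\sort(\mu*\lambda^{(j)})}(q,t)$. The corollary of Lemma \ref{kappas converge} states that $\kappa$ is injective on partitions, yielding $\sort(\mu*\lambda^{(1)}) = \sort(\mu*\lambda^{(2)})$ as partitions. Removing the (fixed) multiset of nonzero parts of $\mu$ from each side leaves the multiset of parts of $\lambda^{(j)}$; since the $\lambda^{(j)}$ are already in weakly decreasing order, this forces $\lambda^{(1)} = \lambda^{(2)}$, completing the proof.

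There is no genuine obstacle here: the construction of $\Psi_{p_1}$ in Theorem \ref{psi 1 converges} and the combinatorial identity $\kappa_\nu(q,t) = \sum_{i\geq 1} q^{\nu_i}t^i$ in the remark after Lemma \ref{kappas converge} were set up precisely so that $\Psi_{p_1}$ resolves the ambiguity left by $\y$ on pairs $(\mu, \lambda^{(1)})$ and $(\mu, \lambda^{(2)})$. The theorem is thus essentially a bookkeeping consequence of the work already done.
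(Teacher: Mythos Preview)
Your proposal is correct and follows essentially the same approach as the paper: both use Theorems \ref{main theorem} and \ref{psi 1 converges} to produce the simultaneous eigenbasis and commutativity of $\widetilde{Y}$, reduce to $\mu^{(1)}=\mu^{(2)}$ via the $\y$-weight, and then invoke the injectivity of $\kappa$ on partitions (the corollary to Lemma \ref{kappas converge}) together with the sort/zero-invariance of $\kappa$ to conclude. The only cosmetic difference is that you argue the contrapositive (equal $\Psi_{p_1}$-eigenvalues force $\lambda^{(1)}=\lambda^{(2)}$) and spell out the multiset-removal step, whereas the paper phrases it as $\lambda^{(1)}\neq\lambda^{(2)}$ implies distinct $\kappa$-values.
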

\begin{proof}
Since $\Psi_{p_1}$ is diagonal in the $\widetilde{E}_{(\mu|\lambda)}$ basis, see Theorem \ref{main theorem}, it commutes with each $\y_i$. Therefore, $\widetilde{Y}$ is a commutative algebra of operators on $\sP_{as}^{+}$ so it makes sense to ask about its weights in $\sP_{as}^{+}$. To show that the $\widetilde{Y}$-weight spaces of $\sP_{as}^{+}$ are 1-dimensional it suffices to show that if $(\mu^{(1)}|\lambda^{(1)}) \neq (\mu^{(2)}|\lambda^{(2)})$ for $\mu^{(1)},\mu^{(2)} \in \Compred$ and $\lambda^{(1)},\lambda^{(2)} \in \Par$ with $\widetilde{E}_{(\mu^{(1)}|\lambda^{(1)})}$ and $\widetilde{E}_{(\mu^{(2)}|\lambda^{(2)})}$ having the same $\y$-weight then the $\Psi_{p_1}$ eigenvalues for $\widetilde{E}_{(\mu^{(1)}|\lambda^{(1)})}$ and $\widetilde{E}_{(\mu^{(2)}|\lambda^{(2)})}$ are distinct. Necessarily, from the proof of Theorem \ref{main theorem}, if $\widetilde{E}_{(\mu^{(1)}|\lambda^{(1)})}$ and $\widetilde{E}_{(\mu^{(2)}|\lambda^{(2)})}$ have the same $\y$-weight then $\mu^{(1)} = \mu^{(2)} = \mu$. Since $(\mu|\lambda^{(1)}) \neq (\mu|\lambda^{(2)})$ it follows that $\lambda^{(1)} \neq \lambda^{(2)}$ so that $\sort(\mu*\lambda^{(1)}) \neq \sort(\mu*\lambda^{(2)} )$. From Lemma \ref{kappas converge} we then know that $\kappa_{\mu*\lambda^{(1)}} \neq \kappa_{\mu*\lambda^{(2)}}$ so lastly by Theorem \ref{psi 1 converges} we see that the $\Psi_{p_1}$ eigenvalues for $\widetilde{E}_{(\mu|\lambda^{(1)})}$ and $\widetilde{E}_{(\mu|\lambda^{(2)})}$ are distinct. Hence, the $\widetilde{Y}$-weight spaces of $\sP_{as}^{+}$ are 1-dimensional.

\end{proof}

Theorem \ref{psi 1 converges} motivates the following definition.

\begin{defn}
    For $F \in \Lambda$ let $\Psi_{F}: \sP_{as}^{+} \rightarrow \sP_{as}^{+}$ be the diagonal operator in $End_{\mathbb{Q}(q,t)}(\sP_{as}^{+})$ in the $\{\widetilde{E}_{(\mu|\lambda)}: \mu \in \Compred, \lambda \in \Par\}$ basis given by 
    $$\Psi_{F}(\widetilde{E}_{(\mu|\lambda)}) := F[\kappa_{\mu*\lambda}(q,t)]\widetilde{E}_{(\mu|\lambda)}.$$
\end{defn}

Notice that by construction every operator $\Psi_F$ commutes with the image of every $\y_i \in End_{\mathbb{Q}(q,t)}(\sP_{as}^{+})$ since from Corollary \ref{main theorem} we know that the $\widetilde{E}_{(\mu|\lambda)}$ are a basis of $\sP_{as}^{+}$.

\begin{conjecture}\label{conjecture}
For all $F \in \Lambda$ we have that 
$$\Psi_{F} = \lim_{m} \Psi_{F}^{(m)}.$$
\end{conjecture}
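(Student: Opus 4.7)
The plan is to reduce the conjecture to the case of power sums via the algebra structure, and then adapt the argument of Theorem \ref{psi 1 converges}. For each fixed $m$, the assignment $F \mapsto \Psi_F^{(m)}$ is a $\mathbb{Q}(q,t)$-algebra homomorphism $\Lambda \to \operatorname{End}_{\mathbb{Q}(q,t)}(\sP_m^+)$, since plethystic evaluation at the commuting alphabet $t^mY_1^{(m)}+\cdots+t^mY_m^{(m)}$ is multiplicative. Likewise $F \mapsto \Psi_F$ is an algebra homomorphism into $\operatorname{End}_{\mathbb{Q}(q,t)}(\sP_{as}^+)$, because its eigenvalues $F[\kappa_{\mu*\lambda}]$ on the basis $\{\widetilde{E}_{(\mu|\lambda)}\}$ depend multiplicatively on $F$. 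Moreover, operator convergence in the Ion--Wu sense is preserved under products: given $A_m \to A$ and $B_m \to B$, for $f \in \sP_{as}^+$ set $g_m = B_m(\pi_m f)$; then $(g_m)$ converges to $Bf$ by hypothesis, and applying Proposition 6.21 of Ion--Wu to $A = \lim_m A_m$ and the sequence $(g_m)$ yields $A_mB_m(\pi_m f) = A_m(g_m) \to A(Bf) = AB(f)$. Combined with the evident additivity of limits, this shows that if $\Psi_{p_k}^{(m)} \to \Psi_{p_k}$ for every $k \geq 1$, then $\Psi_F^{(m)} \to \Psi_F$ for every $F \in \Lambda = \mathbb{Q}(q,t)[p_1,p_2,\ldots]$.

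For each fixed $k \geq 1$, I would adapt the proof of Theorem \ref{psi 1 converges}. By Remark \ref{useful defn of convergence} it suffices to verify convergence of $\Psi_{p_k}^{(m)}(x^\lambda F[x_1+\cdots+x_m])$ for each partition $\lambda$ of length $\ell$ and each $F \in \Lambda$. The natural decomposition is $\Psi_{p_k}^{(m)} = t^{mk}\sum_{i \leq \ell}(Y_i^{(m)})^k + t^{mk}\sum_{i > \ell}(Y_i^{(m)})^k$, treated separately. For the head sum: using the recursion $\widetilde{Y}_i^{(m)} = t^{-(i-1)}T_{i-1}\cdots T_1\widetilde{Y}_1^{(m)}T_1\cdots T_{i-1}$ together with $T_jX_j = tX_{j+1}T_j^{-1}$, one checks inductively that $\widetilde{Y}_i^{(m)}(\sP_m^+) \subset x_i\sP_m^+$; combined with the identity $\widetilde{Y}_i^{(m)}X_i = t^mY_i^{(m)}X_i$, this implies $(t^mY_i^{(m)})^k = (\widetilde{Y}_i^{(m)})^k$ on $x_i\sP_m^+$ for every $k$. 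Since $\lambda_i \geq 1$ for $i \leq \ell$, the head sum applied to $x^\lambda F[x_1+\cdots+x_m]$ equals $\sum_{i=1}^\ell(\widetilde{Y}_i^{(m)})^k(x^\lambda F[x_1+\cdots+x_m])$, which converges to $\sum_{i=1}^\ell \y_i^k(x^\lambda F[X])$ by Theorem \ref{deformed Y's converge} together with the convergence-of-products argument above.

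For the tail sum, substitute the explicit expression $t^mY_i^{(m)} = t^{m-i+1}T_{i-1}\cdots T_1\omega_m^{-1}T_{m-1}^{-1}\cdots T_i^{-1}$ iterated $k$ times. Using that $T_j^{-1}$ acts as the identity on polynomials symmetric in $x_j, x_{j+1}$, and tracking how the $\mathfrak{S}_{\{\ell+1,\ldots,m\}}$-symmetry of $x^\lambda F[x_1+\cdots+x_m]$ shifts under successive applications of the cyclic $\omega_m^{-1}$, the tail should reduce to an expression involving $k$-th powers of $\omega_m^{-1}$ composed with partial trivial idempotents $\epsilon_j^{(m)}$ for appropriate $j$, whose limits are provided by Lemma \ref{pis converges} and Corollary \ref{partial idempotents converge}. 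The hard part is precisely this tail analysis for $k > 1$: while the $k = 1$ case collapses cleanly via a single scalar identity between consecutive partial idempotents, for general $k$ the $k$ successive $\omega_m^{-1}$'s interact with cascading $T$-braids and with a symmetry subgroup that shifts by one index at each application, in a combinatorially intricate way that must be carefully unwound. Once tail convergence is established, the combined head+tail limit can be identified on each $\widetilde{E}_{(\mu|\lambda)}$ with $p_k[\kappa_{\mu*\lambda}]$ by the same diagonal eigenvalue calculation used at the end of the proof of Theorem \ref{psi 1 converges}, yielding $\lim_m\Psi_{p_k}^{(m)} = \Psi_{p_k}$ and, by the reduction in the first paragraph, the full conjecture.
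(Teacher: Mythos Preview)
This statement is an open \emph{conjecture} in the paper, not a theorem; the paper gives no proof of it. In Remark~\ref{elliptic Hall algebra} the author explicitly notes that the case $F=p_1$ is Theorem~\ref{psi 1 converges}, that the reduction to showing convergence of $(\Psi_{p_r}^{(m)})_{m\geq 1}$ for each $r\geq 2$ would suffice, and that this remains unresolved. So there is no ``paper's proof'' to compare against; your proposal is an attempted proof of an open problem.

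Your multiplicativity reduction in the first paragraph is correct and matches what the paper observes. However, your head-sum argument contains a concrete error. The identity you invoke, $T_jX_j = tX_{j+1}T_j^{-1}$, is false in this Hecke algebra: from the defining relation $T_j^{-1}X_jT_j^{-1}=t^{-1}X_{j+1}$ one obtains $T_jX_{j+1}=tX_jT_j^{-1}$ (indices reversed) and $T_jX_j = X_{j+1}T_j + (1-t)X_j$. Consequently $T_1(x_1\sP_m^{+}) \not\subset x_2\sP_m^{+}$, and your claim that $\widetilde{Y}_i^{(m)}(\sP_m^{+}) \subset x_i\sP_m^{+}$ fails for $i\geq 2$. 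More to the point, the iteration you need --- that $(t^mY_i^{(m)})^k=(\widetilde{Y}_i^{(m)})^k$ on $x_i\sP_m^{+}$ --- would require $Y_i^{(m)}$ to preserve $x_i\sP_m^{+}$, which is also false for $i\geq 2$ (e.g.\ $Y_2^{(2)}(x_2)=qt^{-1}(x_2+(1-t)x_1)\notin x_2\sP_2^{+}$). So the head sum does not reduce to powers of the deformed operators in the way you describe.

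For the tail sum you explicitly acknowledge that the $k>1$ case is ``combinatorially intricate'' and leave it unworked; this is exactly the obstruction the paper identifies as open. Your outline does not supply the missing idea here, so as written the proposal does not constitute a proof.
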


\begin{remark}\label{elliptic Hall algebra}
 Trivially, this conjecture holds for $F = 1 \in \Lambda$ and Theorem \ref{psi 1 converges} shows that this conjecture holds for $F = p_1$. Thus we see that the conjecture holds for $F \in \mathbb{Q}(q,t)[p_1]$. It is easy to extend part of the argument in the proof of Theorem \ref{psi 1 converges} to show that 
$$\lim_{m} \Psi_{F}^{(m)}(\epsilon_{\ell(\mu)}^{(m)}(E_{\mu*\lambda*0^{m-(\ell(\mu)+\ell(\lambda))}})) = F[\kappa_{\mu*\lambda}(q,t)] \widetilde{E}_{(\mu|\lambda)} = \Psi_{F}(\widetilde{E}_{(\mu|\lambda)}).$$
However, this is not sufficient to prove the conjecture. Similarly, to the proof of Theorem \ref{psi 1 converges} one needs to know that the sequence of operators $(\Psi_{F}^{(m)})_{m \geq 1}$ is well behaved on arbitrary convergent sequences in order to prove convergence to an operator in $ End_{\mathbb{Q}(q,t)}(\sP_{as}^{+})$. It would be sufficient to show that for every $r \geq 2$ the sequence of operators $(\Psi_{p_r}^{(m)})_{m\geq 1}$ converges since if this sequence converges its limit operator will necessarily agree with $\Psi_{p_r}$ on the $\widetilde{E}_{(\mu|\lambda)}$ basis.

This conjecture would imply the existence of an action of the elliptic Hall algebra \cite{BS} \cite{SV} on the space of almost symmetric functions. In independent work, according to private communications, Dongyu Wu has constructed an elliptic Hall algebra on $\sP_{as}^{+}$. Further, in Wu's action $P_{(0,1)}$ acts identically to $\Psi_{p_1}$ up to a scalar.
\end{remark}

\printbibliography


\end{document}